 \newcommand{\Nilp}{\mathrm{Nilp}}
\renewcommand{\det}{\mathrm{det}}
\newcommand{\Gal}{\mathrm{Gal}}
\newcommand{\Aut}{\mathrm{Aut}}
\newcommand{\Spf}{\mathrm{Spf}}
\newcommand{\Spec}{\mathrm{Spec}}
\newcommand{\Hom}{\mathrm{Hom}}
\newcommand{\Lie}{\mathrm{Lie}}
\newcommand{\GL}{\mathrm{GL}}
\newcommand{\et}{\mathrm{\acute{e}t}}
\newcommand{\cyc}{\mathrm{cyc}}
\newcommand{\ord}{\mathrm{ord}}
\newcommand{\Cont}{\mathrm{Cont}}
\newcommand{\Sym}{\mathrm{Sym}}
\newcommand{\Fl}{\mathcal{F}l}
\newcommand{\HT}{\mathrm{HT}}
\newcommand{\ab}{\mathrm{ab}}
\newcommand{\sm}{\mathrm{sm}}
\newcommand{\Katz}{\mathrm{Katz}}
\newcommand{\ur}{\mathrm{ur}}
\newcommand{\eval}{\mathrm{eval}}
\newcommand{\can}{\mathrm{can}}
\newcommand{\colim}{\mathrm{colim}}
\renewcommand{\Im}{\mathrm{Im}}
\newcommand{\bbQ}{\mathbb{Q}}
\newcommand{\bbZ}{\mathbb{Z}}
\newcommand{\Mant}{\mathrm{Mant}}
\newcommand{\Tate}{\mathrm{Tate}}
\newcommand{\Ig}{\mathfrak{I}}
\newcommand{\CS}{\mathrm{CS}}
\newcommand{\bdd}{\mathrm{bdd}}
\newcommand{\diag}{\mathrm{diag}}
\newcommand{\cusp}{\mathrm{cusp}}
\newcommand{\mr}[1]{\mathrm{#1}}
\newcommand{\ul}[1]{\underline{#1}}
\newcommand{\mbb}[1]{\mathbb{#1}}
\newcommand{\mc}[1]{\mathcal{#1}}
\newcommand{\Qpab}{\mathbb{Q}_p^{\mathrm{ab}}}
\newcommand{\Zpbreve}{\breve{\mathbb{Z}}_p}
\newcommand{\Qpbreve}{\breve{\mathbb{Q}}_p}
\newcommand{\Kir}{\mathrm{Kir}}
\numberwithin{equation}{subsubsection}
\theoremstyle{plain}
\newtheorem{maintheorem}{Theorem} 
\newtheorem{maincorollary}[maintheorem]{Corollary}
\newtheorem*{theorem*}{Theorem}
\newtheorem*{conjecture*}{Conjecture}
\newtheorem{theorem}[subsubsection]{Theorem}
\newtheorem{corollary}[subsubsection]{Corollary}
\newtheorem{conjecture}[subsubsection]{Conjecture}
\newtheorem{proposition}[subsubsection]{Proposition}
\newtheorem{lemma}[subsubsection]{Lemma}
\theoremstyle{definition}
\newtheorem{example}[subsubsection]{Example}
\newtheorem{definition}[subsubsection]{Definition}
\newtheorem{remark}[subsubsection]{Remark}
\title[The completed Kirillov model]{The completed Kirillov model and local-global compatibility for functions on Igusa varieties}
\author{Sean Howe}
\date{\today}
\begin{document}

\begin{abstract}
We describe the cuspidal functions $\mathbb{V}_b^{\cusp}$ on the ordinary Caraiani-Scholze Igusa variety for $\GL_2$ as a completion of the smooth Kirillov model for classical cuspidal modular forms, and identify a variant of Hida's ordinary $p$-adic modular forms with the coinvariants of an action of $\tilde{\mu}_{p^\infty}$ on $\mathbb{V}_b^{\cusp}$. As a consequence of these results, we establish a weak local-global compatibility theorem for eigenspaces in $\mathbb{V}_b^{\cusp}$ associated to classical cuspidal modular forms. Based on these results, we conjecture an analog of Hida theory and an associated local-global compatibility for functions on more general Caraiani-Scholze Igusa varieties, which are natural spaces of $p$-adic automorphic forms. 
\end{abstract}

\maketitle

\setcounter{tocdepth}{1}
\tableofcontents

\section{Introduction}

Fix an integer prime $p$. In \cite{Howe.AUnipotentCircleActionOnpAdicModularForms}, we constructed an action of $\hat{\mathbb{G}}_m=\mu_{p^\infty}$ on the Katz-Serre space of $p$-adic modular forms integrating the differential operator $\theta=q\partial_q$. These $p$-adic modular forms are global functions on the Katz-Igusa formal scheme, and the construction was by descent of a larger action of $\tilde{\mu}_{p^\infty}=\mathcal{H}om(\mathbb{Q}_p, \mu_{p^\infty})$ on a natural cover, the Caraiani-Scholze Igusa formal scheme. As announced in \cite[1.2.13]{Howe.AUnipotentCircleActionOnpAdicModularForms} but deferred to the present work (see \cref{main.hida-coinvariants} and \cref{remark.ordinary-coinvariant-variants}), an elementary computation with $q$-expansions shows that the space of Hida-ordinary\footnote{We use this term to refer to ordinary $p$-adic modular forms in the sense of Hida to avoid confusion with references to the ordinary locus that pervade the theory of $p$-adic modular forms.} Katz-Serre $p$-adic modular forms can be identified with a space of topological co-invariants for the action of $\mu_{p^\infty}$ on Katz-Serre $p$-adic modular forms. 

In this work, we view cuspidal functions on the Caraiani-Scholze Igusa formal scheme as a natural space of $p$-adic \emph{automorphic} forms, and prove a local-global compatibility result at $p$ for this larger space, \cref{main.lg}. The interpretation of Hida-ordinary forms as coinvariants provided by \cref{main.hida-coinvariants} plays a key role in the proof. The other new ingredient is a construction of this space as a completion of the Kirillov model of the smooth representation of $\GL_2(\mathbb{A}_f)$ on classical cusp forms of weight $k \geq 2$, \cref{main.completion-kirillov-model}. The proof also appeals to Hida's finiteness for the ordinary Hecke algebra, and the theory of overconvergent companions as in \cite{BreuilEmerton.RepresentationPAdiquesOrdinarsDeGl2Qp}. 

 More generally, it is natural to consider functions on arbitrary Caraiani-Scholze Igusa formal schemes as spaces of $p$-adic automorphic forms. We conjecture (see \cref{ss.conj-gen} and \cref{s.conjecture}) that, for any such Igusa formal scheme, the natural space of co-invariants under a large unipotent group action is an admissible Banach representation of an associated $p$-adic Lie group. We also speculate on local-global compatibility in this setting. In particular, in \cref{ss.model-representations} we suggest a construction of local representations as measures on local Shimura varieties that is compatible with \cref{main.lg}.  

\begin{remark} We apologize to any interested readers for the long delay between the announcement of some of these results in \cite{Howe.AUnipotentCircleActionOnpAdicModularForms} and their appearance here! 
\end{remark}

\subsection{A completed Kirillov model}
We write $\mathbb{A}$ for the ad\`{e}les of $\mathbb{Q}$, $\mathbb{A}_f$ for the finite ad\`{e}les, and $\mathbb{A}_f^{(p)}$ for the prime-to-$p$ finite ad\`{e}les. 

For $b= \begin{bmatrix} p^{-1} & 0 \\ 0 & 1 \end{bmatrix}$, we let $\mathbb{V}_{b}^{\cusp}$ denote the cuspidal functions on the associated  Caraiani-Scholze Igusa formal scheme with infinite level away from $p$ (a cover of the ordinary locus of the formal $p$-adic modular curve). We let $\mathbb{V}_{b,\mbb{Q}_p}^{\cusp}=\mathbb{V}_{b}^{\cusp}[1/p]$, a $\mathbb{Q}_p$-Banach space equipped with commuting actions of $\GL_2(\mathbb{A}_f^{(p)})$ and a group $\tilde{G}_b$ that is an extension of $T(\mathbb{Q}_p)=\mathbb{Q}_p^\times \times \mathbb{Q}_p^\times$ by $\tilde{\mu}_{p^\infty}$. Our main results concern the structure of $\mathbb{V}_{b,\mbb{Q}_p}^{\cusp}$ as a Banach representation of these groups. We note that $\tilde{\mu}_{p^\infty}(\mathbb{Z}_p)=\{1\}$, but, by $p$-adic Fourier theory, the action of $\tilde{\mu}_{p^\infty}$ on $\mathbb{V}_{b,\mathbb{Q}_p}^\cusp$ can be understood as an action of the ring $C^\bdd(\mathbb{Q}_p, \mathbb{Q}_p)$ of bounded continuous functions on $\mathbb{Q}_p$. Thus, in place of the action of $\tilde{G}_b$, one can consider an action of $T(\mathbb{Q}_p) \ltimes C^{\bdd}(\mathbb{Q}_p, \mathbb{Q}_p).$

\begin{remark}\label{remark.p-adic-automorphic}
  The space $\mathbb{V}_{b,\mathbb{Q}_p}^{\mathrm{cusp}}$ is a natural space of $p$-adic \emph{automorphic} forms for $\GL_2$. Conceptually, this is because of the analogy, furnished by Kottwitz's theory of global isocrystals \cite{Kottwitz.BGForAllLocalAndGlobalFields}, between $\Ig^b$ and the archimedean space $\GL_2(\mathbb{Q})\backslash \GL_2(\mathbb{A})$ whose $\mathbb{C}$-valued functions host the classical theory of complex automorphic forms for $\GL_2$. At a practical level, this is true by comparison with more traditional spaces: as explained in \cite{Howe.AUnipotentCircleActionOnpAdicModularForms}, the Katz-Serre space of cuspidal $p$-adic \emph{modular} forms $\mathbb{V}_{\Katz,\mbb{Q}_p}^\cusp$ and its variant $\mathbb{V}_{\Mant, \mbb{Q}_p}^\cusp$, can be recovered as
  \[ \mathbb{V}_{\Katz,\mbb{Q}_p}^\cusp = \left( \mathbb{V}_{b,\mbb{Q}_p}^\cusp \right)^{(1 \times \mathbb{Z}_p^\times) \ltimes \mathbb{Z}_p(1)} \textrm{ and } \mathbb{V}_{\Mant, \mbb{Q}_p}^\cusp = \left( \mathbb{V}_{b, \mbb{Q}_p}^\cusp \right)^{\mathbb{Z}_p(1)},  \]
 where $\mathbb{Z}_p(1)=T_p(\mu_{p^\infty})$, the Tate module of $\mu_{p^\infty}$, is viewed as a subgroup of $\tilde{\mu}_{p^\infty}$.  
 \end{remark}

Our first result describes $\mathbb{V}_{b,\mathbb{Q}_p}^\cusp$ in terms of the smooth Kirillov models of classical cusp forms after an extension of scalars. For $\mathbb{C}_p=\overline{\mathbb{Q}_p}^\wedge$, we write $\mathbb{V}_{b,\mathbb{C}_p}^\cusp:=\mathbb{V}_{b,\mbb{Q}_p}^\cusp \hat{\otimes} \mathbb{C}_p$, which admits an action of $T(\mathbb{Q}_p) \ltimes C^{\bdd}(\mathbb{Q}_p, \mathbb{C}_p).$ We also fix a compatible system of primitive $n$th roots of unity in $\mathbb{C}_p$, which is used in some of the constructions below (e.g., in the additive character for smooth Kirillov models).  

For $k \in \mathbb{Z}$, we write $S_{k,\mathbb{C}_p}$ for the $\GL_2(\mathbb{A}_f)$-representation of classical cusp forms of weight $k$ over $\mathbb{C}_p$. We define natural evaluation maps 
\[ \eval_k: S_k \rightarrow \mathbb{V}_{b,\mathbb{C}_p}^{\cusp} \]
that are $\GL_2(\mathbb{A}_f^{(p)})$-equivariant and satisfy a twisted equivariance for upper triangular matrices in $\GL_2(\mathbb{Q}_p)$. Using $q$-expansions, we also define a natural embedding 
\[ \Kir: \mathbb{V}_{b,\mathbb{C}_p}^\cusp \rightarrow C^\bdd(\mathbb{A}_f^\times, \mathbb{C}_p) \]
that is equivariant for \[ \left((\mathbb{Q}_p^\times \times 1) \ltimes \tilde{\mu}_{p^\infty}\right) \times \begin{bmatrix}\mathbb{A}_f^{(p),\times} & \mathbb{A}_f^{(p)} \\ 0 & 1\end{bmatrix} \leq \tilde{G}_b \times \GL_2(\mathbb{A}_f^{(p)})\]
when $C^\bdd(\mathbb{A}_f^\times, \mathbb{C}_p)$ is equipped with a simple Kirillov style action --- in particular, the Fourier dual of the action of $\tilde{\mu}_{p^\infty}$ is simply the action of $C^\bdd(\mathbb{Q}_p, \mathbb{C}_p)$ through multiplication by functions pulled back along the projection $\mathbb{A}_f^\times \rightarrow \mathbb{Q}_p^\times \subseteq \mathbb{Q}_p$. 

\begin{maintheorem}\label{main.completion-kirillov-model}(\cref{theorem.completion-kirillov-model})
For any $k \in \mathbb{Z}$, $\Kir \circ \eval_k$ is the smooth Kirillov model of $S_{k,\mathbb{C}_p}$. For  $k \geq 2$, $\Kir \circ \eval_k$ identifies $\mathbb{V}_{b, \mathbb{C}_p}^\cusp$ with the completion of the smooth Kirillov model of $S_{k,\mathbb{C}_p}$ inside $C^\bdd(\mathbb{A}_f^\times, \mathbb{C}_p)$ (for the supremum norm). 	
\end{maintheorem}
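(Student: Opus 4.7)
For the first statement, I would compute $\Kir \circ \eval_k$ directly on a classical cusp form $f \in S_{k, \mathbb{C}_p}$ via $q$-expansions. The map $\eval_k$ interprets $f$ as a function on the Igusa formal scheme using the Serre--Tate canonical differential to trivialize the weight-$k$ line bundle; on the chart indexed by $g \in \GL_2(\mathbb{A}_f^{(p)})$, its Serre--Tate $q$-expansion reproduces the classical $q$-expansion of $g \cdot f$, up to an explicit $y^k$ twist along the diagonal torus at $p$. The map $\Kir$ then reads off Fourier coefficients along $T_p(\mu_{p^\infty})$ using the fixed compatible system of roots of unity, yielding a function on $\mathbb{A}_f^\times$. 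Matching the resulting formula against the standard expression for the Kirillov realization of the Whittaker function of $f$ with respect to the additive character $\psi$ of $\mathbb{A}_f/\mathbb{Q}$ associated to our chosen roots of unity, and verifying the relevant equivariances under $\GL_2(\mathbb{A}_f^{(p)})$ and the upper-triangular Borel of $\GL_2(\mathbb{Q}_p)$, identifies the image with the smooth Kirillov model of $S_{k, \mathbb{C}_p}$.

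For the second statement, I would first check that $\Kir : \mathbb{V}_{b, \mathbb{C}_p}^\cusp \hookrightarrow C^\bdd(\mathbb{A}_f^\times, \mathbb{C}_p)$ is an isometric embedding for the sup norm. This follows from the $q$-expansion principle for cuspidal functions on the Igusa formal scheme, combined with the Fourier identification along $T_p(\mu_{p^\infty})$ used to define $\Kir$: the sup norm of a cuspidal function equals the sup norm of its Serre--Tate $q$-expansions at the boundary cusps, which coincides with the sup norm of the associated Kirillov function on $\mathbb{A}_f^\times$. Consequently, it suffices to show that the sup-norm closure $W$ of $\eval_k(S_{k, \mathbb{C}_p})$ equals $\mathbb{V}_{b, \mathbb{C}_p}^\cusp$. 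Formally, $W$ is stable under $\GL_2(\mathbb{A}_f^{(p)})$, under the upper-triangular subgroup of $\GL_2(\mathbb{Q}_p)$ by the twisted equivariance of $\eval_k$, and under the $C^\bdd(\mathbb{Q}_p, \mathbb{C}_p)$-action, which is the Fourier dual of $\tilde{\mu}_{p^\infty}$ and preserves the sup norm since it acts by multiplication by bounded functions.

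The main obstacle is to upgrade these formal stability properties to the equality $W = \mathbb{V}_{b, \mathbb{C}_p}^\cusp$. I would attack this via coinvariants. By \cref{main.hida-coinvariants}, the $\tilde{\mu}_{p^\infty}$-coinvariants of $\mathbb{V}_{b, \mathbb{C}_p}^\cusp$ are identified with a variant of Hida's space of ordinary $p$-adic cusp forms. Hida's classicity and finiteness for the ordinary Hecke algebra, applied in weight $k \geq 2$, realize every Hida-ordinary form as a classical weight-$k$ form with an appropriate nebentype, so $\eval_k$ surjects onto the coinvariant quotient and $W$ covers the Hida-ordinary quotient of $\mathbb{V}_{b, \mathbb{C}_p}^\cusp$. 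To promote this coinvariant-level surjectivity to sup-norm density, I would combine the $C^\bdd(\mathbb{Q}_p, \mathbb{C}_p)$-invariance of $W$ with the overconvergent companion theorem of Breuil--Emerton: for each ordinary refinement not already in the naive image, the companion theorem supplies a classical weight-$k$ form whose evaluation contributes the missing direction. Because $\Kir$ is isometric and the combined action of $\GL_2(\mathbb{A}_f^{(p)})$, the Borel at $p$, and $C^\bdd(\mathbb{Q}_p, \mathbb{C}_p)$ detects the full Banach structure in the Kirillov picture, the coinvariant-level surjectivity combined with companion coverage forces $W = \mathbb{V}_{b, \mathbb{C}_p}^\cusp$. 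This last upgrade is the most delicate step, and the one for which the Breuil--Emerton companion theorem is essential.
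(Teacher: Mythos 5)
The first half of your argument (identifying $\Kir\circ\eval_k$ with the smooth Kirillov model via $q$-expansions and equivariance) is essentially the paper's approach and is fine, as is your reduction of the second half to the isometry-plus-density pair: the paper proves the isometry exactly as you suggest, via a $q$-expansion-principle argument (Proposition 5.1.5, $\Kir$ injective with $\mathbb{Z}_p$-flat cokernel), though you gloss over the mod-$p$ step of showing that the single Tate-curve orbit used to define $\Kir$ hits every connected component of $\mathrm{Ig}^b_{\overline{\mathbb{F}}_p}$, which is the real content there.

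The gap is in your proposed density argument. The paper's proof of density (Lemma 5.2.6) is a bootstrap: it starts from the classical Hida--Shimura density of cusp forms of level $\Gamma_1(p^rN)$ in $\mathbb{V}_{\Katz',\mathbb{Q}_p^\cyc}^{\cusp,K_1^{(p)}(N)}$, and then propagates it up through $\mathbb{V}_{\Katz'} \rightsquigarrow \mathbb{V}_{\Mant} = C(\mathbb{Z}_p^\times,\mathbb{V}_{\Katz'}) \rightsquigarrow \mathbb{V}_b$ using the Frobenius-colimit presentation and the action of $\begin{bmatrix}p&0\\0&1\end{bmatrix}$. You never invoke this classical input, and instead route through \cref{main.hida-coinvariants}, Hida classicality, and the Breuil--Emerton companion theorem. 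There are two problems. First, this is circular within the paper's logical structure: the existence of the ordinary projector $e=\lim U_p^{n!}$, and hence \cref{main.hida-coinvariants} itself, is proved precisely by invoking the density result from Lemma 5.2.6. Second, even granting the coinvariant identification, your logic is incomplete: surjecting onto the $\tilde{\mu}_{p^\infty}$-coinvariant quotient, even together with containing all compactly supported functions, does not a priori imply density in a Banach space unless one has the right short exact sequence available, and no such sequence is exhibited; and the Breuil--Emerton companion theorem is not relevant here at all --- in the paper it enters only in the proof of \cref{main.lg} (the lower bound in \S7.4) to produce a single missing eigenvector in the reducible-split case, which is an entirely different task from establishing sup-norm density. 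The phrase ``detects the full Banach structure'' is doing all the work in your last step and is not a proof.
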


After constructing the map and checking the equivariance, there are two ingredients in the proof of \cref{main.completion-kirillov-model}: 
The density of $S_{k,\mathbb{C}_p}$, $k \geq 2$ in $\mathbb{V}_{b, \Qpab}^\cusp$ is an analog of the classical density of cusp forms of level $\Gamma_1(p^n)$ in $\mathbb{V}_{\Katz,\mbb{Q}_p}^\cusp$ (attributed by Hida to Shimura), and the proof of the former is by bootstrap from the latter. That $\Kir$ is a topological embedding is an analog of Katz's $q$-expansion principle for $\mathbb{V}_{\Katz,\mbb{Q}_p}^\cusp$, and the proof of the former is also essentially by bootstrap from the latter. 

\begin{remark}  
If we instead consider $\bigoplus_{k \in \mathbb{Z}} \eval_k$, then one can obtain the density by bootstrap from Katz's density result for modular forms of level $1$ at $p$ but arbitrary weight \cite{Katz.HigherCongruencesBetweenModularForms}. In \cite{Howe.ThepAdicJacquetLanglandsCorrespondenceAndAQuestionOfSerre} we gave a different argument to show the image of $\bigoplus_{k \in \mathbb{Z}} \eval_k$ is dense that uses more recent tools but is very soft in the sense that it can be applied to establish similar statements for other Igusa varieties. 
\end{remark}

\subsection{Local-global compatibility}

Suppose $\pi=\pi_p \otimes \pi^{(p)} \subseteq S_{k,\mathbb{C}_p}$ is an irreducible representation of $\GL_2(\mathbb{A}_f)=\GL_2(\mathbb{Q}_p)\times \GL_2(\mathbb{A}_f^{(p)})$. It is natural to try to describe 
\[ W_\pi:= \Hom_{\GL_2(\mathbb{A}_f^{(p)})}(\pi^{(p)}, \mathbb{V}_{b,\mathbb{C}_p}^\cusp) \]
as a representation of $T(\mathbb{Q}_p) \times C^\bdd(\mathbb{Q}_p, \mathbb{C}_p)$. As a first step, combining \cref{main.completion-kirillov-model} with the theory of new vectors gives rise to canonical  embeddings 
\begin{equation}\label{eq.local-global-inclusions} (\pi_p^\Kir)^\wedge \subseteq W_\pi \subseteq C^\bdd(\mathbb{Q}_p^\times, \mathbb{C}_p)  \end{equation}
where $(\pi_p^\Kir)^\wedge$ denotes the completion of the Kirillov model for $\pi_p$ in $C^\bdd(\mathbb{Q}_p^\times, \mathbb{C}_p)$.

Attached to $\pi$, there is an irreducible Galois representation\footnote{The precise normalizations are given in \cref{ss.galois-classical-modular-forms}, but we summarize them here: we view modular forms of weight $k$ as sections of $\omega_{E^\vee}^k$ to obtain the $\GL_2(\mathbb{A}_f)$ action, and then the trace of geometric Frobenius for $\rho$ at an unramified prime $\ell$ is given by $\frac{1}{\ell}$ times the eigenvalue of the double coset operator $\GL_2(\mathbb{Z}_\ell)\begin{bmatrix}\ell & 0 \\ 0 & 1 \end{bmatrix} \GL_2(\mathbb{Z}_\ell)$ acting on  $\pi_\ell^{\GL_2(\mathbb{Z}_\ell)}$.}  $\rho: \Gal(\overline{\mathbb{Q}}/\mathbb{Q}) \rightarrow \GL_2(\mathbb{C}_p)$ and one expects that $W_\pi$ depends only on the restriction $\rho_p= \rho|_{\Gal(\overline{\mathbb{Q}}_p/\mathbb{Q}_p)}$. The results of \cite{JacquetLanglands.AutomorphicFormsOnGL2} give an explicit description of $\pi_p^\Kir$ that we can use to compute $(\pi_p^\Kir)^\wedge$ so that its contribution can be made explicit. In particular, the following conjecture says that if $\rho_p$ is indecomposable then we expect the first inclusion in \cref{eq.local-global-inclusions} to be an isomorphism. We write $\mc{S}(\mathbb{Q}_p^\times, \mathbb{C}_p) \subseteq C^\bdd(\mathbb{Q}_p, \mathbb{C}_p)$ for the continuous $\mathbb{C}_p$-valued functions on $\mathbb{Q}_p^\times$ converging to $0$ at both $0$ and $\infty$. We write $1_{\mathbb{Z}_p}$ for the indicator function of $\mathbb{Z}_p$ in $\mathbb{Q}_p$, or its restriction to $\mathbb{Q}_p^\times$. 

\begin{conjecture}\label{conj.lg-mf}Suppose $k \geq 2$ and $\pi=\pi_p \otimes \pi^{(p)} \subseteq S_{k,\mathbb{C}_p}$ is an irreducible representation of $\GL_2(\mathbb{A}_f)$. Let  $W_\pi:=\Hom_{\GL_2(\mathbb{A}_f^{(p)})}(\pi^{(p)}, \mathbb{V}_{b,\mathbb{C}_p}^\cusp)$. Then, as a representation of $(\mathbb{Q}_p^\times \times 1) \ltimes C^\bdd(\mathbb{Q}_p, \mathbb{C}_p)$, 
 \[   W_\pi = \mc{S}(\mathbb{Q}_p^\times, \mathbb{C}_p) + \sum_{\chi \subset \rho_p} \mathbb{C}_p \cdot (1_{\mathbb{Z}_p} \cdot \chi) \subseteq C^{\bdd}(\mathbb{Q}_p^\times, \mathbb{C}_p) \]
 where in the sum $\chi$ runs over the (at most $2$) characters appearing as subrepresentations of $\rho_p$, and each such $\chi$ is viewed as a character of $\mathbb{Q}_p^\times$ by class field theory (normalized so that uniformizers match with geometric Frobenius).  
\end{conjecture}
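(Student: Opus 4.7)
The plan is to prove the conjecture by reducing it, via \cref{main.completion-kirillov-model}, to showing that the inclusion $(\pi_p^\Kir)^\wedge \hookrightarrow W_\pi$ in \cref{eq.local-global-inclusions} is an equality, and then to verify this equality using \cref{main.hida-coinvariants}, Hida's finiteness for the ordinary Hecke algebra, and the theory of overconvergent companions.

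First I would check that $(\pi_p^\Kir)^\wedge$ already equals the right-hand side of the conjectured formula. The smooth Kirillov model description from \cite{JacquetLanglands.AutomorphicFormsOnGL2} gives an exact sequence
\[ 0 \to \mc{S}^{\sm}(\mathbb{Q}_p^\times, \mathbb{C}_p) \to \pi_p^{\Kir} \to J(\pi_p) \to 0, \]
where $J(\pi_p)$ is a direct sum of locally constant characters of $\mathbb{Q}_p^\times$ extended by zero outside $\mathbb{Z}_p$: zero summands for $\pi_p$ supercuspidal, one for Steinberg or special, and two for principal series. Taking sup-norm completion inside $C^{\bdd}(\mathbb{Q}_p^\times, \mathbb{C}_p)$ replaces the left term by $\mc{S}(\mathbb{Q}_p^\times, \mathbb{C}_p)$ and kills every character that is unbounded on $\mathbb{Z}_p \setminus \{0\}$; the surviving (unitary) boundary characters match, by classical local-global compatibility and the normalizations fixed in the footnote to \cref{eq.local-global-inclusions}, precisely those appearing as subrepresentations of $\rho_p$. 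Thus $(\pi_p^\Kir)^\wedge$ realizes the conjectured right-hand side, and it remains to show $W_\pi \subseteq (\pi_p^\Kir)^\wedge$.

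For the reverse inclusion, given $f \in W_\pi$ I would split off a contribution in $\mc{S}(\mathbb{Q}_p^\times, \mathbb{C}_p)$ so that the remaining part $g$ is supported near $0 \in \mathbb{Q}_p$. Such $g$ has a well-defined image in the $\tilde{\mu}_{p^\infty}$-coinvariants of $\mathbb{V}_{b,\mathbb{C}_p}^{\cusp}$, which by \cref{main.hida-coinvariants} is identified with a space of Hida-ordinary classical cusp forms. Hida's finiteness then implies that the $\pi^{(p)}$-isotypic subspace of this coinvariant space is finite-dimensional and spanned by the ordinary $p$-stabilizations of $\pi$, which are in natural bijection with the subrepresentation characters of $\rho_p$. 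Overconvergent companions \cite{BreuilEmerton.RepresentationPAdiquesOrdinarsDeGl2Qp} are then invoked to lift each such coinvariant to a concrete element of the form $1_{\mathbb{Z}_p}\cdot \chi \in (\pi_p^\Kir)^\wedge$ matching the image of $g$ in coinvariants; the difference then has trivial coinvariants image and is forced back into $\mc{S}(\mathbb{Q}_p^\times, \mathbb{C}_p)$.

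The main obstacle is precisely this passage from coinvariants back to $W_\pi$: a priori, $W_\pi$ could harbor ``ghost'' elements whose coinvariant images vanish but which themselves lie outside $\mc{S}(\mathbb{Q}_p^\times, \mathbb{C}_p) + \sum_\chi \mathbb{C}_p \cdot (1_{\mathbb{Z}_p}\cdot \chi)$. Ruling these out is the role of overconvergent companions, and the analysis is most delicate when $\rho_p$ is semistable non-crystalline with coincident parameters, or when $\pi_p$ is a principal series whose Satake parameters have the same $p$-adic valuation; in these cases the coinvariants can mask information that has to be recovered by finer methods. A secondary, bookkeeping difficulty is keeping track of character normalizations across local Langlands, the global $\GL_2(\mathbb{A}_f)$-action on cusp forms, and the embedding $\Kir$, so that the identification of unitary boundary characters in $(\pi_p^\Kir)^\wedge$ with subrepresentation characters of $\rho_p$ is literal rather than merely a bijection up to twist.
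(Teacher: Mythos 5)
You have been asked to prove what the paper itself labels a \emph{conjecture} (\cref{conj.lg-mf}); the theorem the paper actually establishes, \cref{main.lg}, is strictly weaker, and the paper states explicitly that it only knows the conjecture when $\rho_p$ is irreducible. Your strategy is the same as the paper's (identify $(\pi_p^\Kir)^\wedge$ with the right-hand side; identify $W_\pi/\mc{S}(\mathbb{Q}_p^\times,\mathbb{C}_p)$ with $(W_\pi)_{\tilde{\mu}_{p^\infty}}$ via \cref{main.hida-coinvariants}; invoke Hida finiteness and Kisin's result; bring in the Breuil--Emerton companion in the split reducible case), but you assert equality, and the step where you overreach is the one the paper leaves open.

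The gap is in the sentence ``Hida's finiteness then implies that the $\pi^{(p)}$-isotypic subspace of this coinvariant space is finite-dimensional and spanned by the ordinary $p$-stabilizations of $\pi$.'' Finiteness gives you an admissible Banach $\mathbb{Q}_p^\times$-representation, hence (after the PID structure theory used in \cref{ss.upper-bound} and Kisin's \cref{prop.ordinary-p-adicmf-unramified-sub}) a finite-dimensional space on which $\mathbb{Q}_p^\times \times 1$ acts \emph{supported} at the characters $\chi \subset \rho_p$ --- but nothing you cite forces that action to be semisimple. A priori the coinvariant fiber may contain ordinary $p$-adic cusp forms with the same prime-to-$p$ eigensystem that are only \emph{generalized} $U_p$-eigenvectors, or on which $\mathbb{Z}_p^\times \times 1$ has a nontrivial unipotent part after subtracting $\chi$; under the Kirillov embedding these are exactly the $1_{\mathbb{Z}_p}\cdot\chi_{a,b}$ with $a$ or $b$ positive that appear only in the upper bound of \cref{main.lg}. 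The Breuil--Emerton overconvergent companion supplies the missing genuine eigenvector for the lower bound when $\rho_p$ is split; it does not rule out generalized eigenvectors, and your concluding remark that the delicate cases need ``finer methods'' names the gap without filling it. As written, the proposal reaches only the two containments of \cref{main.lg}, not the equality asserted in \cref{conj.lg-mf}.
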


\begin{remark}\label{remark.central-action} The action of the diagonal $\mathbb{Q}_p^\times$ in $T(\mathbb{Q}_p)$ on  $W_\pi$ is through a character determined  by the weight $k$ and the central character of $\pi_p$ (see \cref{lemma.central-character}), so that \cref{conj.lg-mf} describes the full $T(\mathbb{Q}_p) \ltimes C^\bdd(\mathbb{Q}_p, \mathbb{C}_p)$ action on $W_\pi$. 
\end{remark}

\begin{maintheorem}\label{main.lg}In the notation of \cref{conj.lg-mf}, there exists $M \geq 0$ such that
\[ \mc{S}(\mathbb{Q}_p^\times, \mathbb{C}_p) + \sum_{\chi \subset \rho_p} \mathbb{C}_p \cdot (1_{\mathbb{Z}_p} \cdot \chi) \subseteq W_\pi \subseteq \mc{S}(\mathbb{Q}_p^\times, \mathbb{C}_p) + \sum_{\chi \subset \rho_p} \sum_{a,b \leq M} \mathbb{C}_p \cdot 1_{\mathbb{Z}_p} \cdot \chi_{a,b} \]
where, writing $\mathbb{Q}_p^\times= p^\mathbb{Z} \times \mu(\mathbb{Q}_p) \times (1+2p\mathbb{Z}_p)$, 
\[ \chi_{a,b}(p^k, \zeta, t)=\chi(p)^k k^a \chi(\zeta) \chi(t)\log(t)^b. \] 
In particular, \cref{conj.lg-mf} holds if $\rho_p$ is irreducible (the non-ordinary case). 
\end{maintheorem}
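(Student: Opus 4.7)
The plan is to leverage \cref{main.completion-kirillov-model} to realize $W_\pi$ inside $C^\bdd(\mathbb{Q}_p^\times, \mathbb{C}_p)$, obtain the lower bound by computing the sup-norm closure of the smooth Kirillov model of $\pi_p$ explicitly, and then obtain the upper bound by combining \cref{main.hida-coinvariants} with Hida's finiteness theorem and the theory of overconvergent companions.

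Starting point and lower bound. Fix $k \geq 2$ with $\pi \subseteq S_{k,\mathbb{C}_p}$. By \cref{main.completion-kirillov-model}, $\Kir \circ \eval_k$ identifies $\mathbb{V}_{b,\mathbb{C}_p}^\cusp$ with the sup-norm completion of the smooth Kirillov model of $S_{k,\mathbb{C}_p}$ inside $C^\bdd(\mathbb{A}_f^\times, \mathbb{C}_p)$. Multiplicity one for $\pi^{(p)}$ together with continuity gives the chain $(\pi_p^\Kir)^\wedge \subseteq W_\pi \subseteq C^\bdd(\mathbb{Q}_p^\times, \mathbb{C}_p)$ of \eqref{eq.local-global-inclusions}, and the equivariance properties of $\Kir \circ \eval_k$ transport the $T(\mathbb{Q}_p) \ltimes C^\bdd(\mathbb{Q}_p, \mathbb{C}_p)$-action on $W_\pi$ to the natural action on $C^\bdd(\mathbb{Q}_p^\times, \mathbb{C}_p)$. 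I would then invoke the Jacquet--Langlands description $\pi_p^\Kir \supseteq C_c^\infty(\mathbb{Q}_p^\times, \mathbb{C}_p)$, whose quotient is governed by the Jacquet module of $\pi_p$: the sup-norm closure of $C_c^\infty(\mathbb{Q}_p^\times, \mathbb{C}_p)$ is exactly $\mc{S}(\mathbb{Q}_p^\times, \mathbb{C}_p)$, and the remaining generators are finitely many terms of the form $1_{\mathbb{Z}_p}\cdot \chi$ for $\chi$ a Jacquet-module character. Standard local-global compatibility for classical newforms matches these with the characters appearing as subrepresentations of $\rho_p$ under the class-field-theoretic normalization, giving the lower bound.

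Upper bound via coinvariants. To bound $W_\pi$ from above I would work modulo $\mc{S}(\mathbb{Q}_p^\times, \mathbb{C}_p)$ and control the boundary at $0$. By \cref{main.hida-coinvariants}, the $\tilde{\mu}_{p^\infty}$-coinvariants of $\mathbb{V}_{b,\mathbb{C}_p}^\cusp$ realize (a variant of) Hida's ordinary $p$-adic modular forms. Under the Fourier-theoretic identification of the $\tilde{\mu}_{p^\infty}$-action with the action of $C^\bdd(\mathbb{Q}_p, \mathbb{C}_p)$ by multiplication, this coinvariant operation extracts precisely the image in $W_\pi/\mc{S}(\mathbb{Q}_p^\times,\mathbb{C}_p)$, i.e., the asymptotic behavior of Kirillov functions near $0 \in \mathbb{Q}_p$. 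Hida's finiteness theorem applied to the $\pi^{(p)}$-isotypic component then forces each bounded $T(\mathbb{Q}_p)$-weight subspace of the boundary to be finite-dimensional, and local-global compatibility for Hida families restricts the admissible characters of $T(\mathbb{Q}_p)$ on the boundary to those coming from the $\chi \subset \rho_p$.

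Upper bound via companions, and conclusion. The remaining task is to bound the orders $a$ and $b$ of the polynomial factors in $\chi_{a,b}$. Here I would invoke the overconvergent companions of Breuil--Emerton: in an overconvergent $p$-adic family of eigenforms, the operator $\theta = q\partial_q$ produces companion eigenvectors forming Jordan blocks of bounded size, and this Jordan-block data translates across the completed Kirillov picture into generalized eigenvectors that are characters times polynomials in the logarithmic coordinates $k = v_p(\cdot)$ and $\log(t)$, with polynomial degrees bounded by the companion dimension. This yields a uniform bound $M = M(\pi)$ and hence the upper bound. I expect the main obstacle to lie precisely in this last step, since it requires transferring rigid-analytic information about the $\theta$-action on overconvergent forms into sup-norm estimates inside the Banach representation $\mathbb{V}_{b,\mathbb{C}_p}^\cusp$. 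Finally, when $\rho_p$ is irreducible the sum of $\chi$-terms is empty, the upper and lower bounds collapse to $\mc{S}(\mathbb{Q}_p^\times, \mathbb{C}_p)$, and \cref{conj.lg-mf} follows by squeeze.
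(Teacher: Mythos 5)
Your proposal gets the broad shape right---Kirillov embedding for the lower bound, Hida/coinvariant finiteness for the upper bound---but it misassigns the roles of the two key ingredients in a way that leaves a real gap in the lower bound and a vague step in the upper bound.

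\textbf{The lower bound gap.} You write that after taking the closure of $C_c^\infty(\mathbb{Q}_p^\times,\mathbb{C}_p)$, ``the remaining generators are finitely many terms of the form $1_{\mathbb{Z}_p}\cdot\chi$ for $\chi$ a Jacquet-module character,'' which local-global compatibility for newforms matches with the subrepresentations of $\rho_p$. This is not correct. In the normalization of \cref{sss.wd-and-normalization}, $J(\pi_p)|_{\mathbb{Q}_p^\times\times 1}=\sigma_p=\WD(\rho_p)$, so the Jacquet module characters correspond to the Weil--Deligne characters $\WD(\chi)$, not to the $\chi$ themselves. Exactly one Jacquet module character has $|\cdot(p)|=1$ (the ordinary one, where $\WD(\chi_\ord)=\chi_\ord$); the other has $|\cdot(p)|=|p^{k-1}|<1$, so $1_{\mathbb{Z}_p}\cdot\WD(\chi)$ already lies in $\mc{S}(\mathbb{Q}_p^\times,\mathbb{C}_p)$ and contributes nothing new to $(\pi_p^\Kir)^\wedge$. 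But the character $\chi$ of Hodge--Tate weight $k-1$ satisfies $\chi=\WD(\chi)\cdot z^{1-k}$, so $|\chi(p)|=1$, and $1_{\mathbb{Z}_p}\cdot\chi$ is a genuinely different bounded function that does \emph{not} lie in the closure of the smooth Kirillov model. When $\rho_p$ is split, this vector has to be produced by another mechanism; that mechanism is exactly the Breuil--Emerton overconvergent companion (the weight-$(2-k)$ form $g$ with $\theta^{k-1}g=\tilde{f}$), which the paper uses in \cref{ss.lower-bound-companion} to complete the lower bound. Your proposal has no way to see $1_{\mathbb{Z}_p}\cdot\chi$ in the non-ordinary-weight direction and so does not prove the full lower bound.

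\textbf{The upper bound.} You put the Breuil--Emerton companions to work bounding the degrees $a,b$ in $\chi_{a,b}$, arguing that Jordan-block sizes from the $\theta$-action in overconvergent families give the bound $M$. The paper does not do this and it is not clear this can be made precise, because the relevant ``Jordan blocks'' live in the coinvariants, not in any space of overconvergent forms. What the paper actually does is pass to $\tilde{\mu}_{p^\infty}$-coinvariants, apply \cref{maincor.admissibility} to view $W_{\pi,\tilde{\mu}_{p^\infty}}$ as the dual of a finitely generated module over $\mathcal{O}_L[[1+2p\mathbb{Z}_p]][1/p]\cong\mathcal{O}_L[[t]][1/p]$, invoke the structure theory of finitely generated modules over this PID to conclude finite-dimensional generalized eigenspaces (this is where $M$ comes from), rule out free summands because there can be at most two characters, and then use Kisin's result \cref{prop.ordinary-p-adicmf-unramified-sub} to force any eigenvalue that appears to come from a subcharacter of $\rho_p$. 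Your upper-bound sketch has roughly the right inputs (Hida finiteness, Galois constraints) but the concrete mechanism producing the bound $M$ is different from, and much less explicit than, the paper's module-theoretic argument.
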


\subsubsection{}We sketch the argument for \cref{main.lg}: first, $\pi_p^\Kir$ always contains the locally constant compactly supported function on $\mathbb{Q}_p^\times$, and thus its closure contains a copy of $\mc{S}(\mathbb{Q}_p^\times, \mathbb{C}_p)$. The codimension of these compactly supported functions in $\pi_p^\Kir$ is $0$ if $\pi_p$ is supercuspidal, $1$ if $\pi_p$ is special, and $2$ if $\pi_p$ is principal series --- this is the dimension of the Jacquet module $J(\pi_p)$, and the extra dimensions that appear can be represented by the functions $1_{\mathbb{Z}_p} \cdot \epsilon(z,1)$ for $\epsilon$ a character of $T(\mathbb{Q}_p)$ appearing in $J(\pi_p)$. With our normalizations, $|\epsilon(p,1)| \leq 1$, with equality for at most one $\epsilon$. In particular, when the inequality is strict the function $1_{\mathbb{Z}_p} \cdot \epsilon(z,1)$ is already in $\mc{S}(\mathbb{Q}_p^\times, \mathbb{C}_p)$ so that we obtain nothing new; it is only in the ordinary case when $|\epsilon(p,1)|=1$ that we obtain something new, and in that case $\chi=\epsilon|_{\mathbb{Q}_p^\times \times 1}$ appears also as a subrepresentation of $\rho_p$. Thus we obtain a computation of $(\pi_p^\Kir)^\wedge$ and, when $\rho$ is indecomposable, it accounts for everything in the leftmost side of the containments in \cref{main.lg}. When $\rho_p$ is a direct sum of two characters, we obtain the missing vector as a twist of the ordinary Breuil-Emerton \cite{BreuilEmerton.RepresentationPAdiquesOrdinarsDeGl2Qp} companion.

To bound the size of the representation from above, we argue as follows: first, using the action of $C^\bdd(\mathbb{Q}_p, \mathbb{C}_p)$, we may view $W_\pi$ as a sheaf on $\mathbb{Q}_p$, and then pass to the fiber at zero. This is a version of topological co-invariants for the $\tilde{\mu}_{p^\infty}$ action, which we write as $(W_\pi)_{\tilde{\mu}_{p^\infty}}$. To understand these, we show that the topological co-invariants for $\mathbb{V}_b^\cusp$ are equivalent to a variant of the Hida-ordinary $p$-adic modular forms (\cref{main.hida-coinvariants}). In particular, Hida's finiteness theorem for the ordinary Hecke algebra implies that, since we already know the central character, the action of $\mathbb{Z}_p^\times \times 1$ on $(W_\pi)_{\tilde{\mu}_{p^\infty}}$ gives an admissible Banach representation of $\mathbb{Z}_p^\times$. Using the structure of these admissible representations, and a consideration of the Galois representations associated to ordinary $p$-adic modular forms, we find that the action of $\mathbb{Q}_p^\times$ is supported at the characters $\chi$ appearing in \cref{main.lg}. The vectors $1_{\mathbb{Z}_p} \cdot \chi_{a,b}$ for $a$ and/or $b$ greater than zero correspond to the possibility of generalized eigenspaces, which we do not see a way to exclude by our argument. 

\begin{remark}\label{remark.lg-equivalences}
   For $\rho_p$ irreducible,  \cref{main.lg} implies that knowing $W_\pi$ 
   is equivalent to knowing $\det(\rho_p)$ (using the central character).   When $\rho_p$ is reducible and potentially crystalline (i.e. $\pi_p$ is ordinary principal series), \cref{main.lg} implies $W_\pi$ (or $(W_\pi)_{\tilde{\mu}_{p^\infty}}$) determines $\rho_p$. When $\rho_p$ is reducible but potentially semistable with non-trivial monodromy operator (i.e. $\pi_p$ is ordinary Steinberg), \cref{main.lg} implies $W_\pi$ (or $(W_\pi)_{\tilde{\mu}_{p^\infty}}$) determines $\rho_p^\mathrm{ss}$.    In both reducible cases, if \cref{conj.lg-mf} holds then $W_\pi$ is also determined by $\rho_p$.  
\end{remark}

\subsection{Hida theory and coinvariants}
Recall that $\mathbb{V}_b^{\cusp}$ is the unit ball in $\mathbb{V}_{b,\mathbb{Q}_p}^\cusp$; we write also $\mathbb{V}_{\Mant}^\cusp=\mathbb{V}_b^{\cusp, \mathbb{Z}_p(1)}$ (cf. \cref{remark.p-adic-automorphic}). There is a natural $U_p$-operator defined on $\mathbb{V}_\Mant^\cusp$ and extending the usual $U_p$-operator in the theory of $p$-adic modular forms. The result linking Hida theory and coinvariants, which plays a key role in the proof of \cref{main.lg} described above, is:

\begin{maintheorem}[\cref{theorem.ordinary-iso-coinvariants}]\label{main.hida-coinvariants}
    For $e = \lim_n {U_p}^{n!}$ the ordinary projector on $\mathbb{V}_{\Mant}^\cusp$, restriction $e\mathbb{V}_{\Mant}^\cusp \rightarrow (\mathbb{V}_{b}^\cusp)_{\tilde{\mu}_{p^\infty}}$ is a $T(\mathbb{Q}_p) \times \GL_2(\mathbb{A}_f^{(p)})$-equivariant isomorphism. 
\end{maintheorem}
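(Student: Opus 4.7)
The plan is to use the completed Kirillov model of Theorem \ref{main.completion-kirillov-model} as a ``universal $q$-expansion,'' reducing the statement to an elementary analysis of bounded continuous functions on $\mathbb{A}_f^\times = \mathbb{A}_f^{(p),\times} \times \mathbb{Q}_p^\times$, in direct analogy with the Katz-Serre case announced in \cite{Howe.AUnipotentCircleActionOnpAdicModularForms}. After base change to $\mathbb{C}_p$, $p$-adic Fourier duality identifies the $\tilde{\mu}_{p^\infty}$-action with multiplication by elements of $C^\bdd(\mathbb{Q}_p,\mathbb{C}_p)$ pulled back along $\mathbb{A}_f^\times \to \mathbb{Q}_p^\times \hookrightarrow \mathbb{Q}_p$. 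Under this dictionary, the $\mathbb{Z}_p(1)$-invariance defining $\mathbb{V}_{\Mant,\mathbb{C}_p}^\cusp$ translates into the condition that the Kirillov function extends, in the $\mathbb{Q}_p$-direction, continuously from $\mathbb{Q}_p^\times$ to $\mathbb{Z}_p$ (an Amice-type statement), while the topological coinvariants $(\mathbb{V}_{b,\mathbb{C}_p}^\cusp)_{\tilde{\mu}_{p^\infty}}$ are by definition the fiber at $0$ of this sheaf, i.e.\ the restriction of such functions to $\mathbb{A}_f^{(p),\times}\times \{0\}$.

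The next step is to translate the $U_p$-operator through $\Kir$. Using the upper-triangular equivariance supplied by $\eval_k$ in Theorem \ref{main.completion-kirillov-model}, one expects $U_p$ to act on the Kirillov side by $(U_p f)(a^{(p)}, t) = f(a^{(p)}, p\cdot t)$ (up to normalization), extending the classical rule $\sum a_n q^n \mapsto \sum a_{pn} q^n$. Iterating, for $f \in \mathbb{V}_\Mant^\cusp$ the sequence $U_p^{n!} f$ converges in sup-norm as $n\to\infty$ to the function constant in $t \in \mathbb{Z}_p$ with value $f(-,0)$, using the continuous extension of $f$ to $\mathbb{Z}_p$. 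Hence $e$ is exactly ``evaluate at $t=0$ and spread constantly,'' and the restriction map $e\mathbb{V}_\Mant^\cusp \to (\mathbb{V}_b^\cusp)_{\tilde{\mu}_{p^\infty}}$ becomes manifestly a bijection onto the fiber at $0$: injectivity is immediate since the image determines the constant extension, and surjectivity holds because any bounded continuous function on $\mathbb{A}_f^{(p),\times}$ extends constantly to $\mathbb{A}_f^{(p),\times}\times \mathbb{Z}_p$ and then lies in the image of $e$. Equivariance for $\GL_2(\mathbb{A}_f^{(p)}) \times T(\mathbb{Q}_p)$ is inherited from the equivariance of $\Kir$, since $\GL_2(\mathbb{A}_f^{(p)})$ commutes with all structures at $p$ and $T(\mathbb{Q}_p)$ normalizes $\tilde{\mu}_{p^\infty}$; finally, the identification descends from $\mathbb{C}_p$ to $\mathbb{Z}_p$ by passing to unit balls.

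The main obstacle I anticipate is the $U_p$ computation. The geometric $U_p$ is defined via isogenies of ordinary elliptic curves, and its passage through $\Kir$ is classical folklore for the subspace of \emph{classical} cusp forms in $\mathbb{V}_\Mant^\cusp$, but must be promoted to the full Banach space in a manner compatible with the continuous extension to $\mathbb{Z}_p$ witnessing $\mathbb{Z}_p(1)$-invariance. A secondary technical point is identifying the ``extension to $\mathbb{Z}_p$'' condition with $\mathbb{Z}_p(1)$-invariance; this is a Fourier-theoretic claim which should follow by a density argument from its analog on classical cusp forms, where density is supplied by Theorem \ref{main.completion-kirillov-model}.
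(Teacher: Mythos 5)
Your proposal follows the right high-level strategy — translate everything to the Kirillov model $\Kir$ and compare the ordinary projector to the fiber-at-$0$ map — and this is also what the paper does. But the central step of your analysis is wrong, and the error is not a technical gap but a misreading of what the Kirillov model of $\mathbb{V}_\Mant^\cusp$ looks like.

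You claim that $\mathbb{Z}_p(1)$-invariance translates, on the Kirillov side, into the Kirillov function extending continuously from $\mathbb{Q}_p^\times$ to $\mathbb{Z}_p$, and that consequently $U_p^{n!}f$ converges in sup-norm to the function constant in the $\mathbb{Z}_p$-variable with value $f(-,0)$. Neither statement is correct. Under $p$-adic Fourier theory, $\mathbb{Z}_p(1)$-invariance is a \emph{support} condition: it says that $\Kir(f)$ is supported on $\mathbb{Z}_p\setminus\{0\}$ (equivalently $1_{\mathbb{Z}_p}\cdot f = f$; cf.\ \cref{eq.V-mant-proj} and \cref{cor.density-zp1}), not a \emph{continuity-at-$0$} condition. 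Bounded continuous functions on $\mathbb{Z}_p\setminus\{0\}$ need not have a limit at $0$, and typical ordinary elements of $\mathbb{V}_\Mant^\cusp$ do not. Concretely, the $p$-stabilization $f_\alpha$ of an ordinary newform with unit $U_p$-eigenvalue $\alpha$ has Kirillov function $1_{\mathbb{Z}_p}(z)\chi(z)$ in the $\mathbb{Q}_p^\times$-variable, where $\chi$ is unramified with $\chi(p)=\alpha$ (\cref{remark.canonical-lift-Kirillov-ordinary}). This has no limit as $z\to 0$ unless $\alpha=1$ (already mod $p$ the value at $p^n$ is $\bar\alpha^n$, which oscillates). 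Moreover $U_p^{n!}f_\alpha = \alpha^{n!}f_\alpha$, which by the algebraicity argument in the existence proof of $e$ converges to $f_\alpha$, not to a constant function. So $e f_\alpha = f_\alpha$, falsifying your description of $e$ as ``evaluate at $t=0$ and spread constantly,'' and the surjectivity claim collapses: the image of $e$ is genuinely larger than the constant-in-$t$ functions.

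The paper's argument is organized around the \emph{kernel} of $e$, computed mod $p^k$. Using $(U_pf)(z)=f(pz)$ and the injectivity of $\Kir$ with flat cokernel (\cref{prop.Kir-q-exp-principle} --- this is the $q$-expansion-principle input, not the density of \cref{main.completion-kirillov-model}), one checks directly that $v\in\mathbb{V}^\cusp_{\Mant,\mathbb{Z}/p^k}$ lies in $\ker(e)$ if and only if $\Kir(v)$ vanishes on $p^n\mathbb{Z}_p\setminus\{0\}$ for some $n$, equivalently $v = 1_{\mathbb{Z}_p\setminus p^n\mathbb{Z}_p}\cdot v$. Since every element of the augmentation ideal of $C(\mathbb{Z}_p,\mathbb{Z}/p^k)$ at $0$ is a multiple of such an indicator (because $\mathbb{Z}/p^k$ is discrete), $\ker(e)$ coincides with the kernel of the fiber-at-$0$ map, and the isomorphism follows because both $e$ and the coinvariants projection are surjective. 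The reduction from $\tilde{\mu}_{p^\infty}$- to $\mu_{p^\infty}$-coinvariants is supplied by \cref{lemma.topological-coinvariants-agree}. If you rebuild your argument around the kernels rather than a (false) explicit formula for the image of $e$, the rest of your framework survives.
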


Here the $U_p$ operator acts on $q$-expansions in the usual way: one finds that the map $\Kir$ on $\mathbb{V}_{\Mant}^
\cusp$ factors through functions on $(\mathbb{Z}_p\backslash 0) \times (\mathbb{A}_f^{(p)})^\times$, and, in this Kirillov model, $(U_p f)(z_p z^{(p)})=f( (pz_p) z^{(p)})$. From this description, it is easy to see that the elements in the kernel of $e$ are exactly those killed by passing to the fiber at $0$ in $\mathbb{Q}_p$, i.e. to  $(\mathbb{V}_{b}^\cusp)_{\tilde{\mu}_{p^\infty}}$. Thus the proof of \cref{main.hida-coinvariants} is an elementary argument with $q$-expansions, so that the main novelty is the statement itself. 

\begin{remark}\label{remark.ordinary-coinvariant-variants}
We have $\mathbb{V}_\Mant^\cusp= 1_{\mathbb{Z}_p} \cdot \mathbb{V}_b^\cusp$ (the action of the $1_{\mathbb{Z}_p}$ is a projection onto the $\mathbb{Z}_p(1)$-invariants by a normalized trace). As in \cite{Howe.AUnipotentCircleActionOnpAdicModularForms}, there is a residual action of $\mu_{p^\infty}=\tilde{\mu}_{p^\infty}/\mathbb{Z}_p(1)$ on $\mathbb{V}_\Mant^\cusp$, whose Fourier dual is an action of $C(\mathbb{Z}_p, \mathbb{Z}_p) = 1_{\mathbb{Z}_p} \cdot C(\mathbb{Q}_p, \mathbb{Z}_p)$. Since taking the fiber at $0 \in \mathbb{Q}_p$ factors through this projection followed by taking the fiber at $0 \in \mathbb{Z}_p$, \cref{main.hida-coinvariants} is equivalent to
\[ e\mathbb{V}_{\Mant}^\cusp = (\mathbb{V}_{b}^\cusp)_{\tilde{\mu}_{p^\infty}} = (\mathbb{V}_{\Mant}^\cusp)_{\mu_{p^\infty}}. \]
Similarly, if we work on $\mathbb{V}_{\Katz}^\cusp$ as in \cite{Howe.AUnipotentCircleActionOnpAdicModularForms} (by viewing the Katz-Igusa formal scheme as a connected component of the Mantovan-Igusa formal scheme), then we can show
\[e \mathbb{V}_{\Katz}^\cusp = (\mathbb{V}_{\Katz}^\cusp)_{\mu_{p^\infty}} \]
where the left-hand side is now on the nose the space of ordinary forms considered by Hida (at least after passing to invariants for a compact open $K^p \leq \GL_2(\mathbb{A}_f^{(p)})$). 
\end{remark}

 Combining \cref{main.hida-coinvariants} with Hida's finiteness theorem for the ordinary Hecke algebra (interpreted dually as an admissibility statement for $e \mathbb{V}_{\Katz}^\cusp$), we show
 \begin{maincorollary}\label{maincor.admissibility} For $K^p \leq \GL_2(\mathbb{A}_f^{(p)})$ compact open,  $(\mathbb{V}_{b, \mbb{Q}_p}^{\cusp,K^p})_{\tilde{\mu}_{p^\infty}}$ is an admissible Banach representation of $T(\mathbb{Q}_p)$.
 \end{maincorollary}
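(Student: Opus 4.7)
The plan is to combine \cref{main.hida-coinvariants} with Hida's finiteness theorem (as invoked via \cref{remark.ordinary-coinvariant-variants}), descending through the Mantovan-over-Katz tower so that the latter applies. First, I take $K^p$-invariants in \cref{main.hida-coinvariants}; this commutes cleanly with $\tilde{\mu}_{p^\infty}$-coinvariants since the $K^p$-action factors through a group that commutes with and is independent of $\tilde{\mu}_{p^\infty}$, and it yields a $T(\mathbb{Q}_p)$-equivariant identification
\[ (\mathbb{V}_{b,\mbb{Q}_p}^{\cusp, K^p})_{\tilde{\mu}_{p^\infty}} \cong e\mathbb{V}_{\Mant, \mbb{Q}_p}^{\cusp, K^p}. \]
Since every compact open of $T(\mathbb{Q}_p) = \mathbb{Q}_p^\times \times \mathbb{Q}_p^\times$ lies inside $T(\mathbb{Z}_p) = \mathbb{Z}_p^\times \times \mathbb{Z}_p^\times$, admissibility reduces to showing that the invariants of $e\mathbb{V}_{\Mant, \mbb{Q}_p}^{\cusp, K^p}$ under $(1 + p^m\mathbb{Z}_p) \times (1 + p^n\mathbb{Z}_p)$ are finite-dimensional for some (equivalently, all sufficiently large) $m, n$.

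Next, I factor through the Mantovan-over-Katz tower. By \cref{remark.p-adic-automorphic}, $\mathbb{V}_\Katz^\cusp = (\mathbb{V}_\Mant^\cusp)^{1 \times \mathbb{Z}_p^\times}$, reflecting the fact that the Mantovan-Igusa formal scheme additionally trivializes the \'etale part of the ordinary $p$-divisible group and is therefore a $\mathbb{Z}_p^\times$-torsor over the Katz-Igusa formal scheme for the second factor $1 \times \mathbb{Z}_p^\times$ of $T(\mathbb{Z}_p)$. From the remark in \cref{remark.ordinary-coinvariant-variants} identifying Katz-Igusa as a connected component of Mantovan-Igusa, this torsor is trivialized on components, so at each finite level the $(1 \times (1+p^n\mathbb{Z}_p))$-invariants in $\mathbb{V}_{\Mant, \mbb{Q}_p}^{\cusp, K^p}$ form a finite direct sum of copies of $\mathbb{V}_{\Katz, \mbb{Q}_p}^{\cusp, K^p}$ indexed by $\mathbb{Z}_p^\times/(1+p^n\mathbb{Z}_p)$. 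This decomposition is preserved by the ordinary projector $e$ and by the residual action of the first factor $\mathbb{Z}_p^\times \times 1$.

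Finally, by \cref{remark.ordinary-coinvariant-variants}, $e\mathbb{V}_{\Katz, \mbb{Q}_p}^{\cusp, K^p}$ is Hida's space of ordinary cuspidal $p$-adic modular forms of tame level $K^p$, which by Hida's finiteness theorem is dual to a finitely generated module over the Iwasawa algebra $\mathbb{Z}_p[[\mathbb{Z}_p^\times]]$ for the first factor $\mathbb{Z}_p^\times \times 1$; equivalently, it is an admissible Banach representation of $\mathbb{Z}_p^\times \times 1$, so its $((1+p^m\mathbb{Z}_p) \times 1)$-invariants are finite-dimensional. Combining with the second step, the $(1+p^m\mathbb{Z}_p) \times (1+p^n\mathbb{Z}_p)$-invariants of $e\mathbb{V}_{\Mant, \mbb{Q}_p}^{\cusp, K^p}$ are a finite direct sum of finite-dimensional spaces, yielding the required admissibility. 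The main bookkeeping obstacle is the decomposition in the second step --- confirming that the $\mathbb{Z}_p^\times$-torsor of Mantovan over Katz splits cleanly enough for the $(1 \times (1+p^n\mathbb{Z}_p))$-invariants to become a finite sum of $\mathbb{V}_\Katz^{\cusp, K^p}$-copies on which $e$ and the first factor of $T(\mathbb{Z}_p)$ act diagonally --- but this should follow directly from the moduli-theoretic constructions of the two Igusa formal schemes together with the explicit action of $T(\mathbb{Z}_p) \subset \tilde{G}_b$.
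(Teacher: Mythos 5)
Your overall strategy matches the paper's: pass via \cref{main.hida-coinvariants} from the $\tilde{\mu}_{p^\infty}$-coinvariants to $e\mathbb{V}_\Mant^{\cusp,K^p}$, trivialize the $\mathbb{Z}_p^\times$-torsor from the Mantovan to the Katz-type Igusa formal scheme, and then invoke Hida's finiteness theorem for the ordinary cuspidal Hecke algebra of tame level. The fact that you route through $\Ig_\Katz$ (invariants under the lower-right $1\times\mathbb{Z}_p^\times$) rather than the paper's $\Ig_{\Katz'}$ (invariants under the upper-left $\mathbb{Z}_p^\times\times 1$, for which \cref{eq.Vmant-as-functions} supplies an explicit section) is a cosmetic difference, since the two are identified by the Weil pairing up to the expected twist in the equivariance.

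However, there is a genuine gap at exactly the spot you flag as the ``main bookkeeping obstacle,'' and it is not mere bookkeeping. You assert that after trivializing the torsor, the decomposition $\mathbb{V}_{\Mant,\mathbb{Q}_p}^{\cusp,K^p,1\times(1+p^n\mathbb{Z}_p)}\cong\bigoplus_{\mathbb{Z}_p^\times/(1+p^n\mathbb{Z}_p)}\mathbb{V}_{\Katz,\mathbb{Q}_p}^{\cusp,K^p}$ ``is preserved by the ordinary projector $e$ and by the residual action of the first factor $\mathbb{Z}_p^\times\times 1$.'' This is the crux, and it is not automatic: the section $\Ig_\Katz\to\Ig_\Mant$ is equivariant for the anti-diagonal embedding $a\mapsto\diag(a,a^{-1})$ of $\mathbb{Z}_p^\times$, and the element $\diag(p,1)$ defining $U_p$ does \emph{not} factor through this embedding, so $U_p$ has no reason to act slot-by-slot on $C(\mathbb{Z}_p^\times,\mathbb{V}_\Katz)$. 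The paper's \cref{prop.admisibillity} addresses precisely this by replacing $U_p = 1_{\mathbb{Z}_p}\cdot\diag(p,1)$ with the twisted operator $U_p' = 1_{\mathbb{Z}_p}\cdot\diag(p,p^{-1})$, which (being of anti-diagonal form) \emph{does} descend to an operator on the Katz-type factor compatibly with \cref{eq.Vmant-as-functions}, and observing that $U_p'$ and $U_p$ define the same ordinary projector. To complete your argument you would need to carry out this replacement (and justify that it does not change $e$), and you would also want the reduction to tame level $K_1^{(p)}(N)$ before quoting Hida's finiteness, as in the last paragraph of the paper's proof, since Hida's theorem as usually stated applies at that level.
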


\begin{remark}
   \Cref{main.lg} should be compared with the usual descriptions of the Jacquet module and Kirillov models of smooth generic irreducible representations of $\GL_2(\mathbb{Q}_p)$ (see \cref{ss.jacquet-kirillov}), \cref{main.hida-coinvariants} should be compared with the theory of the smooth canonical lift, and \cref{maincor.admissibility} should be compared to the admissibility of the Jacquet module of the smooth admissible $\GL_2(\mathbb{Q}_p)$-representation $S_k^{K^p}$. 
\end{remark}

\subsection{A conjectural generalization}\label{ss.conj-gen}
In \cref{s.conjecture}, we conjecture a generalization of \cref{maincor.admissibility} to analogous spaces of topological invariants for functions on the general Caraiani-Scholze Igusa varieties of \cite{CaraianiScholze.OnTheGenericPartOfTheCohomologyOfCompactUnitaryShimuraVarieties} (for simplicity, we are explicit only in the case of compact Shimura varieties). These spaces are associated to a Shimura datum $(G,X)$ and an element $b \in G(\Qpbreve)$ satisfying a compatibility condition with the Hodge cocharacter; we write $\Ig^b$ for the affine $p$-adic formal scheme associated to such a datum. There is a unipotent group $\tilde{U}_b$ which acts on $\Ig^b$, playing the role of $\tilde{\mu}_{p^\infty}$, and we define a notion of topological coinvariants for the $\tilde{U}_b$-action on the functions $\mathcal{O}(\Ig_b)$. Our conjecture is that, for any compact open subgroup $K^p \leq G(\mathbb{A}_f^{(p)})$,  $\mathcal{O}(\Ig_b)_{\tilde{U}_b}[1/p]^{K^p}$ is an admissible Banach representation of the associated group $G_b(\mathbb{Q}_p)$ (an inner form of a Levi subgroup of $G$) --- see \cref{conj.precise-coinvariants}. There is further evidence for this conjecture beyond \cref{maincor.admissibility} for the basic and $(\mu-)$ordinary Newton strata, i.e. in the two extremal cases (see \cref{remark.mu-ord-and-basic}). 

Beyond the admissibility conjecture, we also speculate on how the local-global compatibility in this putative admissible part may generalize from \cref{main.lg} and \cref{conj.lg-mf}. In particular, in \cref{ss.model-representations} we explain a local construction of $G_b(\mbb{Q}_p)$-representations as spaces of measures on local Shimura varieties that we expect to appear in such a local-global compatibility statement for regular cohomological automorphic forms as discussed in \cref{ss.conj-local-global-compatibility}; \cref{example.model-construction} shows this local construction recovers in a natural way the characters appearing in \cref{main.lg}.

\subsection{Outline}
In \cref{s.preliminaries} we recall various preliminaries. In \cref{s.rep-theory} we discuss the representation theory of $\tilde{\mu}_{p^\infty}, \mathbb{Z}_p(1),$ and $\mu_{p^\infty}$ with an emphasis on a dual interpretations via $p$-adic Fourier theory and its role in defining and studying coinvariants --- one interesting aspect of this study is that certain properties that at first may seem specific to our geometric setup are in fact general properties of such representations. 

In \cref{s.ordinary-igusa-varieties} we recall the definitions of various $\GL_2$ ordinary Igusa varieties and their group actions, as well as the associated function spaces appearing in our main results. The results of \cref{s.rep-theory} are used here to organize some of the local actions at $p$. 

In \cref{s.completed-kirillov} we set up and prove \cref{main.completion-kirillov-model} on the completed Kirillov model. In \cref{s.coinvariants-ordinary-lg} we make the link with Hida theory by proving \cref{main.hida-coinvariants}, then use this to deduce \cref{maincor.admissibility} on the admissibility of the coinvariants. In \cref{s.local-global-proof} we combine the previous results to give the proof of our local-global compatibility result, \cref{main.lg}.

In \cref{s.conjecture} we first make our general admissibility conjecture precise as \cref{conj.precise-coinvariants} then discuss complements on a general local-global compatibility conjecture. 

\subsection{Acknowledgements} We thank Andrew Graham and Pol van Hoften for helpful conversations related to \cref{conj.precise-coinvariants}. We also thank Pol van Hoften for helpful comments and suggestions on an earlier version and, in particular, for catching some mistakes in an earlier version of \cref{s.conjecture}! We thank Michael Harris and Nike Vatsal for encouraging us to resurrect this work. Sean Howe was supported by the National Science Foundation through grants DMS-2201112 and DMS-1704005.

\section{Preliminaries, notation, and normalizations}\label{s.preliminaries}

\subsection{Notation}
\subsubsection{Ad\`{e}les}
Let $\hat{\mathbb{Z}}=\lim_n \mathbb{Z}/n\mathbb{Z}$ be the profinite completion of $\mathbb{Z}$ and let $\mathbb{A}_f=\hat{\mathbb{Z}}\otimes_{\mathbb{Z}} \mathbb{Q}$ denote the finite ad\`{e}les of $\mathbb{Q}$. For $p$ a rational prime, let $\hat{\mathbb{Z}}^{(p)}=\lim_{(n,p)=1}\mathbb{Z}/n\mathbb{Z}$ and let $\mathbb{A}_f^{(p)}=\hat{\mathbb{Z}}^{(p)} \otimes_{\mathbb{Z}} \mathbb{Q}$ denote the prime-to-$p$ adeles. 

\subsubsection{$\GL_2$ and its subgroups}
We write $\GL_2$ for the group scheme over $\mathbb{Z}$ of $2 \times 2$ invertible matrices. We write $B \leq \GL_2$ for the subgroup scheme of invertible upper triangular matrices, $U \leq \GL_2$ for the subgroup scheme of upper triangular unipotent matrices, and $T \leq \GL_2$ for the subgroup scheme of invertible diagonal matrices. We write $B_1 \leq B$ for the mirabolic subgroup scheme consisting of matrices in $B$ with $1$ in the lower right entry. For $R$ a ring and $r,s\in R^\times$, we write 
\[ \diag(r,s):= \begin{bmatrix} r & 0 \\ 0 & s\end{bmatrix} \in T(R). \]

\subsubsection{Congruence subgroups}\label{sss.congruence-subgroups}
For $N \in \mathbb{Z}_{\geq 1}$, we write $K_1(N)$ for the subgroup of $\GL_2(\hat{\mathbb{Z}})$ consisting of matrices whose image in $\GL_2(\mathbb{Z}/N\mathbb{Z})$ lies in $B_1(\mathbb{Z}/N\mathbb{Z})$. For $p$ a rational prime and $N$ coprime to $p$, we write $K_1^{(p)}(N)$ for the subgroup of $\GL_2(\hat{\mathbb{Z}}^{(p)})$ consisting of matrices whose image in $\GL_2(\mathbb{Z}/N\mathbb{Z})$ lies in $B_1(\mathbb{Z}/N\mathbb{Z})$.

\subsubsection{Local class field theory}\label{sss.lcft}
For $\ell$ a rational prime, we normalize the isomorphism
$\mbb{Q}_\ell^\times \cong W_{\mbb{Q}_\ell}^{\mr{ab}}$
 of local class field theory so that the class of uniformizers $\ell\mathbb{Z}_\ell^\times \in \mathbb{Q}_\ell^\times/\mathbb{Z}_\ell^\times$ is mapped to the geometric Frobenius element in $\Gal(\overline{\mathbb{F}}_\ell/\mathbb{F}_\ell)$ (i.e. the inverse of the $\ell$-power arithmetic Frobenius automorphism $x \mapsto x^\ell$ of $\overline{\mathbb{F}}_\ell$). 

\subsection{Jacquet modules and Kirillov models}\label{ss.jacquet-kirillov}
Let $\ell$ be a rational prime and   let $C/\mathbb{Q}$ be algebraically closed. We recall here some standard results on the smooth representation theory of $\GL_2(\mbb{Q}_\ell)$ over $C$ (see, e.g., \cite[Chapter IV]{Bump.AutomorphicFormsAndRepresentations}).

We write $\delta$ for the modulus character of $B(\mbb{Q}_\ell)$ (which factors through $T(\mbb{Q}_\ell)$). For $v_\ell$ the $\ell$-adic valuation and  $|\cdot|=\ell^{-v_\ell(\cdot)}$ the usual $\ell$-adic absolute value,
\[ \delta\left( \begin{bmatrix} a & u\\ 0 & d \end{bmatrix} \right) = |a| |d|^{-1}. \]
We fix a squareroot of $\ell$ in $C$ in order to define $\delta^{1/2}$; the statements below do not depend on this choice. 

For $\chi_1$ and $\chi_2$ two characters $\mbb{Q}_{\ell}^\times \rightarrow C^\times$, we let $\epsilon_{1,2}: T(\mbb{Q}_p) \rightarrow C^\times$ be the character $\begin{bmatrix} a & 0 \\ 0 & d \end{bmatrix} \mapsto \chi_1(a)\chi_2(d)$, and we let $\mc{B}(\chi_1,\chi_2)$ be the normalized parabolic induction of $\epsilon_{1,2}$ from $B(\mbb{Q}_\ell)$ to $\GL_2(\mbb{Q}_\ell)$, i.e. the smooth parabolic induction of $\epsilon_{1,2}\delta^{1/2}$, i.e. the subspace of $C^\sm(\GL_2(\mbb{Q}_\ell), C)$ consisting of locally constant functions $f$ such that
\[ f\left(\begin{bmatrix}a & u \\ 0 & d\end{bmatrix} x\right) = \chi_1(a)\chi_2(d)|a|^{1/2}|d|^{-1/2} f(x), \]
equipped with the left $\GL_2(\mbb{Q}_\ell)$-action by right translation, $(g \cdot f)(x)=f(xg)$. 

We recall that $\mc{B}(\chi_1, \chi_2)$ is irreducible if and only $\chi_1|\cdot|^{\pm 1} \neq \chi_2$, and that in this case $\mc{B}(\chi_1,\chi_2)=\mc{B}(\chi_2,\chi_1)$; these representations are called the irreducible principal series. The representation $\mc{B}(\chi|\cdot |^{-1}, \chi)$ has a one-dimensional subrepresentation, whose associated character is $|\cdot|^{-1/2}\chi \circ \det$, and the quotient is irreducible; such quotients are called special. An infinite dimensional irreducible representation that is neither irreducible principal series or special is called supercuspidal. 

If $\pi$ is a smooth representation of $\GL_2(\mbb{Q}_\ell)$ on a $C$-vector space, we write $J(\pi)$ for the Jacquet module of $\pi$, i.e. for the $U(\mbb{Q}_\ell)$-coinvariants $\pi_{U(\mathbb{Q}_\ell)}$ viewed as a $T(\mbb{Q}_\ell)$ representation. If $\pi$ is admissible, then $J(\pi)$ is admissible by the theory of the canonical lift as in \cite[Proposition 4.1.4]{Casselman.OnRepresentationsOfGL2AndTheArithmeticOfModularCurves}.  

We now recall more specific results on the Jacquet modules of irreducible representations and their relation with Kirillov models (contained in, e.g., \cite[Chapter IV]{Bump.AutomorphicFormsAndRepresentations}). We write $C^\sm(\mathbb{Q}_p^\times, C)$ for the locally constant  functions on $\mathbb{Q}_p^\times$ with values in $C$ and $C_c^{\sm}(\mathbb{Q}_p^\times, C)$ for the subspace of compactly supported functions. 

\begin{theorem}\label{theorem.jaquetandkirillov}
    Suppose $\pi$ is a smooth irreducible representation of $\GL_2(\mbb{Q}_\ell)$. Then $\pi$ is admissible. Moreover,
    \begin{enumerate}
        \item If $\pi$ is finite dimensional, then $\pi$ is one-dimensional and the associated character can be written as $\chi \circ \det$ where $\chi: \mbb{Q}_\ell^\times \rightarrow C^\times$ is a character and $\det: \GL_2(\mbb{Q}_\ell) \rightarrow \mbb{Q}_\ell^\times$ is the determinant.
        \item If $\pi$ is infinite dimensional, then, for $\tau$ a non-trivial character of $U(\mbb{Q}_\ell)$ (which is canonically identified with the additive group $\mbb{Q}_\ell$), 
        \begin{enumerate}
        \item $\pi$ admits a unique \emph{Kirillov model} as a representation of $\GL_2(\mbb{Q}_\ell)$ on a subspace  
        \[ \pi^\Kir \subseteq C^{\sm}(\mbb{Q}_\ell^\times, C) \]
        such that the action of the mirabolic $B_1(\mbb{Q}_\ell)$ on $\pi^\Kir$ is described by 
        \[ \left( \begin{bmatrix} a & u \\ 0 & 1 \end{bmatrix} \cdot f\right) (x) = \tau(ux) f(ax). \]
        \item $C^{\sm}_c(\mbb{Q}_\ell^\times, C) \subseteq \pi^{\Kir},$ and $J(\pi)=\pi^\Kir/C^{\sm}_c(\mbb{Q}_\ell^\times, C)$.
        \item $\dim J(\pi)=2$ if $\pi$ is irreducible principal series, $\dim J(\pi)=1$ if $\pi$ is special, and $\dim J(\pi)=0$ if $\pi$ is supercuspidal. 
        \end{enumerate}
    \end{enumerate}
\end{theorem}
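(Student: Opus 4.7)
The plan is to handle admissibility and the one-dimensional case up front, then devote the bulk of the work to constructing the Kirillov model for infinite-dimensional irreducibles and computing the Jacquet module via Frobenius reciprocity. Admissibility follows from Jacquet's general theorem for smooth irreducible representations of reductive $p$-adic groups, or more directly by combining Schur's lemma (using countable dimension of $\pi^K$ for $K$ a small open compact, which yields a central character over algebraically closed $C$) with the Noetherianity of the spherical Hecke algebra acting on $\pi^K$. For finite-dimensional $\pi$, smoothness forces the action of $U(\mbb{Q}_\ell) \cong \mbb{Q}_\ell$ to factor through a finite discrete quotient, of which there are none, so $U$ acts trivially; the same holds for the opposite unipotent, hence $\SL_2(\mbb{Q}_\ell)$ acts trivially and $\pi$ factors via $\det$ as a character of $\mbb{Q}_\ell^\times$.

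For infinite-dimensional irreducible $\pi$, I would first build the Whittaker model via the twisted Jacquet module $\pi_{U,\tau} := \pi / \langle \pi(u)v - \tau(u)v\rangle$. Jacquet's lemma identifies the kernel of $\pi \twoheadrightarrow \pi_{U,\tau}$ as the subspace of vectors killed by averaging $\tau^{-1}$ over a sufficiently large compact open of $U(\mbb{Q}_\ell)$, which yields both exactness of $\pi \mapsto \pi_{U,\tau}$ and nonvanishing for some nontrivial $\tau$ (otherwise $U(\mbb{Q}_\ell)$ would act trivially, forcing $\pi$ finite-dimensional as above). Uniqueness of Whittaker models (the Gelfand--Kazhdan theorem; for $\GL_2$ provable directly from the Bruhat decomposition) gives $\dim \pi_{U,\tau} \leq 1$, producing a unique equivariant embedding $\pi \hookrightarrow \mathrm{Ind}_{U(\mbb{Q}_\ell)}^{\GL_2(\mbb{Q}_\ell)} \tau$ into smooth (non-compact) induction. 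Restricting Whittaker functions along $W \mapsto (a \mapsto W(\diag(a,1)))$ then yields the Kirillov embedding $\pi^{\Kir} \subseteq C^{\sm}(\mbb{Q}_\ell^\times, C)$ with the claimed mirabolic equivariance; injectivity follows from the fact that a Whittaker function on $\GL_2(\mbb{Q}_\ell)$ is determined by its restriction to the open Bruhat cell $B_1(\mbb{Q}_\ell) w U(\mbb{Q}_\ell)$.

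For the containment $C_c^{\sm}(\mbb{Q}_\ell^\times, C) \subseteq \pi^{\Kir}$ and the identification of the quotient with $J(\pi)$, I would appeal to Kirillov's theorem on the mirabolic: $\mathrm{ind}_{U(\mbb{Q}_\ell)}^{B_1(\mbb{Q}_\ell)} \tau \cong C_c^{\sm}(\mbb{Q}_\ell^\times, C)$ is the unique smooth irreducible infinite-dimensional representation of $B_1(\mbb{Q}_\ell)$, and it sits as a $B_1$-subrepresentation of $\pi^{\Kir}$; a direct computation identifies the cokernel with $J(\pi)$ as a $T(\mbb{Q}_\ell)$-module via asymptotic constant terms of $f \in \pi^{\Kir}$ near $0$ (and $\infty$ in the principal series case). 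Finally, the dimension count in (c) follows from Frobenius reciprocity
\[ \Hom_{\GL_2(\mbb{Q}_\ell)}(\pi, \mc{B}(\chi_1,\chi_2)) = \Hom_{T(\mbb{Q}_\ell)}(J(\pi), \epsilon_{1,2} \delta^{1/2}), \]
combined with the standard characterization of irreducibles by embeddings into parabolic inductions: vanishing of all such Homs defines the supercuspidals ($\dim J(\pi) = 0$), special representations produce exactly one pairing ($\dim J(\pi) = 1$), and irreducible principal series produces both characters via $\mc{B}(\chi_1,\chi_2) = \mc{B}(\chi_2,\chi_1)$ ($\dim J(\pi) = 2$). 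The main obstacle throughout is the uniqueness of the Whittaker model; given this, everything else is organizational.
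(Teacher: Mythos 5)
The paper does not give its own proof of this theorem; it recalls it as a standard result and cites \cite[Chapter IV]{Bump.AutomorphicFormsAndRepresentations}. Your sketch is a correct outline of that standard argument (twisted Jacquet module, Gelfand--Kazhdan uniqueness of Whittaker models, restriction to the torus, the mirabolic classification $\mathrm{ind}_{U}^{B_1}\tau \cong C_c^{\sm}(\mathbb{Q}_\ell^\times, C)$, and Frobenius reciprocity for the Jacquet module dimensions), and it matches the route taken in Bump, Jacquet--Langlands, and Bernstein--Zelevinsky. One small imprecision: your parenthetical justification for admissibility via ``Noetherianity of the spherical Hecke algebra acting on $\pi^K$'' is garbled as stated. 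The relevant fact is not Noetherianity of the spherical Hecke algebra but finiteness of $H(G,K)$ over its center (Bernstein), combined with Schur's lemma forcing the center to act by scalars; then $\pi^K = H(G,K)\cdot v$ is a finitely generated module over a ring acting by scalars, hence finite-dimensional. Alternatively, one can simply cite Jacquet's theorem that smooth irreducibles of reductive $p$-adic groups are admissible, or derive it for $\GL_2$ from the Kirillov model machinery itself (supercuspidal case via compact-mod-center support of matrix coefficients, non-supercuspidal case via subquotients of parabolic induction), which Bump does. None of this affects the correctness of the rest of the argument.
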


\begin{example}\label{example.kirillov-models} In particular, we find that an irreducible $\pi$ is supercuspidal if and only if $\pi^\Kir=C_{c}^\sm(\mbb{Q}_\ell^\times, C)$.  In the other cases, we can also describe $J(\pi)$ and $\pi^\Kir$ explicitly (see \cite[Theorems 4.7.2-4.7.3]{Bump.AutomorphicFormsAndRepresentations}). Below we write $1_{\mbb{Z}_\ell}$ for the indicator function of $\mbb{Z}_\ell$ in $\mbb{Q}_\ell$ (or rather, by abuse of notation, its restriction to $\mbb{Q}_\ell^\times$). 
\begin{enumerate}
\item Suppose $\pi=\mc{B}(\chi_1, \chi_2)$  and $\chi_1 \neq \chi_2$. Then $J(\pi)$ is the direct sum of 
\begin{align*} \epsilon=\epsilon_{1,2} \delta^{1/2}: \begin{bmatrix}a & 0 \\ 0 & d\end{bmatrix} &\mapsto \chi_1(a)|a|^{1/2} \chi_2(b)|b|^{-1/2} \textrm{ and }  \\ \epsilon'=\epsilon_{2,1} \delta^{1/2}: \begin{bmatrix}a & 0 \\ 0 & d\end{bmatrix} &\mapsto \chi_2(a)|a|^{1/2} \chi_1(b)|b|^{-1/2}. \end{align*}
In particular, if $\pi$ is irreducible, \cref{theorem.jaquetandkirillov}-(2).(b) implies,
\begin{align*} \pi^{\Kir}&= C^{\sm}_c(\mbb{Q}_\ell^\times, C) + C \cdot 1_{\mbb{Z}_\ell} \cdot \epsilon|_{\mbb{Q}_\ell^\times \times 1} + C \cdot 1_{\mbb{Z}_\ell} \cdot \epsilon'|_{\mbb{Q}_\ell^\times \times 1} \\
&= C^{\sm}_c(\mbb{Q}_\ell^\times, C) + C \cdot 1_{\mbb{Z}_\ell} \cdot \chi_1 \cdot |\cdot|^{1/2} + C \cdot 1_{\mbb{Z}_\ell} \cdot \chi_2 \cdot |\cdot|^{1/2}, \end{align*}
\item Suppose $\pi=\mc{B}(\chi,\chi)$. Then $J(\pi)$ is the non-trivial extension of $\epsilon_{1,2}| \cdot |^{1/2}$ by itself. Thus,
\[ \pi^{\Kir} =C^{\sm}_c(\mbb{Q}_\ell^\times, C)  +  C \cdot 1_{\mbb{Z}_\ell} \cdot \chi \cdot |\cdot|^{1/2} + C \cdot 1_{\mbb{Z}_\ell} \cdot v_\ell \cdot \chi \cdot |\cdot|^{1/2}. \]
\item From (1) and the definition of special representation, it follows that if $\pi$ is the special quotient of $\mc{B}(\chi|\cdot|^{-1}, \chi)$, then $J(\pi)$ is the character 
\[ \begin{bmatrix}a & 0 \\ 0 & d\end{bmatrix} \mapsto \chi(a)|a|^{1/2}\chi(b)|b|^{-3/2}. \]
and 
\[ \pi^\Kir=C^{\sm}_c(\mbb{Q}_\ell^\times, C)  +  C \cdot 1_{\mbb{Z}_\ell} \cdot \chi \cdot |\cdot|^{1/2}.\]
\end{enumerate}   
\end{example}

\subsection{Modular curves}\label{sss.modular-curves}
For $K \leq \GL_2(\mathbb{A}_f)$ a sufficiently small closed compact subgroup (not necessarily open!), we write $Y_K$ for the associated infinite level scheme-theoretic modular curve over $\mathbb{Q}$ parameterizing elliptic curves up to quasi-isogeny with a $K$-orbit of trivializations of their adelic Tate module as in \cite[\S3.1]{Howe.ThepAdicJacquetLanglandsCorrespondenceAndAQuestionOfSerre}.  We write $\omega_{E^\vee}$ for the sheaf on $Y_K$ of invariant differentials on the dual elliptic curve $E^\vee$ with its natural equivariant structure for the normalizer of $K$ (which differs by a twist from the natural equivariant structure on $\omega_E$). As in \cite[\S3.1]{Howe.ThepAdicJacquetLanglandsCorrespondenceAndAQuestionOfSerre}, when $K = \GL_2(\mathbb{Z}_p) \times K^p$ for $K^p \leq \GL_2(\mathbb{A}_f^{(p)})$ a closed compact subgroup, there is a natural integral model of $Y_K$ over $\mathbb{Z}_{(p)}$ parameterizing elliptic curves up to prime-to-$p$ quasi-isogeny with a $K^p$-orbit of trivializations of their adelic Tate modules, and $\omega_{E^\vee}$ extends in the obvious way as an equivariant sheaf for the normalizer of $K^p$ over this integral model. 

\subsection{The Tate curve}\label{ss.tate-curve}
Write $\Tate(q)$ for the Tate curve over $\bbZ((q))$ (see \cite[\S 8.8]{KatzMazur.ArithmeticModuliOfEllipticCurves} for the properties we will use; cf. also \cite[\S 3.2]{Howe.AUnipotentCircleActionOnpAdicModularForms}). We have a canonical identification $\Tate(q)^\wedge = \mbb{G}^\wedge_m$. Using the canonical principal polarization $\Tate(q)^\vee = \Tate(q)$ and transporting the differential $\frac{dt}{t}$ from $\hat{\mathbb{G}}_m$, we obtain a basis $\omega_{\can}$ for $\omega_{\Tate(q)^\vee}$. 

We write $\mathbb{Z}((q^{\mathbb{Q}_{>0}})):=\colim_n \mathbb{Z}((q^{1/n}))$. If we fix a compatible family $\zeta:=(\zeta_n)$ of primitive $n$th roots of unity in $\mathbb{Q}^\ab$, then we obtain a canonical associated trivialization
\[ \varphi_\zeta: \ul{\hat{\mathbb{Z}}^2} \xrightarrow{\sim} T_{\hat{\mathbb{Z}}}\left(\Tate(q)_{\mathbb{Q}^\ab \otimes_{\mathbb{Z}} \mathbb{Z}((q^{\mathbb{Q}_{>0}}))}\right) \]

We note that, if we fix a compatible family $\zeta^{(p)}:=(\zeta_n)_{p \nmid n}$ of prime-to-$p$ power primitive $n$th roots of unity, then we have a trivialization of the prime-to-$p$ Tate module already integrally, 
\[ \varphi_{\zeta^{(p)}}: \ul{\hat{\mathbb{Z}}^2} \xrightarrow{\sim} T_{\hat{\mathbb{Z}}}\left(\Tate(q)_{\mathbb{Z}_{(p)}[\zeta^{(p)}] \otimes_{\mathbb{Z}} \mathbb{Z}((q^{\mathbb{Z}_{(p),>0}}))}\right). \]

\subsection{Modular forms}
For $k \in \mathbb{Z}$, we consider the smooth $\GL_2(\mathbb{A}_f)$-representation $H^0(Y_{\{1\}}, \omega^k_{E^\vee})$ where $\{1\} \leq \GL_2(\mathbb{A}_f)$ is the trivial subgroup. We write $S_k$ for the usual cuspidal subrepresentation, consisting of those sections $s$ such that, for any compatible system of roots unity $\zeta$ as above and for any $g \in \GL_2(\mathbb{A}_f)$, the series obtained by evaluating $s$ on
$(\Tate(q), (\varphi_{\zeta}\otimes \mathbb{Q})g, \omega_\can)$ contains only positive powers of $q$. For any sufficiently small compact open subgroup $K \leq \GL_2(\mathbb{A}_f)$, $S_k^K$ is the space of sections of a line bundle on the smooth compactification of the affine curve $Y_{1}/K=Y_K$, thus $S_k$ is an admissible smooth representation of $\GL_2(\mathbb{A}_f)$. 

For $L/\mathbb{Q}$ any field extension, we write $S_{k,L}=S_{k} \otimes L$; equivalently, one can define $S_{k,L}$ by working everywhere above over $L$.

\subsubsection{Twists}\label{sss.mf-twists} By considering the Weil pairing associated to the unique elliptic curve in the isogeny class with integral Tate module $\hat{\mathbb{Z}}^2$ under the trivialization of the adelic Tate module evaluated on the standard basis vectors $e_1$ and $e_2$, we obtain a map $Y_{\{1\}} \rightarrow \hat{\mathbb{Z}}(1)$ that is $\GL_2(\mathbb{A}_f)$-equivariant when $\GL_2(\mathbb{A}_f)$ acts (on the right) on $\hat{\mathbb{Z}}(1)$ as multiplication by  $\tilde{\det} = \det \cdot |\det|$, where here $|(a_\ell)|=\prod_{\ell} |a_\ell|_\ell$, viewed as an element of $\mathbb{Q}^\times \subseteq \mathbb{A}_f^\times$. 

In particular, for $C/\mathbb{Q}$ algebraically closed, this induces a canonical identification $\pi_0(Y_{\{1\}, C})=\hat{\mathbb{Z}}(1)(C)^\times$. Now, for each smooth character $\kappa: \hat{\mathbb{Z}}^\times \rightarrow C$ there is a unique line $\ell_\kappa \subseteq C^\sm(\hat{\mathbb{Z}}(1)(C)^\times, C)$ consisting of functions $f$ such that $f(xz)=\kappa(z)f(x)$ for all $z \in \hat{\mathbb{Z}}^\times$. Given a subspace $V \subseteq S_k$, we write $V \otimes \kappa \subseteq S_k$ for the subspace $\ell_{\kappa} \cdot V$ (the elements of $\ell_\kappa$ pullback from $\pi_0$ to give locally constant functions on $Y_{\{1\}}$, and we multiply sections by these functions, which does not change the cuspidality condition). If we have fixed a basepoint $\zeta \in \hat{\mathbb{Z}}(1)(C)^\times$ (i.e. a compatible system of roots of unity in $C$), then there is a distinguished basis element $v_{\kappa,\zeta}$ of $\ell_\kappa$ defined by $v_{\kappa,\zeta}(\zeta x)=\kappa(x)$ and, for any $f \in S_{k,C}$, we write $f \otimes \kappa := v_{\kappa,\zeta} f$. We note, in particular, that if $V$ is a subrepresentation of $S_{k,C}$, then $V \otimes \kappa$ is isomorphic as a $\GL_2(\mathbb{A}_f)$ representation to the twist of $V$ by $\kappa \circ \tilde{\det}$.

\subsection{Galois representations associated to modular forms}\label{ss.galois-classical-modular-forms}
Let $C/\mathbb{Q}$ be algebraically closed, and suppose $\pi \in S_{k,C}$ is an irreducible $\GL_2(\mathbb{A}_f)$-subrepresentation. Then it admits a restricted tensor product decomposition $\pi=\bigotimes' \pi_\ell$ where $\pi_\ell$ is an irreducible representation of $\GL_2(\mathbb{Q}_\ell)$ that is spherical for almost all primes $\ell$. 
We recall that such a representation and all of its local components can be defined over any sufficiently large finite extension $L/\mathbb{Q}$. 

\subsubsection{} Given $f \in S_{k,C}^{K_1(N)}$, we obtain a $\mathbb{Q}$-expansion in $qC[[q]]$ by evaluation on 
\[ (\Tate(q), \varphi_\zeta, \omega_\can) \qquad \textit{(see \cref{ss.tate-curve} for this notation)} \]
for any choice of $\zeta$ (the $\varphi_\zeta$ associated to any two choices differ by an element of $K_1(N)$ thus give the same $q$-expansion). With our normalizations, the Hecke operators $T_\ell$, $\ell \nmid n$ and $U_\ell, \ell | N$ described by the usual formulas on $q$-expansions 
are given by $\frac{1}{\ell}$ times the double coset operators for $\begin{bmatrix} \ell & 0 \\ 0 & 1\end{bmatrix}$ --- this can be verified directly by the usual computation with Tate curves, as long as one is careful to use the correct $\omega_{E^\vee}$-equivariant structure rather than the $\omega_E$-equivariant structure (cf. \cite[\S2.2]{Howe.SlopeClassicalityInHigherColemanTheoryViaHighestWeightVectorsInCompletedCohomology} for an essentially equivalent computation). In particular, for $\ell | N$, 
\begin{equation}\label{eq.Uell-double-coset} U_\ell f = \frac{1}{\ell} \sum_{i=0}^{\ell -1} \begin{bmatrix} \ell & i \\ 0 & 1 \end{bmatrix} \cdot f. \end{equation}
The Hecke operator $S_\ell$ is given by $\frac{1}{\ell}$ times the action of $\begin{bmatrix} \ell & 0  \\ 0 & \ell\end{bmatrix}$. 

\newcommand{\WD}{\mathrm{WD}}
\subsubsection{} For $\lambda$ a finite place of $L$, let 
\[ \rho^{\lambda}=\rho^\lambda(\pi): \Gal(\overline{\mbb{Q}}/\mbb{Q}) \rightarrow \GL_2(L_\lambda) \]
be the Galois representation associated to the prime-to-$N$ Hecke eigenvalues of $\pi^{K_1(N)}$ for $N \gg 0$ as in \cite{Deligne.FormesModulairesEtRepresentationslAdiques}; in particular, it is constructed in the cohomology of $H^1(Y, \Sym^k R^1f_* L_{\lambda})$, where $f: E \rightarrow Y$ is the universal elliptic curve, and for $\ell \nmid n$, the geometric Frobenius element at $\ell$ has characteristic polynomial
\[ x^2 - T_\ell x + S_l. \]

\subsubsection{}\label{sss.wd-and-normalization} For each rational prime $\ell$, we write $\rho^{\lambda}_\ell$ for the restriction of $\rho^{\lambda}$ to a decomposition group at $\ell$. We write $\sigma_\ell=\WD(\rho^{\lambda}_\ell)^{F-\mathrm{ss}}$ for the associated Frobenius seimsiple Weil-Deligne representation on a $C$ vector space, which is independent of $\lambda$ by results of Deligne, Langlands \cite{Langlands.ModularFormsAndEllAdicRepresentations}, Carayol \cite{Carayol.SurLesRepresentationslAdiquesAssocieesAuxFormesModulairesDeHilbert}, and Saito \cite{Saito.ModularFormsAndpAdicHodgeTheory} (recall that when $\lambda |\ell$, the Weil-Deligne representation is constructed using the recipe of Fontaine \cite[1.3.5]{Fontaine.RepresentationsEllAdiquesPotentiellementSemiStables}, see also \cite[1.2]{Kisin.OverconvergentModularFormsAndTheFontaineMazurConjecture}). 

Our normalizations are such that, for $\pi_u$ the unitary local Langlands correspondence for $\GL_2(\mathbb{Q}_\ell)$ as in \cite{Deligne.FormesModulairesEtRepresentationsDeGL2},
\[ \pi_\ell = \pi_u(\sigma_\ell)\otimes |\det|_\ell^{-1/2}. \]
This normalization is adapted to comparison with the Kirillov model:
\begin{enumerate}
    \item If $\sigma_\ell$ is a direct sum of two distinct characters, then we have 
\begin{equation}\label{eq.jacquet-irreducible-principal-series} J(\pi_\ell)|_{\mathbb{Q}_\ell^\times \times 1} = \sigma_\ell\end{equation}
under the isomorphism of local class field theory (normalized as in \cref{sss.lcft}). Indeed, \[ \pi_u(\chi_1 \oplus \chi_2) \otimes |\det|_\ell^{-1/2}=\mathcal{B}(\chi_1,\chi_2)\otimes |\det|_\ell^{-1/2}=\mathcal{B}(\chi_1 |\cdot|_\ell^{-1/2}, \chi_2|\cdot|_\ell^{-1/2})\]
so that \cref{eq.jacquet-irreducible-principal-series} follows from \cref{example.kirillov-models}. 

    \item If $\sigma_\ell$ has non-trivial $N$-operator then
\begin{equation}\label{eq.jacquet-special} J(\pi_\ell)|_{\mathbb{Q}_\ell^\times \times 1} = \sigma_\ell^{N=0}. \end{equation}
    Indeed, for $\chi$ the character appearing in $\sigma_\ell^{N=0}$ (so that the action on $\sigma_\ell/\sigma_\ell^{N=0}$ is by $\chi| \cdot |_{\ell}^{-1}$), $\pi_u(\sigma_\ell)$ is the irreducible quotient of $\mathcal{B}(\chi|\cdot|^{-1}, \chi)$, so that $\pi_u(\sigma_\ell)\otimes|\det|_\ell^{-1/2}$ is the irreducible quotient of $\mathcal{B}(\chi|\cdot|_{\ell}^{-3/2}, \chi|\cdot|_\ell^{-1/2})$ and 
    \cref{eq.jacquet-special} follows from \cref{example.kirillov-models}. 
\end{enumerate}

\subsubsection{} Suppose $\kappa: \hat{\mathbb{Z}}^\times \rightarrow L_\lambda$ is a character. Then, for twists as in \cref{sss.mf-twists}, \[ \rho^\lambda(\pi \otimes \kappa) = \rho^\lambda(\pi) \otimes (\kappa \circ \epsilon) \]
 where 
 $\epsilon: \Gal(\overline{\mathbb{Q}}/\mathbb{Q}) \rightarrow \hat{\mathbb{Z}}^\times$
is the cyclotomic character. 

\subsubsection{}\label{sss.ordinary-vector}
Suppose now that $C$ is a complete extension of $L_\lambda$ and let $\rho$ be the extension of $\rho^\lambda$ from $L_\lambda$ to $C$. We say that $\pi$ is \emph{ordinary} if $J(\pi_p)$ contains a character $\epsilon_\ord$ such that $\epsilon_\ord\left(\diag(p,1)\right)$ has absolute value $1$. By the above considerations, this is equivalent to requiring that $\sigma_\ell$ contain a character with $|\sigma_\ell(p)|_p=1$. 

This character is necessarily unique: indeed, if $\sigma_p=\chi_1 \oplus \chi_2$, then $|\chi_1(p)\chi_2(p)|=|p^{k-1}|$ (for example, because the Hodge-Tate weights are $0$ and $k-1$) and $k \geq 2$. When it exists, we write $\chi_\ord = \epsilon_\ord|_{\mathbb{Q}_p^\times \times 1}$, which we may also view as a character of $\Gal(\overline{\mbb{Q}}_p/\mbb{Q}_p)$ appearing in $\sigma_\ell$. 

\begin{remark}\label{remark.canonical-lift-Kirillov-ordinary}
When $\pi$ is ordinary, it follows that, for $U_p$ the double coset operator of \cref{eq.Uell-double-coset}, the canonical lift of the $\epsilon_\ord$-line is a $U_p$-eigenline of unit eigenvalue $\chi_\ord(p)$. This explains the relation with the usual terminology for modular forms. In $\pi_p^\Kir$, this line is spanned by $1_{\mbb{Z}_p} \cdot \chi_\ord$ (cf. \cref{example.kirillov-models}).
\end{remark}

The ordinary character also appears in $\rho_p$:

\begin{lemma}\label{lemma.classical-ordinary}
    If $\pi$ is ordinary, then $\rho_p$ contains a unique invariant line where the Galois action is by the character $\chi_\ord$. On the other hand, if $\rho_p$ is reducible, then $\pi$ is ordinary. 
\end{lemma}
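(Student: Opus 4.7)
The plan is to deduce the forward direction from the classical theorem of Wiles on ordinary $\lambda$-adic representations attached to modular forms: if $\pi$ is ordinary at $p$, then $\rho_p$ is reducible, of an explicit upper-triangular form determined by the $U_p$-eigenvalue and central character of $\pi_p$. To identify the sub-character with $\chi_\ord$ under our normalizations, I would trace through the use of $\omega_{E^\vee}$ (rather than $\omega_E$) in the definition of $\rho$, the identity $\pi_p = \pi_u(\sigma_p) \otimes |\det|_p^{-1/2}$, the explicit Jacquet-module descriptions in \cref{eq.jacquet-irreducible-principal-series} and \cref{eq.jacquet-special}, and the geometric convention for local class field theory of \cref{sss.lcft}.

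For uniqueness, I would use that the Hodge-Tate weights of $\rho_p$ are $\{0, k-1\}$ with $k \geq 2$: any invariant line on which Galois acts by $\chi_\ord$ has Hodge-Tate weight equal to that of $\chi_\ord$, which is $0$ since $|\chi_\ord(p)|_p = 1$ forces $\chi_\ord$ to be potentially unramified. Two distinct such lines would give a direct sum decomposition $\rho_p \cong \chi_\ord^{\oplus 2}$ with both Hodge-Tate weights equal to $0$, contradicting $k \geq 2$.

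For the converse, assume $\rho_p$ is reducible, with Jordan--H\"older factors $\psi$ and $\psi'$. Each is de Rham, so $D_\mathrm{pst}$ of each is a weakly admissible rank-one filtered $(\varphi, N, G_{\mathbb{Q}_p})$-module; in rank one, weak admissibility forces the Newton slope to equal the Hodge-Tate weight, so $v_p(\psi(p))$ and $v_p(\psi'(p))$ lie in $\{0, k-1\}$ with exactly one of them equal to $0$. Taking the factor with unit slope, this character appears in $\sigma_p$ as a summand (in the non-Steinberg case, where $\sigma_p$ is a direct sum of two characters after Frobenius semisimplification) or as $\sigma_p^{N=0}$ (in the Steinberg case, where a short direct computation with the rank-one slopes of the $(\varphi, N)$-module forces $k=2$ and identifies the unit-slope character with the $N$-invariant subcharacter). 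In either case, by \cref{eq.jacquet-irreducible-principal-series} or \cref{eq.jacquet-special} this character appears in $J(\pi_p)|_{\mathbb{Q}_p^\times \times 1}$ with absolute value one on $p$, so $\pi$ is ordinary.

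The main obstacle will be the careful bookkeeping of normalizations to verify that the unit-slope character produced by Wiles' theorem (respectively, by the weak admissibility argument) is precisely $\chi_\ord$ as defined in \cref{sss.ordinary-vector}, rather than a twist thereof, accounting for the various shifts between the Galois side and the automorphic side, and that in the forward direction the line in question is indeed realized as a Galois-stable subspace (not just as a subquotient of $\sigma_p$) by compatibility of the weight filtration with $\omega_{E^\vee}$.
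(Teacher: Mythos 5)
Your proposal is correct and follows essentially the same strategy as the paper: Wiles' theorem for the forward direction, and a slope/Hodge--Tate weight argument via weak admissibility for the converse. The two proofs diverge in details worth noting. For the forward direction, you identify the normalization bookkeeping as the ``main obstacle'' but don't propose how to resolve it; the paper sidesteps this cleanly by first twisting $\pi$ so that $\chi_\ord|_{\mathbb{Z}_p^\times}=1$ (which corresponds to the analogous twist of $\rho$), at which point the canonical lift of $\epsilon_\ord$ is literally a newform with unit $U_p$-eigenvalue $\chi_\ord(p)$ and Wiles applies verbatim, with only the harmless swap of arithmetic and geometric Frobenius to track. Your uniqueness argument (two such lines would force $\rho_p\cong\chi_\ord^{\oplus 2}$, contradicting the Hodge--Tate weights $\{0,k-1\}$ with $k\geq 2$) matches the paper's. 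For the converse, the paper takes an actual Galois-stable line $\ell\subseteq\rho_p$ and case-splits on its Hodge--Tate weight (if $0$, one is done; if $k-1$, one argues $\WD(q)$ splits off as a summand because the slopes $0$ and $k-1$ preclude a nontrivial extension of WD representations), whereas you work with the Jordan--H\"older factors directly, identify the unit-slope one via rank-one weak admissibility, and case-split on Steinberg vs.\ not, arriving at $\sigma_p$ via $F$-semisimplicity or $\sigma_p^{N=0}$ after noting $k=2$. Both organizations are valid; yours avoids the explicit extension-splitting argument but instead needs to implicitly rule out $\pi_p$ supercuspidal (where $\sigma_p$ is irreducible, contradicting reducibility of $\rho_p$) and to observe that in the principal-series case the two unramified-twists cannot coincide (which again follows from the slope constraint), and the phrase ``rank-one slopes of the $(\varphi,N)$-module'' is a slight abuse since the module is rank two --- you mean the slopes of its rank-one $\varphi$-stable constituents.
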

\begin{proof}
Suppose $\pi$ is ordinary. Then, there is a unique twist $\tilde{\pi}$ of $\pi$ such that $\chi_\ord|_{\mathbb{Z}_p^\times}=1$. Since this corresponds to a twist of the Galois representation, we may replace $\pi$ with $\tilde{\pi}$, so it suffices to assume $\chi_\ord|_{\mathbb{Z}_p^\times}=1$. In this case, the canonical lift of $\epsilon_\ord$ is a new vector with unit $U_p$-eigenvalue $\chi_\ord(p)$. We can thus apply \cite[Theorem 2]{Wiles.OnOrdinaryLambdaAdicRepresentationsAssociatedToModularForms} (note that our $\rho_p$ is the dual of the representation considered in loc cit., which can be seen as the eigenvalues of arithmetic and geometric Frobenius elements are swapped) to see that $\rho_p$ is reducible with an unramified line where geometric Frobenius acts by $\chi_\ord(p)$. We conclude because this line is unique since the Hodge-Tate weights are $0$ and $k-1$. 

Conversely, suppose $\rho_p$ is reducible and let $\ell$ be a one-dimensional subrepresentation with quotient $q$. The Hodge-Tate weight of $\ell$ must be $0$ or $k-1$. If it is zero then $\ell$ is unramified and $\ell=\WD(\ell)\subseteq \WD(\rho_p)$ is an ordinary line. If it is $k-1$, then, we note that  $\WD(\rho_p)$ is an extension of  $q=\WD(q)$ (which is potentially unramified since it is of Hodge-Tate weight zero) by $\WD(\ell)$. Writing $\chi$ for the character on $\WD(\ell)$, we find $|\chi(p)|=|p^{k-1}|$, so there is no non-trivial extension of $\WD(q)$ by $\WD(\ell)$. Thus $\WD(q)$ is contained in $\WD(\rho_p)$ and gives an ordinary line.  
\end{proof}

\begin{remark}
    As we will recall in \cref{ss.Galois-reps-ordinary-forms}, there is a generalization of \cref{lemma.classical-ordinary} to Hida-ordinary $p$-adic modular forms of arbitrary weight due to Kisin \cite[6.13]{Kisin.OverconvergentModularFormsAndTheFontaineMazurConjecture}.  
\end{remark}

\section{Representation theory of $\mathbb{Z}_p(1)$, $\mu_{p^\infty}$, and $\tilde{\mu}_{p^\infty}$}\label{s.rep-theory}

In this section we discuss the representation theory of $\mathbb{Z}_p(1)$, $\mu_{p^\infty}$, and $\tilde{\mu}_{p^\infty}$ using integral $p$-adic Fourier theory. This is a very special case compared to other $p$-divisible groups, as the dual groups  $\mathbb{Q}_p/\mathbb{Z}_p$, $\mathbb{Z}_p$, and $\mathbb{Q}_p$ are all locally profinite, so that we can decompose such representations explicitly by viewing them as sheaves on locally profinite sets. Using this interpretation, we define topological coinvariants for $\mu_{p^\infty}$ and $\tilde{\mu}_{p^\infty}$-representations, and relate these two notions (\cref{lemma.topological-coinvariants-agree}). 

In later sections, we will apply this study to functions on ordinary Igusa varieties. One interesting consequence of the general study is that the density of ``finite level" (Mantovan) functions in the ``infinite level" (Caraiani-Scholze)  functions is in fact a general feature of representations of $\mathbb{Z}_p(1)$ (see \cref{cor.density-zp1}). 

\subsection{Topological $\mathbb{Z}_p$-modules and Hopf algebras}
By a topological $\mathbb{Z}_p$-module, we mean a linearly topologized $\mathbb{Z}_p$-module $M$ whose topology is coarser than the $p$-adic topology (i.e., if $U$ is an open neighborhood of the identity, then $p^n M \subseteq U$ for $n \gg 0$). We say $M$ is complete if $M = \varprojlim M/U$, where $U$ runs over the open sub-modules. We will consider the completed tensor product $M \hat{\otimes}_{\mathbb{Z}_p} N$ of topological $\mathbb{Z}_p$-modules as in \cite[\href{https://stacks.math.columbia.edu/tag/0AMU}{Tag 0AMU}]{stacks-project}. 

A complete topological Hopf algebra over $\mathbb{Z}_p$ is a complete topological $\mathbb{Z}_p$-algebra $A$ equipped with a continuous counit $A \rightarrow \mathbb{Z}_p$ and comultiplication $A \rightarrow A \hat{\otimes}_{\mathbb{Z}_p} A$  satisfying the usual identities and admitting a coinverse.

\subsection{Some $p$-adic Fourier theory}

We consider the $p$-divisible group $\mu_{p^\infty}=\varinjlim_n \mu_{p^n}$ over $\Spf \mathbb{Z}_p$ (viewed as a functor on $\Nilp_{\mathbb{Z}_p}$, the category of $\mathbb{Z}_p$-algebras where $p$ is nilpotent). We write $\mathbb{Z}_p(1)=T_p \mu_{p^\infty}=\varprojlim_n \mu_{p^n}$ for its Tate module. The Cartier dual of the finite flat group scheme $\mu_{p^n}$ is $(\mathbb{Q}_p/\mathbb{Z}_p)[p^n]=\frac{1}{p^n}\mathbb{Z}_p/\mathbb{Z}_p$, and the explicit description of Cartier duality in terms of Hopf algebras then implies the following integral $p$-adic Fourier theory (a special case of \cite[Theorem 7.0.1]{GrahamVanHoftenHowe.FourierTheory}). In the statement, for $M$ a topological $\mathbb{Z}_p$-module, we write $M^*=\Hom_{C^0,\mathbb{Z}_p}(M, \mathbb{Z}_p)$ for the space of continuous functionals on $M$, equipped with the weak topology.  

\begin{proposition}\label{prop.p-adic-fourier-theory}
Integration over the universal characters 
\[ \mu_{p^\infty} \times \mathbb{Z}_p \rightarrow \mu_{p^\infty} \textrm{ and } \mathbb{Z}_p(1) \times \mathbb{Q}_p/\mathbb{Z}_p \rightarrow \mu_{p^\infty}\]
induces isomorphisms of complete topological Hopf algebras over $\mathbb{Z}_p$:
\[ \mathcal{O}(\mu_{p^\infty})^*=\mathcal{O}(\mathbb{Z}_p)=C(\mathbb{Z}_p, \mathbb{Z}_p) \textrm{ and } 
   \mathcal{O}(\mathbb{Z}_p(1))^*=\mathcal{O}(\mathbb{Q}_p/\mathbb{Z}_p)=C(\mathbb{Q}_p/\mathbb{Z}_p, \mathbb{Z}_p). \]
\end{proposition}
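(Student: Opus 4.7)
The plan is to derive both isomorphisms from Cartier duality between the finite flat group schemes $\mu_{p^n}$ and $\mathbb{Z}/p^n\mathbb{Z}$ over $\mathbb{Z}_p$, by passing to appropriate (co)limits; the only real subtlety is a completion step in the first isomorphism that appeals to Mahler's theorem. At finite level, Cartier duality supplies canonical Hopf algebra isomorphisms $\mathcal{O}(\mu_{p^n})^* \cong \mathcal{O}(\mathbb{Z}/p^n\mathbb{Z})$ induced by the pairing $(t, a) \mapsto t^a$.

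For the second isomorphism, write $\mathbb{Z}_p(1) = \varprojlim_n \mu_{p^n}$ along the $[p]$-maps, so $\mathcal{O}(\mathbb{Z}_p(1)) = \varinjlim_n \mathcal{O}(\mu_{p^n})$ with the inductive limit topology. Taking continuous $\mathbb{Z}_p$-duals turns this into $\varprojlim_n \mathcal{O}(\mathbb{Z}/p^n\mathbb{Z})$, where the transition maps are the surjective restrictions dual to the multiplication-by-$p$ inclusions $\mathbb{Z}/p^n\mathbb{Z} \hookrightarrow \mathbb{Z}/p^{n+1}\mathbb{Z}$. Since $\mathbb{Q}_p/\mathbb{Z}_p$ is discrete, this inverse limit is exactly the space of all functions $\mathbb{Q}_p/\mathbb{Z}_p \to \mathbb{Z}_p$, i.e.\ $C(\mathbb{Q}_p/\mathbb{Z}_p, \mathbb{Z}_p)$; the Hopf algebra compatibilities are inherited directly from the finite level.

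For the first isomorphism, identify $\mathcal{O}(\mu_{p^\infty}) = \mathbb{Z}_p[[T]]$ with the $(p, T)$-adic topology, where $1 + T = t$ is the coordinate on $\mathbb{G}_m$. A continuous $\mathbb{Z}_p$-linear functional $\phi: \mathbb{Z}_p[[T]] \to \mathbb{Z}_p$ is uniquely determined by, and equivalent to, the sequence $a_j := \phi(T^j)$ with $v_p(a_j) \to \infty$ (one direction is clear, the other uses $T^j \in (p,T)^j$). By Mahler's theorem, such sequences are in canonical bijection with $C(\mathbb{Z}_p, \mathbb{Z}_p)$ via the Mahler expansion $f(a) = \sum_j a_j \binom{a}{j}$, and since $(1+T)^a = \sum_j \binom{a}{j} T^j$ in $\mathbb{Z}_p[[T]]$ for $a \in \mathbb{Z}_p$, we have $\phi((1+T)^a) = f(a)$; so the map is exactly integration against the universal character $(t,a) \mapsto t^a$. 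The Hopf algebra compatibility follows from the group-like identity $\Delta(1+T) = (1+T) \otimes (1+T)$: the induced product on the dual becomes pointwise multiplication of functions on $\mathbb{Z}_p$, and the induced coproduct becomes the comultiplication on $C(\mathbb{Z}_p, \mathbb{Z}_p)$ dual to addition on $\mathbb{Z}_p$.

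The main technical content is this completion step in the first isomorphism: a naive finite-level duality by itself would identify $\mathcal{O}(\mu_{p^\infty})^*$ only with the dense locally constant subalgebra $C^{\mathrm{sm}}(\mathbb{Z}_p, \mathbb{Z}_p) \subset C(\mathbb{Z}_p, \mathbb{Z}_p)$, and it is precisely Mahler's theorem (via the $p$-adic decay of Mahler coefficients) that enlarges this to all continuous functions — a feature reflecting the opposite topological roles of $\mathcal{O}(\mu_{p^\infty})$ (pro-finite) and $\mathcal{O}(\mathbb{Z}_p(1))$ (ind-finite) in the two cases.
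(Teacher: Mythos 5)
Your argument is correct, and it is essentially the approach the paper points to. The paper's own ``proof'' is a citation: it invokes finite-level Cartier duality between $\mu_{p^n}$ and $\tfrac{1}{p^n}\mathbb{Z}/\mathbb{Z}$ and then declares the proposition a special case of a theorem in the Graham--van~Hoften--Howe Fourier theory paper, with a follow-up remark observing that the first isomorphism ``can also be seen using the M\"ahler basis'' and flagging that the topology on $\mathcal{O}(\mu_{p^\infty})$ is not $p$-adic. Your write-up fills in exactly what that citation covers: you prove the second isomorphism by passing finite-level Cartier duality through the (straightforward, since $\mathcal{O}(\mathbb{Z}_p(1))$ carries the $p$-adic topology) limit, and you correctly identify that this limit argument alone would only produce the locally constant functions $C^{\mathrm{sm}}(\mathbb{Z}_p,\mathbb{Z}_p)$ in the first case, so that the full statement requires identifying the continuous dual of $\mathbb{Z}_p[[T]]$ (with its $(p,T)$-adic, non-$p$-adic topology) with $C(\mathbb{Z}_p,\mathbb{Z}_p)$ via Mahler's theorem. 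This is precisely the subtlety the paper's remark is pointing at, so the two routes coincide; yours is simply self-contained where the paper defers to an external reference.
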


\begin{remark}
    The first isomorphism in \cref{prop.p-adic-fourier-theory} can also be seen using the M\"ahler basis as in \cite[Examples 7.0.3-4]{GrahamVanHoftenHowe.FourierTheory}; note here that the topology on $\mathcal{O}(\mu_{p^\infty})$ is not $p$-adic! By \cite[Proposition 7.0.2]{GrahamVanHoftenHowe.FourierTheory}, this isomorphism also exchanges multiplication by the coordinate function $z \in C(\mathbb{Z}_p, \mathbb{Z}_p)$ with the dual of the action of the invariant derivation $t\partial_t$ on $\mathcal{O}(\mu_{p^\infty})$.  
\end{remark}

We will also consider the universal cover 
\[ \tilde{\mu}_{p^\infty} = \varprojlim_n \mu_{p^\infty} \]
where the transition maps are multiplication by $p$. We have a natural inclusion 
\[ \mathbb{Z}_p(1)=\varprojlim_{n} \mu_{p^n} \hookrightarrow \tilde{\mu}_{p^\infty} \]
that extends to an exact sequence of fpqc-sheaves of abelian groups on $\Nilp_{\mathbb{Z}_p}$, 
\begin{equation}\label{eq.ses} 1 \rightarrow \mathbb{Z}_p(1) \rightarrow \tilde{\mu}_{p^\infty} \rightarrow \mu_{p^\infty} \rightarrow 1 \end{equation}
by projecting from $\tilde{\mu}_{p^\infty}$ onto the first component in the limit.

We note the inclusion $\mathbb{Z}_p(1) \rightarrow \tilde{\mu}_{p^\infty}$ also induces an isomorphism
\[ \mathbb{Z}_p(1)[1/p] = \tilde{\mu}_{p^\infty}. \]
Since $\mathbb{Z}_p(1)[1/p]=\varinjlim_n \frac{1}{p^n}\mathbb{Z}_p(1)$, we can use this to compute (by taking maps to $\mathbb{A}^1$ and applying the universal property of colimits)
\[ \mathcal{O}(\tilde{\mu}_{p^\infty})= \varprojlim_n \mathcal{O}\left(\frac{1}{p^n}\mathbb{Z}_p(1)\right). \]
We equip $\mathcal{O}(\tilde{\mu}_{p^\infty})$ with the inverse limit topology so that it is naturally a complete topological Hopf algebra over $\mathbb{Z}_p$.  

\begin{proposition}\label{prop.universal-cover-duals}
There is a natural isomorphism 
\[ \mathcal{O}(\tilde{\mu}_{p^\infty})^* = C(\mathbb{Q}_p, \mathbb{Z}_p) \]
\end{proposition}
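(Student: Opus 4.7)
The plan is to extend the $p$-adic Fourier theory of Proposition~\ref{prop.p-adic-fourier-theory} to $\tilde{\mu}_{p^\infty}$ by constructing an explicit universal character parameterized by $\mathbb{Q}_p$, then verifying the resulting isomorphism level-by-level along the filtration $\tilde{\mu}_{p^\infty} = \bigcup_n \tfrac{1}{p^n}\mathbb{Z}_p(1)$.

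\emph{Step 1: The universal character.} First I would construct a pairing $\tilde{\mu}_{p^\infty} \times \mathbb{Q}_p \to \mu_{p^\infty}$, $(\tilde{\zeta}, a) \mapsto \tilde{\zeta}^a$, as follows. Writing $\tilde{\zeta} = (x_k)_{k \geq 0}$ with $x_k \in \mu_{p^\infty}$ and $x_{k+1}^p = x_k$, and $a = p^{-n}c$ with $c \in \mathbb{Z}_p$ and $n \geq 0$, set $\tilde{\zeta}^a := x_n^c$. This is independent of the presentation $a = p^{-n}c$, since replacing $n$ by $n+1$ yields $x_{n+1}^{pc} = (x_{n+1}^p)^c = x_n^c$. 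Restricted to $\mathbb{Z}_p(1) = \ker(\tilde{\mu}_{p^\infty} \to \mu_{p^\infty})$, which consists of those $\tilde{\zeta}$ with $x_0 = 1$, the pairing depends on $a$ only modulo $\mathbb{Z}_p$ and recovers the universal character of Proposition~\ref{prop.p-adic-fourier-theory} for $\mathbb{Z}_p(1) \times \mathbb{Q}_p/\mathbb{Z}_p$; restricted via the projection $\tilde{\mu}_{p^\infty} \twoheadrightarrow \mu_{p^\infty}$ and the inclusion $\mathbb{Z}_p \hookrightarrow \mathbb{Q}_p$, it recovers the other universal character of that proposition.

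\emph{Step 2: The Fourier map.} Integration against the universal character defines a natural map of complete topological Hopf algebras
\[ C(\mathbb{Q}_p, \mathbb{Z}_p) \longrightarrow \mathcal{O}(\tilde{\mu}_{p^\infty})^*, \]
sending $f \in C(\mathbb{Q}_p, \mathbb{Z}_p)$ to the functional whose value on a character $\phi_a(\tilde{\zeta}) = \tilde{\zeta}^a$ is $f(a)$, extended by $\mathbb{Z}_p$-linearity and continuity. I would show this is an isomorphism.

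\emph{Step 3: Reduction to Proposition~\ref{prop.p-adic-fourier-theory}.} Using $\mathcal{O}(\tilde{\mu}_{p^\infty}) = \varprojlim_n \mathcal{O}(\tfrac{1}{p^n}\mathbb{Z}_p(1))$ from the paragraph preceding the proposition, and the dual filtration $\mathbb{Q}_p = \bigcup_n p^{-n}\mathbb{Z}_p$, the verification is carried out at each level. For each $n$, the annihilator of $\tfrac{1}{p^n}\mathbb{Z}_p(1)$ in $\mathbb{Q}_p$ (under the pairing above) is precisely $p^n\mathbb{Z}_p$, so the restricted pairing factors as $\tfrac{1}{p^n}\mathbb{Z}_p(1) \times \mathbb{Q}_p/p^n\mathbb{Z}_p \to \mu_{p^\infty}$. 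Via the isomorphism $\tfrac{1}{p^n}\mathbb{Z}_p(1) \xrightarrow{p^n} \mathbb{Z}_p(1)$ and the corresponding rescaling on the parameter side, Proposition~\ref{prop.p-adic-fourier-theory} identifies $\mathcal{O}(\tfrac{1}{p^n}\mathbb{Z}_p(1))^*$ with $C(\mathbb{Q}_p/p^n\mathbb{Z}_p, \mathbb{Z}_p)$, and the transition maps (dual to the surjective restrictions $\mathcal{O}(\tfrac{1}{p^{n+1}}\mathbb{Z}_p(1)) \twoheadrightarrow \mathcal{O}(\tfrac{1}{p^n}\mathbb{Z}_p(1))$) become pullback along the quotient $\mathbb{Q}_p/p^{n+1}\mathbb{Z}_p \twoheadrightarrow \mathbb{Q}_p/p^n\mathbb{Z}_p$. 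Passing to the limit, $\mathcal{O}(\tilde{\mu}_{p^\infty})^*$ is thereby realized as a completion of $\varinjlim_n C(\mathbb{Q}_p/p^n\mathbb{Z}_p, \mathbb{Z}_p)$, which sits inside $C(\mathbb{Q}_p, \mathbb{Z}_p)$ as the subspace of continuous functions periodic modulo some $p^n\mathbb{Z}_p$.

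The main obstacle is the topological bookkeeping at the end of Step~3: one must verify that the weak topology on $\mathcal{O}(\tilde{\mu}_{p^\infty})^*$ coming from the inverse limit presentation of $\mathcal{O}(\tilde{\mu}_{p^\infty})$ matches, on the dense subspace $\varinjlim_n C(\mathbb{Q}_p/p^n\mathbb{Z}_p, \mathbb{Z}_p)$ of eventually-periodic functions, the topology whose completion is $C(\mathbb{Q}_p, \mathbb{Z}_p) = \varprojlim_n C(p^{-n}\mathbb{Z}_p, \mathbb{Z}_p)$. This is a topological completion step strictly analogous to, and no harder than, the one implicit in Proposition~\ref{prop.p-adic-fourier-theory} (where the direct limit of finite-rank duals is completed to obtain the Banach or ind-Banach spaces $C(\mathbb{Z}_p, \mathbb{Z}_p)$ and $C(\mathbb{Q}_p/\mathbb{Z}_p, \mathbb{Z}_p)$), and should follow by a direct computation once the universal character in Step~1 is in hand.
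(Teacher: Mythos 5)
Your construction of the universal character in Step 1 and the resulting Fourier map in Step 2 are correct and consistent with what the paper does implicitly; however, Step 3 departs from the paper's proof in a way that matters, and the "topological bookkeeping" you flag at the end is in fact a substantive gap rather than a routine verification.

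The paper's computation goes through the \emph{complementary} filtration. It uses the presentation $\tilde{\mu}_{p^\infty} = \varprojlim_n \mu_{p^\infty}$ (transition maps $\times p$), so that $\mathcal{O}(\tilde{\mu}_{p^\infty})$ is a \emph{completed colimit} of copies of $\mathcal{O}(\mu_{p^\infty})$. Since a continuous functional into a complete target is the same on a topological module and on its completion, and $\Hom$ out of a colimit is a limit, dualizing mod $p^k$ gives
\[
\Hom_{\mathbb{Z}_p}\big(\mathcal{O}(\tilde{\mu}_{p^\infty}),\mathbb{Z}/p^k\big) \;=\; \varprojlim_n \Hom\big(\mathcal{O}(\mu_{p^\infty}),\mathbb{Z}/p^k\big) \;=\; \varprojlim_n C\big(\tfrac{1}{p^n}\mathbb{Z}_p,\mathbb{Z}/p^k\big),
\]
the transition maps being restrictions along the inclusions $p^{-n}\mathbb{Z}_p \hookrightarrow p^{-(n+1)}\mathbb{Z}_p$. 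This inverse limit is \emph{literally} $C(\mathbb{Q}_p,\mathbb{Z}/p^k)$ by the universal property of the union $\mathbb{Q}_p = \bigcup_n p^{-n}\mathbb{Z}_p$, and then $\varprojlim_k$ gives $C(\mathbb{Q}_p,\mathbb{Z}_p)$. The $p$-adic hypothesis on $\mathcal{O}(\mathbb{Z}_p(1))$ is used precisely to justify commuting $\Hom$ into $\mathbb{Z}_p$ with $\varprojlim_k$; after that the computation is a chain of equalities and there is no completion to take.

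You instead filter by $\varinjlim_n \tfrac{1}{p^n}\mathbb{Z}_p(1)$, so $\mathcal{O}(\tilde{\mu}_{p^\infty}) = \varprojlim_n \mathcal{O}(\tfrac{1}{p^n}\mathbb{Z}_p(1))$ and the naive level-by-level dual is the \emph{direct} limit $\varinjlim_n C(\mathbb{Q}_p/p^n\mathbb{Z}_p,\mathbb{Z}_p)$ — the eventually-periodic functions — which is a strictly smaller, merely dense, subspace of $C(\mathbb{Q}_p,\mathbb{Z}_p)$. At this point you assert that $\mathcal{O}(\tilde{\mu}_{p^\infty})^*$ is "a completion" of this direct limit. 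That claim is not formal: the continuous dual of an inverse limit of topological modules is not in general the topological completion of the direct limit of duals; and $M^*$ is by definition a set of continuous functionals, not a completion. To make your route rigorous one must, as in the paper, reduce to $\mathbb{Z}/p^k$-coefficients \emph{before} dualizing — this is exactly where the $p$-adic hypothesis on $\mathcal{O}(\mathbb{Z}_p(1))$ enters — and then show that the resulting $\varprojlim_k \varinjlim_n$ assembles to $C(\mathbb{Q}_p,\mathbb{Z}_p)$; this is the real content hiding in your last paragraph, not a topology-matching check. Alternatively, you could avoid the level-by-level assembly entirely and argue directly that pairing against the universal character of Step 1 sends a continuous functional to a continuous function and is inverse to the Fourier map of Step 2 (using that group-like elements topologically span $\mathcal{O}(\tilde{\mu}_{p^\infty})$); this shortcuts the issue at the cost of a separate density statement. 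Either way, the approach as written defers a genuine step, and switching to the paper's filtration makes it disappear.
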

\begin{proof}
    Since the topology on $\mathcal{O}(\mathbb{Z}_p(1))$ is $p$-adic, we find
\begin{align*} \mathcal{O}(\tilde{\mu}_{p^\infty})^* &= \varprojlim_k  \Hom_{\mathbb{Z}_p}(\varinjlim_n \mathcal{O}(1/p^n \mathbb{Z}_p(1)), \mathbb{Z}/p^k\mathbb{Z})\\
&=\varprojlim_k \varprojlim_n 
C\left(\frac{1}{p^n}\mathbb{Z}_p, \mathbb{Z}/p^k\right)\\
&= C(\mathbb{Q}_p, \mathbb{Z}_p).\end{align*}
\end{proof}

\begin{remark} Integral $p$-adic Fourier theory for universal covers of more general $p$-divisible groups, and its relation to an analytic Fourier theory for Banach-Colmez spaces, will be discussed in greater generality in \cite{GrahamVanHoftenHowe.TowardsAFourierTheoryForBanachColmezSpaces}.
\end{remark}

\subsection{Representations of $\mathbb{Z}_p(1)$}

\begin{definition}\hfill
\begin{itemize}
\item A $\mathbb{Z}_p(1)$-representation is a complete topological $\mathbb{Z}_p$-module $M$ with a co-action 
    \[ M \rightarrow M \hat{\otimes}_{\mathbb{Z}_p
    } \mathcal{O}(\mathbb{Z}_p(1)). \]
\item Given a $\mathbb{Z}_p(1)$-representation $M$, the invariants $M^{\mathbb{Z}_p(1)}$ are the preimage of $M = M \hat{\otimes} \mathbb{Z}_p \subseteq M \hat{\otimes}_{\mathbb{Z}_p
    } \mathcal{O}(\mathbb{Z}_p(1)) $ under the co-action map. 
\end{itemize}
\end{definition}

Given a $\mathbb{Z}_p(1)$-representation $M$, we obtain a $C(\mathbb{Q}_p/\mathbb{Z}_p, \mathbb{Z}_p)=\mathcal{O}(\mathbb{Z}_p(1))^*$-action on $M$ by contracting with the co-action. The following lemma uses the discreteness of $\mathbb{Q}_p/\mathbb{Z}_p$ to decompose  
$\mathbb{Z}_p(1)$-representations into isotypic subspaces. For $u \in \mathbb{Q}_p/\mathbb{Z}_p$, we write    $1_{u}\in C(\mathbb{Q}_p/\mathbb{Z}_p, \mathbb{Z}_p)$ for the indicator function of $u$.

\begin{proposition}
    Let $M$ be a $\mathbb{Z}_p(1)$-representation. Then, for $u \in \mathbb{Q}_p/\mathbb{Z}_p$, $\mathbb{Z}_p(1)$ acts by the character $u$ on $1_u \cdot M$, and 
    \[ M = \left(\oplus_{u \in \mathbb{Q}_p/\mathbb{Z}_p} 1_{u} \cdot M\right)^\wedge \]

\end{proposition}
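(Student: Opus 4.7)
The plan is to leverage the $p$-adic Fourier duality from \cref{prop.p-adic-fourier-theory} to transport the co-action into an action of the dual ring $C(\mathbb{Q}_p/\mathbb{Z}_p, \mathbb{Z}_p) = \mathcal{O}(\mathbb{Z}_p(1))^*$ on $M$ by contraction, and then exploit the fact that $\mathbb{Q}_p/\mathbb{Z}_p = \bigcup_n \tfrac{1}{p^n}\mathbb{Z}_p/\mathbb{Z}_p$ is discrete so that the indicators $\{1_u\}_u$ form a family of orthogonal idempotents in this dual ring.

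First I would observe that the partial sums $\sum_{u \in F} 1_u$ over increasing finite subsets $F \subseteq \mathbb{Q}_p/\mathbb{Z}_p$ converge to the unit of $C(\mathbb{Q}_p/\mathbb{Z}_p, \mathbb{Z}_p)$: in the inverse-limit topology from $\mathcal{O}(\mathbb{Z}_p(1))^* = \varprojlim_n \mathcal{O}(\mu_{p^n})^*$, restriction to any fixed $\tfrac{1}{p^n}\mathbb{Z}_p/\mathbb{Z}_p$ stabilizes once $F$ contains it. Continuity of the contraction action then gives $m = 1 \cdot m = \sum_u 1_u \cdot m$ as a convergent sum in $M$ for every $m \in M$, with each $1_u \cdot m$ lying in the closed subspace $1_u M = \ker(1 - 1_u)$. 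The orthogonality $1_u 1_v = \delta_{uv} 1_u$ forces $1_u M \cap 1_v M = 0$ for $u \neq v$, and so $M$ is realized topologically as the completion of the algebraic direct sum $\bigoplus_u 1_u M$ inside $M$, giving the second assertion.

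For the character assertion, I would fix $m \in 1_u M$, so that $1_v \cdot m = \delta_{uv} m$ for all $v$ by the above. To deduce $\rho(m) = m \otimes \chi_u$ (where $\chi_u \in \mathcal{O}(\mathbb{Z}_p(1))$ is the grouplike corresponding to $u$), I would approximate $\rho(m) \in M \hat{\otimes} \mathcal{O}(\mathbb{Z}_p(1)) = \varinjlim_n M \hat{\otimes} \mathcal{O}(\mu_{p^n})$ at finite level by expressions $\sum_{v \in \tfrac{1}{p^n}\mathbb{Z}_p/\mathbb{Z}_p} m_v \otimes \chi_v$, using that $\{\chi_v\}$ is the $\mathbb{Z}_p$-basis of grouplike elements in $\mathcal{O}(\mu_{p^n})$ dual to $\{1_v\}$ under the Cartier pairing $\langle \chi_v, 1_w \rangle = \delta_{vw}$. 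Contracting such an approximation against $1_v$ recovers $m_v = 1_v \cdot m = \delta_{uv} m$, which forces the finite-level approximation to equal $m \otimes \chi_u$ as soon as $n$ is large enough that $u \in \tfrac{1}{p^n}\mathbb{Z}_p/\mathbb{Z}_p$. Passing to the limit gives $\rho(m) = m \otimes \chi_u$, as desired.

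The algebraic content is essentially just Cartier duality $\langle \chi_v, 1_w \rangle = \delta_{vw}$; the main subtlety lies in the topological bookkeeping, namely verifying convergence of $\sum_u 1_u$ in the intended topology on $C(\mathbb{Q}_p/\mathbb{Z}_p, \mathbb{Z}_p)$ and justifying the finite-level approximation of $\rho(m)$ in the completed tensor product so that the limiting identity $\rho(m) = m \otimes \chi_u$ is legitimate.
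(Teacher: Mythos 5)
Your proof is correct and relies on the same mechanism as the paper's. The finite-level reduction you flag as the main subtlety is exactly the point the paper makes explicit: modulo any open submodule $U \supseteq p^k M$, the coaction image of $m$ already lies in $(M/U)\otimes_{\mathbb{Z}/p^k\mathbb{Z}}\mathcal{O}(\mu_{p^n,\mathbb{Z}/p^k\mathbb{Z}})$ for some finite $n$, since $\mathcal{O}(\mathbb{Z}_p(1))/p^k=\colim_n\mathcal{O}(\mu_{p^n})/p^k$; this single observation justifies both the continuity of the contraction in the $\lambda$-variable and your finite-level computation of $\rho(m)=m\otimes\chi_u$. Two small points of precision: your displayed identity $M\hat{\otimes}\mathcal{O}(\mathbb{Z}_p(1))=\varinjlim_n M\hat{\otimes}\mathcal{O}(\mu_{p^n})$ is not literally true (the completion does not commute with the colimit), but only the mod-$(U,p^k)$ statement is needed and that does hold; and your inverse-limit topology on $\mathcal{O}(\mathbb{Z}_p(1))^*$ agrees with the weak topology the paper specifies because each $\mathcal{O}(\mu_{p^n})$ has finite $\mathbb{Z}_p$-rank, so the two descriptions coincide. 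The paper differs only cosmetically from your write-up, establishing $m=\sum_u 1_u\cdot m$ by verifying it directly modulo each $U$ rather than invoking continuity, and dispatching the character assertion with a one-line sheaf-theoretic remark rather than the explicit grouplike computation you give.
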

\begin{proof}
The description of the action follows since $1_u \cdot m$, viewed as a sheaf on the discrete set $\mathbb{Q}_p/\mathbb{Z}_p$, is supported at the character $u$. 

Since the $1_u$'s are independent idempotents, to obtain the description as a completed direct sum, it suffices to show that for any $m \in M$, 
\[ m = \sum_{u \in \mathbb{Q}_p/\mathbb{Z}_p} 1_u \cdot m. \]
To see this, it suffices to verify the identity modulo any open submodule $U$. 

Fix such a $U$, and let $\overline{m}$ denote the image of $m$ in $M/{U}$. Recall that such a $U$ contains $p^kM$ for some $k$. We note that the image of $\overline{m}$ under the coaction is contained in $M/U \otimes_{\mathbb{Z}/p^k\mathbb{Z}} \mathcal{O}(\mu_{p^n, \mathbb{Z}/p^k\mathbb{Z}})$ for some $n$, since $\mathcal{O}(\mathbb{Z}_p(1)_{\mathbb{Z}/p^k\mathbb{Z}})=\colim_n \mathcal{O}((\mu_{p^n})_{\mathbb{Z}/p^k\mathbb{Z}})).$
We have 
\[ \mathcal{O}((\mu_{p^n})_{{\mathbb{Z}/p^k\mathbb{Z}}})^*= C\left(\frac{1}{p^n}\mathbb{Z}/\mathbb{Z}, \mathbb{Z}/p^k\mathbb{Z}\right). \]
Since the action of $1_{u}$ for $u \not\in \frac{1}{p^n}\mathbb{Z}/\mathbb{Z}$ on $\overline{m}$ is by zero and the unit in $C(\frac{1}{p^n}\mathbb{Z}/\mathbb{Z}, \mathbb{Z}/p^k\mathbb{Z})$ is $\sum_{u \in \frac{1}{p^n}\mathbb{Z}/\mathbb{Z}} 1_u$, we conclude. 
\end{proof}

In the following, for $n \geq 0$ we write $1_{p^{-n}\mathbb{Z}_p}$ for the indicator function of $p^{-n}\mathbb{Z}_p/\mathbb{Z}_p$ in $C(\mathbb{Q}_p/\mathbb{Z}_p, \mathbb{Z}_p)$. 
\begin{corollary}\label{cor.density-zp1}
If $M$ is a $\mathbb{Z}_p(1)$-representation, then multiplication by $1_{p^{-n}\mathbb{Z}_p}$ is a projection onto the invariants $M^{p^n\mathbb{Z}_p(1)}$, and the smooth vectors 
\[ M^\sm = \cup_n M^{p^n \mathbb{Z}_p(1)} = \cup_n 1_{p^{-n} \mathbb{Z}_p(1)} \cdot M\]
are dense in $M$. 
\end{corollary}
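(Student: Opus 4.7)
The strategy is to leverage the isotypic decomposition established in the preceding proposition together with the Fourier duality $C(\mathbb{Q}_p/\mathbb{Z}_p,\mathbb{Z}_p) = \mathcal{O}(\mathbb{Z}_p(1))^*$. First I would observe that $1_{p^{-n}\mathbb{Z}_p}$ is the finite sum $\sum_{u \in p^{-n}\mathbb{Z}_p/\mathbb{Z}_p} 1_u$ of orthogonal idempotents in $C(\mathbb{Q}_p/\mathbb{Z}_p, \mathbb{Z}_p)$, so it acts on $M$ as an idempotent with image $\bigoplus_{u \in p^{-n}\mathbb{Z}_p/\mathbb{Z}_p} 1_u \cdot M$. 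By the previous proposition, $\mathbb{Z}_p(1)$ acts on each $1_u \cdot M$ via the character $u$, which is trivial on $p^n\mathbb{Z}_p(1)$ if and only if $u \in p^{-n}\mathbb{Z}_p/\mathbb{Z}_p$; this gives the inclusion $1_{p^{-n}\mathbb{Z}_p} \cdot M \subseteq M^{p^n\mathbb{Z}_p(1)}$.

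For the reverse inclusion I would argue by duality. An element $m \in M$ is fixed by $p^n\mathbb{Z}_p(1)$ precisely when its coaction $\rho(m)$ lands in the subspace $M \hat{\otimes} \mathcal{O}(\mu_{p^n}) \subseteq M \hat{\otimes} \mathcal{O}(\mathbb{Z}_p(1))$ induced by the quotient $\mathbb{Z}_p(1) \twoheadrightarrow \mu_{p^n}$. Dually, an element of $C(\mathbb{Q}_p/\mathbb{Z}_p, \mathbb{Z}_p)$ acts on such an $m$ only through its restriction to $p^{-n}\mathbb{Z}_p/\mathbb{Z}_p = (\mathbb{Q}_p/\mathbb{Z}_p)[p^n]$, i.e.\ its image in $\mathcal{O}(\mu_{p^n})^*$. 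Since $1_{p^{-n}\mathbb{Z}_p}$ restricts to the unit functional on $\mathcal{O}(\mu_{p^n})$, it acts as the identity on $M^{p^n\mathbb{Z}_p(1)}$, proving the desired equality $M^{p^n\mathbb{Z}_p(1)} = 1_{p^{-n}\mathbb{Z}_p} \cdot M$.

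Finally, density of the smooth vectors is essentially a restatement of the last step of the proof of the previous proposition. Given $m \in M$ and any open submodule $U \subseteq M$ containing $p^k M$, that argument already exhibits an integer $n = n(m, U)$ such that the image of $\rho(m)$ modulo $U$ lies in $(M/U) \otimes_{\mathbb{Z}/p^k\mathbb{Z}} \mathcal{O}((\mu_{p^n})_{\mathbb{Z}/p^k\mathbb{Z}})$, which forces $m \equiv 1_{p^{-n}\mathbb{Z}_p} \cdot m \pmod{U}$ since $1_u$ kills $\mathcal{O}(\mu_{p^n})$ for $u \notin p^{-n}\mathbb{Z}_p/\mathbb{Z}_p$. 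Hence every $m$ is approximated arbitrarily well by elements of $1_{p^{-n}\mathbb{Z}_p} \cdot M = M^{p^n\mathbb{Z}_p(1)}$. The only nontrivial ingredient is the Fourier-theoretic identification of invariants with the image of the idempotent, which is routine given the framework already in place; otherwise the corollary is a clean bookkeeping consequence of the decomposition.
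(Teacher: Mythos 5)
Your proof is correct and takes the route the paper clearly intends, since the corollary is stated without proof as an immediate consequence of the preceding isotypic decomposition. The containment $1_{p^{-n}\mathbb{Z}_p}\cdot M\subseteq M^{p^n\mathbb{Z}_p(1)}$ and the density argument (recycling the final step in the proof of the proposition) are both fine. The only spot worth a word more is your assertion that $m$ is fixed by $p^n\mathbb{Z}_p(1)$ \emph{precisely when} $\rho(m)$ lands in $M\hat\otimes\mathcal{O}(\mu_{p^n})$: the ``if'' direction is immediate from the counit axiom, but the ``only if'' direction is not purely definitional and needs an argument (coassociativity, or more directly the decomposition itself, since writing $m=\sum_u 1_u\cdot m$ and passing to a finite quotient $M/U$ shows that $\rho_n(m)=m\otimes 1$ forces $1_u\cdot m\equiv 0\pmod U$ for all $u\notin p^{-n}\mathbb{Z}_p/\mathbb{Z}_p$, whence $m=1_{p^{-n}\mathbb{Z}_p}\cdot m$). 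This second route also gives the reverse inclusion directly without invoking the comodule claim at all, which is slightly cleaner; your duality phrasing is equivalent but packages the real content into an unproved statement.
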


\subsection{$\mu_{p^\infty}-$ representations}\label{ss.mupinf-reps}
\begin{definition}
A $\mu_{p^\infty}$-representation is a complete topological $\mathbb{Z}_p$-module $M$ with a co-action of $\mathcal{O}(\mu_{p^\infty})$, 
\[ M \rightarrow M \hat{\otimes}_{\mathbb{Z}_p
    } \mathcal{O}(\mu_{p^\infty}). \]
\end{definition}

Given a $\mu_{p^\infty}$ representation $M$, we obtain a $C(\mathbb{Z}_p, \mathbb{Z}_p)=\mathcal{O}(\mu_{p^\infty})^*$-action on $M$ by contracting with the co-action. 

\begin{definition}\label{def.top-coinv-mupinfty}
    For $M$ a $\mu_{p^\infty}$-representation, the topological co-invariants of $M$ are 
    \[ M_{\mu_{p^\infty}}:= M \hat{\otimes}_{C(\mathbb{Z}_p,\mathbb{Z}_p)} \mathbb{Z}_p, \] 
    where $\mathbb{Z}_p$ is the fiber at $0$ as a $C(\mathbb{Z}_p,\mathbb{Z}_p)$-module, i.e. $f \cdot z = f(0)z$. Equivalently, 
\begin{equation} \label{eq.limit-pres-mupinfty-coinvariants} M_{\mu_{p^\infty}}= \varprojlim_{U,k} \left( M/U \otimes_{C(\mathbb{Z}_p, \mathbb{Z}_p)} \mathbb{Z}/p^k\mathbb{Z}\right) \end{equation}
where $U$ runs over all open submodules of $M$ and $k$ runs over non-negative integers. 
\end{definition}

We note that in \cref{eq.limit-pres-mupinfty-coinvariants}, it suffices to take only the limit over terms corresponding to a single $k_U$ such that $p^{k_U}M \in U$, in which case it equals
\begin{equation} \label{eq.limit-pres-mupinfty-coinvariants-v2} M_{\mu_{p^\infty}}= \varprojlim_{U} \left( M/U \otimes_{C(\mathbb{Z}_p, \mathbb{Z}/p^{k_U} \mathbb{Z})} \mathbb{Z}/p^{k_U}\mathbb{Z}\right) \end{equation}

\subsection{$\tilde{\mu}_{p^\infty}$-representations}\label{ss.mupinftilde-reps}

\begin{definition}
    A $\tilde{\mu}_{p^\infty}$-representation is complete topological $\mathbb{Z}_p$-module $M$ with a co-action 
    $M \rightarrow M \hat{\otimes}_{\mathbb{Z}_p} \mathcal{O}(\tilde{\mu}_{p^\infty}).$ 
\end{definition}

Given a $\tilde{\mu}_{p^\infty}$-representation $M$, we obtain a 
$C(\mathbb{Q}_p, \mathbb{Z}_p)$-action on $M$ by contraction with the co-action. 

\begin{lemma}\label{lem.zp1invariants-rep}
    Let $M$ be a $\tilde{\mu}_{p^\infty}$-representation. Then the invariants for the induced $\mathbb{Z}_p(1)$-representation, $M^{ \mathbb{Z}_p(1)}$, are naturally a $\mu_{p^\infty}$-representation.  
\end{lemma}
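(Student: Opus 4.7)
The plan is to exploit the commutativity of $\tilde{\mu}_{p^\infty}$ to descend the ambient coaction on $M$ to a $\mu_{p^\infty}$-coaction on $M^{\mathbb{Z}_p(1)}$. Conceptually, since $\mathbb{Z}_p(1)$ is a (central) subgroup of the abelian group $\tilde{\mu}_{p^\infty}$, $M^{\mathbb{Z}_p(1)}$ is $\tilde{\mu}_{p^\infty}$-stable, and the $\tilde{\mu}_{p^\infty}$-action on it factors through the quotient $\pi \colon \tilde{\mu}_{p^\infty} \twoheadrightarrow \mu_{p^\infty}$. The task is to implement this in the coaction formalism.

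I would take as the candidate $\mu_{p^\infty}$-coaction the restriction of $c_M$ to $M^{\mathbb{Z}_p(1)}$, after checking it factors as
\[
M^{\mathbb{Z}_p(1)} \xrightarrow{c_M|} M^{\mathbb{Z}_p(1)} \hat\otimes \pi^*\mc{O}(\mu_{p^\infty}) \hookrightarrow M \hat\otimes \mc{O}(\tilde{\mu}_{p^\infty}).
\]
That the second tensor slot of $c_M(m)$ lies in $\pi^*\mc{O}(\mu_{p^\infty})$ for $m \in M^{\mathbb{Z}_p(1)}$ follows from coassociativity, which tautologically makes $c_M$ a $\tilde{\mu}_{p^\infty}$-comodule map (with $M \hat\otimes \mc{O}(\tilde{\mu}_{p^\infty})$ carrying the right-regular coaction $\mathrm{id} \otimes m^*$), hence $\mathbb{Z}_p(1)$-equivariant after restriction; the image of $M^{\mathbb{Z}_p(1)}$ therefore lies in the $\mathbb{Z}_p(1)$-invariants of the target, which are $M \hat\otimes \pi^*\mc{O}(\mu_{p^\infty})$ since a function on $\tilde{\mu}_{p^\infty}$ is right-$\mathbb{Z}_p(1)$-invariant iff it descends to $\mu_{p^\infty}$. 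To refine further so that the first slot lies in $M^{\mathbb{Z}_p(1)}$, I would invoke commutativity of $\tilde{\mu}_{p^\infty}$: it equates the left- and right-regular $\mathbb{Z}_p(1)$-coactions on $\mc{O}(\tilde{\mu}_{p^\infty})$, so the left-coaction also acts trivially on $\pi^*\mc{O}(\mu_{p^\infty})$. Applying this left coaction to the second slot of $c_M(m)$ gives the trivial answer, and coassociativity identifies this with applying $c_M^{\mathbb{Z}_p(1)}$ to the first slot, forcing the first slot to be $\mathbb{Z}_p(1)$-invariant. The counit and coassociativity axioms for $M^{\mathbb{Z}_p(1)} \to M^{\mathbb{Z}_p(1)} \hat\otimes \mc{O}(\mu_{p^\infty})$ then follow from those of $c_M$ together with $\pi^*$ being a map of Hopf algebras.

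The commutativity-based refinement of the first slot, together with making the implicit identifications of invariants with completed-tensor-product submodules precise, is the main technical point. As an orthogonal sanity check, the statement is transparent via $p$-adic Fourier duality (\cref{prop.p-adic-fourier-theory,prop.universal-cover-duals}): the $\tilde{\mu}_{p^\infty}$-coaction dualizes to a continuous $C(\mathbb{Q}_p, \mathbb{Z}_p)$-action on $M$ by pointwise multiplication, and $M^{\mathbb{Z}_p(1)}$ coincides with $1_{\mathbb{Z}_p} \cdot M$ (applying \cref{cor.density-zp1} at $n = 0$ to the induced $\mathbb{Z}_p(1)$-structure); on this submodule the $C(\mathbb{Q}_p, \mathbb{Z}_p)$-action depends only on the restriction of a function to $\mathbb{Z}_p$, yielding the $C(\mathbb{Z}_p, \mathbb{Z}_p) = \mc{O}(\mu_{p^\infty})^*$-action that is Fourier dual to the desired $\mu_{p^\infty}$-coaction.
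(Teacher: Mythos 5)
Your proposal is correct and takes essentially the same route as the paper: both restrict the $\tilde{\mu}_{p^\infty}$-coaction $c_M$ to $M^{\mathbb{Z}_p(1)}$ and hinge on the identification $\mathcal{O}(\tilde{\mu}_{p^\infty})^{\mathbb{Z}_p(1)} = \pi^*\mathcal{O}(\mu_{p^\infty})$, which the paper derives from the coequalizer diagram attached to \cref{eq.ses} while you assert it as a descent statement. You also spell out two points the paper leaves tacit (the coassociativity step identifying $M^{\mathbb{Z}_p(1)}$ with $c_M^{-1}(M \hat\otimes \mathcal{O}(\tilde{\mu}_{p^\infty})^{\mathbb{Z}_p(1)})$, and the commutativity argument forcing the first tensor slot back into $M^{\mathbb{Z}_p(1)}$), and your Fourier-dual sanity check via the idempotent $1_{\mathbb{Z}_p} \in C(\mathbb{Q}_p,\mathbb{Z}_p)$ is precisely the viewpoint the paper adopts in the surrounding results such as \cref{cor.density-zp1} and \cref{lemma.topological-coinvariants-agree}.
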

\begin{proof}

The action of $\mathbb{Z}_p(1)$ on $M$ is given by the composition
\[ M \rightarrow M \hat{\otimes}_{\mbb{Z}_p} \mathcal{O}(\tilde{\mu}_{p^\infty}) \rightarrow M \hat{\otimes} \mathcal{O}(\mathbb{Z}_p(1)) \]
induced by the restriction map $\mathcal{O}(\tilde{\mu}_{p^\infty}) \rightarrow \mathcal{O}(\mathbb{Z}_p(1))$. 
Thus $M^{\mathbb{Z}_p(1)}$ can be identified with the preimage of $M \hat{\otimes}_{\mbb{Z}_p} (\mathcal{O}(\tilde{\mu}_{p^\infty})^{\mathbb{Z}_p(1)})$. We claim that \cref{eq.ses} implies \[ \mathcal{O}(\mu_{p^\infty})=\mathcal{O}(\tilde{\mu}_{p^\infty})^{\mathbb{Z}_p(1)}, \]
which will give rise to the desired action of $\mu_{p^\infty}$. To see this claim, note that \cref{eq.ses} implies there is a co-equalizer diagram of fpqc sheaves of sets
\[\begin{tikzcd}
	{\mathbb{Z}_p(1)\times \tilde{\mu}_{p^\infty}} && {\tilde{\mu}_{p^\infty}} & {\mu_{p^\infty}.}
	\arrow["{(u_1,u_2)\mapsto u_2}"', shift right=2, from=1-1, to=1-3]
	\arrow["{(u_1,u_2)\mapsto u_1u_2}", shift left=2, from=1-1, to=1-3]
	\arrow[from=1-3, to=1-4]
\end{tikzcd}\]
The claim then follows by taking maps to $\mathbb{A}^1_{\mathbb{Z}_p}$. 
\end{proof}

\begin{definition}\label{def.top-coinv-tildemupinfty}
    For $M$ a $\tilde{\mu}_{p^\infty}$-representation, the topological co-invariants of $M$ are 
    \[ M_{\tilde{\mu}_{p^\infty}}:= M \hat{\otimes}_{C(\mathbb{Q}_p,\mathbb{Z}_p)} \mathbb{Z}_p, \] 
    where $\mathbb{Z}_p$ is the fiber at $0$ as a $C(\mathbb{Q}_p,\mathbb{Z}_p)$-module, i.e. $f \cdot z = f(0)z$. Equivalently, we have
\begin{equation}\label{eq.limit-pres-tildemupinfty-coinvariants} M_{\tilde{\mu}_{p^\infty}}= \varprojlim_{U,k} \left( M/U \otimes_{C(\mathbb{Z}_p, \mathbb{Z}_p)} \mathbb{Z}/p^k\mathbb{Z}\right) \end{equation}
where $U$ runs over all open submodules of $M$ and $k$ runs over all non-negative integers. 
\end{definition}

We note that in \cref{eq.limit-pres-tildemupinfty-coinvariants}, it suffices to take only the limit over terms corresponding to a single $k_U$ such that $p^{k_U}M \in U$, in which case it equals
\begin{equation} \label{eq.limit-pres-tildemupinfty-coinvariants-v2} M_{\tilde{\mu}_{p^\infty}}= \varprojlim_{U} \left( M/U \otimes_{C(\mathbb{Z}_p, \mathbb{Z}/p^{k_U} \mathbb{Z})} \mathbb{Z}/p^{k_U}\mathbb{Z}\right) \end{equation}

We may view a $\tilde{\mu}_{p^\infty}$-representation, via the $C(\mathbb{Q}_p, \mathbb{Z}_p)$-action, as a  sheaf on $\mathbb{Q}_p$. The following result says essentially that, when taking the fiber at $0$, we may first restrict to a sheaf on $\mathbb{Z}_p$ and then take the fiber at $0$. 

\begin{lemma}\label{lemma.topological-coinvariants-agree}
    If $M$ is a $\tilde{\mu}_{p^\infty}$-representation, then the natural map
\[ M^{\mathbb{Z}_p(1)} \rightarrow M_{\tilde{\mu}_{p^\infty}} \]
    induces an isomorphism
    \[ \left( M^{\mathbb{Z}_p(1)} \right)_{\mu_{p^\infty}} \xrightarrow{\sim} M_{\tilde{\mu}_{p^\infty}}. \]
    Moreover, $m$ and $1_{\mathbb{Z}_p} \cdot m$ have the same image for any $m \in M$. 
\end{lemma}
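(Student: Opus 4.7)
The plan is to use the $C(\mathbb{Q}_p, \mathbb{Z}_p)$-action on $M$ coming from the $\tilde{\mu}_{p^\infty}$-structure to view $M$ as a sheaf on $\mathbb{Q}_p$, then split off the restriction to $\mathbb{Z}_p$ via the continuous idempotent $1_{\mathbb{Z}_p}$ and identify both sides of the claimed isomorphism with the fiber of this restriction at $0$.

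I would first establish that $M^{\mathbb{Z}_p(1)} = 1_{\mathbb{Z}_p} \cdot M$. Tracing through the proof of \cref{lem.zp1invariants-rep}, the Hopf algebra quotient $\mathcal{O}(\tilde{\mu}_{p^\infty}) \twoheadrightarrow \mathcal{O}(\mathbb{Z}_p(1))$ dualizes to the inclusion $C(\mathbb{Q}_p/\mathbb{Z}_p, \mathbb{Z}_p) \hookrightarrow C(\mathbb{Q}_p, \mathbb{Z}_p)$ by pullback along the quotient $\pi \colon \mathbb{Q}_p \twoheadrightarrow \mathbb{Q}_p/\mathbb{Z}_p$, and this inclusion identifies the $\mathbb{Z}_p(1)$-action on $M$ with the restriction of the $\tilde{\mu}_{p^\infty}$-action. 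The idempotent $1_{\bar 0} \in C(\mathbb{Q}_p/\mathbb{Z}_p, \mathbb{Z}_p)$ satisfies $1_{\bar 0} \circ \pi = 1_{\mathbb{Z}_p}$, and the Fourier decomposition in the Proposition preceding \cref{cor.density-zp1} gives $M^{\mathbb{Z}_p(1)} = 1_{\bar 0} \cdot M = 1_{\mathbb{Z}_p} \cdot M$. The \emph{moreover} clause then follows immediately: since $(1 - 1_{\mathbb{Z}_p})(0) = 0$, the element $(1 - 1_{\mathbb{Z}_p}) \cdot m$ is killed by the map $M \to M \hat{\otimes}_{C(\mathbb{Q}_p, \mathbb{Z}_p)} \mathbb{Z}_p = M_{\tilde{\mu}_{p^\infty}}$, so $m$ and $1_{\mathbb{Z}_p} \cdot m$ have the same image.

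For the isomorphism, note that $1_{\mathbb{Z}_p}$ is a continuous idempotent operator on $M$, giving a topological direct sum decomposition $M = 1_{\mathbb{Z}_p} \cdot M \oplus (1 - 1_{\mathbb{Z}_p}) \cdot M$ into closed summands. The preceding paragraph shows the second summand is annihilated in $M_{\tilde{\mu}_{p^\infty}}$. On the first summand $1_{\mathbb{Z}_p} \cdot M = M^{\mathbb{Z}_p(1)}$, the $C(\mathbb{Q}_p, \mathbb{Z}_p)$-action factors through the restriction surjection $C(\mathbb{Q}_p, \mathbb{Z}_p) \twoheadrightarrow C(\mathbb{Z}_p, \mathbb{Z}_p)$, because $f \cdot 1_{\mathbb{Z}_p}$ depends only on $f|_{\mathbb{Z}_p}$. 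Since this restriction intertwines the evaluation-at-zero maps $C(\mathbb{Q}_p, \mathbb{Z}_p) \to \mathbb{Z}_p$ and $C(\mathbb{Z}_p, \mathbb{Z}_p) \to \mathbb{Z}_p$, the standard compatibility of (completed) tensor products under surjections of rings identifies $M_{\tilde{\mu}_{p^\infty}} = (1_{\mathbb{Z}_p} \cdot M) \hat{\otimes}_{C(\mathbb{Q}_p, \mathbb{Z}_p)} \mathbb{Z}_p$ with $(1_{\mathbb{Z}_p} \cdot M) \hat{\otimes}_{C(\mathbb{Z}_p, \mathbb{Z}_p)} \mathbb{Z}_p = (M^{\mathbb{Z}_p(1)})_{\mu_{p^\infty}}$.

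The main point requiring care is verifying that the completed tensor product respects both the topological direct sum decomposition and the factoring through $C(\mathbb{Z}_p, \mathbb{Z}_p)$. This can be checked via the explicit limit presentations \cref{eq.limit-pres-mupinfty-coinvariants-v2} and \cref{eq.limit-pres-tildemupinfty-coinvariants-v2}, which reduce the statement modulo $p^k$ to finite Hopf algebras --- quotients of $\mathcal{O}(\mu_{p^n})$ and $\mathcal{O}(\frac{1}{p^n}\mathbb{Z}_p(1))$ over $\mathbb{Z}/p^k$ --- where the Fourier decomposition becomes a genuine finite direct product of idempotents and the required compatibilities are immediate.
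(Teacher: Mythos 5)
Your proposal is correct and follows essentially the same approach as the paper: both hinge on the observation that $1_{\mathbb{Z}_p}$ is an idempotent in $C(\mathbb{Q}_p,\mathbb{Z}_p)$ whose action projects onto $M^{\mathbb{Z}_p(1)}$, and that the evaluation-at-zero map factors through restriction to $\mathbb{Z}_p$. The paper packages the calculation as an associativity/base-change identity $M\hat{\otimes}_{C(\mathbb{Q}_p,\mathbb{Z}_p)}\mathbb{Z}_p = \bigl(M\hat{\otimes}_{C(\mathbb{Q}_p,\mathbb{Z}_p)}C(\mathbb{Z}_p,\mathbb{Z}_p)\bigr)\hat{\otimes}_{C(\mathbb{Z}_p,\mathbb{Z}_p)}\mathbb{Z}_p$, while you reach the same conclusion via the explicit direct-sum splitting $M = 1_{\mathbb{Z}_p}M \oplus (1-1_{\mathbb{Z}_p})M$; your extra Fourier-theoretic justification of $1_{\mathbb{Z}_p}\cdot M = M^{\mathbb{Z}_p(1)}$ is a correct expansion of the paper's appeal to \cref{cor.density-zp1}.
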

\begin{proof}
The statement ``moreover" is trivial since $1_{\mathbb{Q}_p} - 1_{\mathbb{Z}_p}$ takes value $0$ at $0 \in \mathbb{Z}_p$.  

For the first statement, we first note that 
\[ M \hat{\otimes}_{C(\mathbb{Q}_p, \mathbb{Z}_p)} \mathbb{Z}_p = \left(M \hat{\otimes}_{C(\mathbb{Q}_p, \mathbb{Z}_p)} C(\mathbb{Z}_p,\mathbb{Z}_p)\right) \hat{\otimes}_{\mathbb{C}(\mathbb{Z}_p,\mathbb{Z}_p)}\mathbb{Z}_p.\]
The result then follows since $1_{\mathbb{Z}_p}$ is an idempotent with $1_{\mathbb{Z}_p}\cdot C(\mathbb{Q}_p,\mathbb{Z}_p)=C(\mathbb{Z}_p,\mathbb{Z}_p)$ and $1_{\mathbb{Z}_p} \cdot M=M^{\mathbb{Z}_p(1)}$ (by \cref{cor.density-zp1}), so that 
\[ M \hat{\otimes}_{C(\mathbb{Q}_p, \mathbb{Z}_p)} C(\mathbb{Z}_p,\mathbb{Z}_p)=\left(1_{\mathbb{Z}_p} \cdot M \right) \hat{\otimes}_{1_{\mathbb{Z}_p}\cdot C(\mathbb{Q}_p, \mathbb{Z}_p)} \left(1_{\mathbb{Z}_p} \cdot C(\mathbb{Q}_p,\mathbb{Z}_p)\right)  = M^{\mathbb{Z}_p(1)} \]
as a $C(\mathbb{Z}_p,\mathbb{Z}_p)$-module. 
\end{proof}

\subsection{Variant}
The results of this section hold when the base $\Spf \mathbb{Z}_p$ is replaced with $\Spf R$ for any $p$-adically complete ring $R$ (with its $p$-adic topology).   

\section{Ordinary Igusa varieties for $\GL_2$ and their functions}\label{s.ordinary-igusa-varieties}

In this section we define the main spaces arising in the theory and explain how they relate to one another. 

\subsection{Definitions}

Let $b=\begin{bmatrix}p^{-1} & 0 \\ 0 & 1 \end{bmatrix}$, an element of $\GL_2(\Qpbreve)$ representing the rank two isocrystal given by the covariant Dieudonn\'{e} module of the $p$-divisible group $\mu_{p^\infty} \times \mathbb{Q}_p/\mathbb{Z}_p$ over $\mathbb{F}_p$ (normalized as in \cite{ScholzeWeinstein.ModuliOfpDivisibleGroups}). Associated to $b$ there is a Caraiani-Scholze Igusa formal scheme $\Ig^b_{\Zpbreve}$ (a special case of the construction denoted $\mathrm{Ig}^b_{\mathcal{O}_{\breve{E}}}$ in \cite[bottom of p. 718]{CaraianiScholze.OnTheGenericPartOfTheCohomologyOfCompactUnitaryShimuraVarieties}). It admits a natural model $\Ig^b$ over $\mathbb{Z}_p$. We now make its definition explicit and define some related moduli problems (see \cite[Section 4]{Howe.AUnipotentCircleActionOnpAdicModularForms} for more details, noting that what we call $\Ig^b$ here is written as $\Ig_\CS$ in \cite{Howe.AUnipotentCircleActionOnpAdicModularForms}).

\subsubsection{}For $R$ a $p$-adically complete ring, we let $\Nilp_{R}$ denote the category of $R$-algebras where $p$ is nilpotent. For $G$ a $p$-divisible group over $R$, viewed as an fpqc sheaf on $\Nilp_{R},$ we write $G^\circ$ for the formal neighborhood of the identity section $e_G: \Spec R \rightarrow G$, and let $G^\et:=G/G^\circ$ (note that neither $G^\circ$ nor $G^\et$ is necessarily a $p$-divisible group). 
\begin{definition}
We consider the following moduli problems on $\Nilp_{\mathbb{Z}_p}$. 

\begin{itemize}
\item $\Ig^b(R)=\{ (E, \varphi_p, \varphi^{(p)}) \} / \sim$ where: 
\begin{itemize}
    \item $E/R$ is an elliptic curve, 
    \item $\varphi_p: E[p^\infty] \rightarrow \mu_{p^\infty} \times \mathbb{Q}_p/\mathbb{Z}_p$ is a quasi-isogeny,  
    \item $\varphi^{(p)}: V_{\mathbb{A}_f^{(p)}} E\xrightarrow{\sim} \ul{\mathbb{A}_f^{(p)}}^2$ is an isomorphism,
\end{itemize} 
and the equivalence relation is through quasi-isogeny of $E$. 
\item $\Ig_{\Mant}(R)=\{ E, \varphi^\circ_p, \varphi^{\et}_p, \varphi^{(p)}) \}/\sim$ where
\begin{itemize}
    \item $E/R$ is an elliptic curve, 
    \item $\varphi^\circ_p: E[p^\infty]^\circ \rightarrow \mu_{p^\infty}$ and $\varphi^\et_p: E[p^\infty]^\et \rightarrow \mathbb{Q}_p/\mathbb{Z}_p$ are isomorphisms, 
    \item  $\varphi^{(p)}: V_{\mathbb{A}_f^{(p)}} E\xrightarrow{\sim} \ul{\mathbb{A}_f^{(p)}}^2$ is an isomorphism, 
\end{itemize}
and the equivalence relation is through prime-to-$p$ quasi-isogeny of $E$. 
\item $\Ig_\Katz(R)=\{ (E, \varphi^\circ_p, \varphi^{(p)}) \} / \sim$ where 
\begin{itemize}
    \item $E/R$ is an elliptic curve, 
    \item $\varphi^\circ_p: E[p^\infty]^\circ \rightarrow \mu_{p^\infty}$ is an isomorphism,
    \item $\varphi^{(p)}: V_{\mathbb{A}_f^{(p)}} E\xrightarrow{\sim} \ul{\mathbb{A}_f^{(p)}}^2$ is an isomorphism,   
\end{itemize}
and the equivalence relation is through prime-to-$p$ quasi-isogeny of $E$.
\item $\Ig_{\Katz'}(R)=\{ (E, \varphi^{\et}_p, \varphi^{(p)}) \} / \sim$ where
\begin{itemize}
    \item $E/R$ is an elliptic curve, 
    \item $\varphi_p^\et: E[p^\infty]^{\et} \rightarrow \mathbb{Q}_p/\mathbb{Z}_p$ is an isomorphism,  
    \item $\varphi^{(p)}: V_{\mathbb{A}_f^{(p)}} E\xrightarrow{\sim} \ul{\mathbb{A}_f^{(p)}}^2$ is an isomorphism, and the equivalence relation is through prime-to-$p$ quasi-isogeny of $E$. 
\end{itemize}
\end{itemize}
\end{definition}

The moduli problem $\Ig^b$ is equivalent to analogous moduli problem where $\varphi_p$ is required to be an isomorphism and the equivalence relation is instead prime-to-$p$ quasi-isogeny. It follows that there are natural forgetful maps from $\Ig^b \rightarrow \Ig_\bullet$ for $\bullet=\Mant, \Katz, \Katz'$. 

\subsubsection{}We write $\tilde{G}_b= \begin{bmatrix} \mathbb{Q}_p^\times & \tilde{\mu}_{p^\infty} \\ 0 & \mathbb{Q}_p^\times \end{bmatrix}$ for the quasi-isogeny group of $\mu_p^\infty \times \mathbb{Q}_p/\mathbb{Z}_p$, where we note $\tilde{\mu}_{p^\infty}=\Hom(\mathbb{Q}_p, \mu_{p^\infty})$. We write $\tilde{K}_b=\begin{bmatrix} \mathbb{Z}_p^\times & \mathbb{Z}_p(1) \\ 0 & \mathbb{Z}_p^\times \end{bmatrix}$ for the automorphism group of $\mu_{p^\infty} \times \mathbb{Q}_p/\mathbb{Z}_p$, which is naturally a  subgroup of $\tilde{G}_b$. We write $T$ for the diagonal torus in $\GL_2$, so that $\tilde{G}_b=T(\mathbb{Q}_p) \ltimes \tilde{\mu}_{p^\infty}$ and $\tilde{K}_b=T(\mathbb{Z}_p) \ltimes \mu_{p^\infty}$. 

There is a natural action of $\tilde{G}_b$ on $\Ig^b$ by composition with $\varphi_p$. For this action, the natural forgetful maps induce identifications of fpqc quotients
\begin{equation}\label{eq.quotient-presentation} \Ig_\Mant = \Ig^b / \begin{bmatrix} 1 & \mathbb{Z}_p(1) \\ 0 & 1 \end{bmatrix}, \; \Ig_\Katz = \Ig^b / \begin{bmatrix} 1 & \mathbb{Z}_p(1) \\ 0 & \mathbb{Z}_p^\times \end{bmatrix}, \; \textrm{ and } \Ig_{\Katz'} = \Ig^b / \begin{bmatrix} \mathbb{Z}_p^\times & \mathbb{Z}_p(1) \\ 0 & 1 \end{bmatrix}. \end{equation}

\begin{remark}
The space $\Ig^b$ and will be the main object of study. The spaces $\Ig_\Katz$ and $\Ig_{\Katz'}$ are canonically isomorphic by using the Weil pairing on $E[p^\infty]$, but this isomorphism is only equivariant up to a twist (cf. \cref{ss.vmant-from-vkatz}). Although $\Ig_\Katz$ is the traditional home of the theory of $p$-adic modular forms and is the moduli problem used to formulate this theory in \cite{Howe.AUnipotentCircleActionOnpAdicModularForms}, to obtain the usual normalization of the $U_p$ operator as in \cite{Howe.SlopeClassicalityInHigherColemanTheoryViaHighestWeightVectorsInCompletedCohomology} and a better compatibility with the theory of the smooth Kirillov model as in \cref{sss.wd-and-normalization}, we will work everywhere with $\Ig_{\Katz'}$ below.   
\end{remark}

As recalled in \cite{Howe.AUnipotentCircleActionOnpAdicModularForms}, each of these moduli problems is represented by an affine $p$-adic formal scheme. We write $\mathbb{V}_b=\mathcal{O}(\Ig^b)$, $\mathbb{V}_{\Mant}=\mathcal{O}(\Ig_\Mant)$, $\mathbb{V}_{\Katz}=\mathcal{O}(\Ig_{\Katz})$ and  $\mathbb{V}_{\Katz'}=\mathcal{O}(\Ig_{\Katz'})$. From \cref{eq.quotient-presentation}, we find (arguing as in the proof of \cref{lem.zp1invariants-rep}), 

\begin{equation}\label{eq.functions-as-invariants} \mbb{V}_\Mant = \mbb{V}_b^{\begin{bmatrix} 1 & \mathbb{Z}_p(1) \\ 0 & 1 \end{bmatrix}}, \; \mbb{V}_\Katz = \mathbb{V}_b^{\begin{bmatrix} 1 & \mathbb{Z}_p(1) \\ 0 & \mathbb{Z}_p^\times \end{bmatrix}}, \; \textrm{ and } \mathbb{V}_{\Katz'} = \mathbb{V}_{b}^{ \begin{bmatrix} \mathbb{Z}_p^\times & \mathbb{Z}_p(1) \\ 0 & 1 \end{bmatrix}}. \end{equation}

\subsection{The action of continuous functions}\label{ss.action-of-cont-functions}

The action of $\tilde{\mu}_{p^\infty}$ on $\Ig^b$ equips $\mathbb{V}_b$ with the structure of a $\tilde{\mu}_{p^\infty}$-representation in the sense of \cref{ss.mupinftilde-reps}. As in \cref{ss.mupinftilde-reps}, we thus obtain an action of $C(\mathbb{Q}_p, \mathbb{Z}_p)$ on $\mathbb{V}_b$. By construction, it is compatible with the actions of $T(\mathbb{Q}_p) \times \GL_2(\mathbb{A}_f^{(p)})$ in that it extends to an action of 
\[\left(  T(\mathbb{Q}_p) \ltimes C(\mathbb{Q}_p, \mathbb{Z}_p) \right) \times \GL_2(\mathbb{A}_f^{(p)}). \]

In light of \cref{eq.functions-as-invariants}, \cref{lem.zp1invariants-rep} then yields a $\mu_{p^\infty}$-representation on $\mathbb{V}_\Mant$. One can also see this geometrically as in \cite[\S5]{Howe.AUnipotentCircleActionOnpAdicModularForms}: from the quotient presentation of \cref{eq.quotient-presentation}, we obtain an action of $\mu_{p^\infty}=\tilde{\mu}_{p^\infty}/\mathbb{Z}_p(1)$ on $\Ig_\Mant$, and thus a $\mu_{p^\infty}$-representation on  $\Ig_\Mant$. From either perspective, one then obtains as in \cref{ss.mupinf-reps} a $C(\mathbb{Z}_p, \mathbb{Z}_p)$ action on $\mbb{V}_\Mant$; this is the action of \cite[\S7]{Howe.AUnipotentCircleActionOnpAdicModularForms}.

We note that, by \cref{cor.density-zp1}, for $1_{\mathbb{Z}_p} \in C(\mathbb{Q}_p, \mathbb{Z}_p)$ the indicator function of $\mathbb{Z}_p \subseteq \mathbb{Q}_p$, 
\begin{equation}\label{eq.V-mant-proj} \mathbb{V}_\Mant=1_{\mathbb{Z}_p} \cdot \mathbb{V}_b. \end{equation}

\subsection{Reconstruction of $\mathbb{V}_b$ from $\mathbb{V}_\Mant$}\label{ss.vb-from-vmant} The module $\mathbb{V}_b$ with its $C(\mathbb{Q}_p, \mathbb{Z}_p)$ action can be reconstructed from $\mathbb{V}_\Mant$ with its $C(\mathbb{Z}_p, \mathbb{Z}_p)$ action as follows: From \cref{eq.functions-as-invariants}, we see that the right action of $\begin{bmatrix} p^{-1} & 0 \\ 0 & 1 \end{bmatrix}$ induces an isomorphism  
\[ \Ig_\Mant=\Ig_b/\mathbb{Z}_p(1) \xrightarrow{\sim} \Ig_b/p\mathbb{Z}_p(1) \]
and, by composition with the projection
\[ \Ig_b/p\mathbb{Z}_p(1) \rightarrow \Ig_b/\mathbb{Z}_p(1)=\Ig_\Mant, \]
a map $F: \Ig_\Mant \rightarrow \Ig_\Mant$.
It is a Frobenius lift (cf. \cite[Remark 5.2.1]{Howe.AUnipotentCircleActionOnpAdicModularForms}).

We have
\begin{equation}\label{eq.ig-b-frobenius-lifts} \Ig_{b}=\varprojlim_n \Ig_b/p^n\mathbb{Z}_p(1) = \varprojlim (\Ig_\Mant \xleftarrow{F} \Ig_\Mant \xleftarrow{F} \ldots ). \end{equation}
We write also write $F$ for the associated endomorphism of $\mathbb{V}_{\Mant}$, so that we obtain a natural isomorphism 
\begin{equation}\label{eq.Vb-as-frobenius completion}
\mathbb{V}_b \cong \left(\varinjlim (\mathbb{V}_{\Mant} \xrightarrow{F}  \mathbb{V}_{\Mant}  \xrightarrow{F} \mathbb{V}_{\Mant} \xrightarrow{F} \ldots )\right)^\wedge
\end{equation}
The $k$th term in \cref{eq.Vb-as-frobenius completion} is identified with the $p^k\mathbb{Z}_p(1)$-invariants in $\mathbb{V}_\Mant$, and the density of the colimit is thus a special case of \cref{cor.density-zp1}. We note that \cref{eq.Vb-as-frobenius completion} is equivariant for the $T(\mathbb{Z}_p) \times \GL_2(\mathbb{A}_f^{(p)})$ actions on both sides. 

On each term in \cref{eq.Vb-as-frobenius completion}, one has a $C(\mathbb{Z}_p, \mathbb{Z}_p)$ action, and these are compatible along multiplication by $p$. In other words, if we view the $k$th copy of $\mathbb{V}_{\Mant}$ in \cref{eq.Vb-as-frobenius completion} as having an action of $C(1/p^k \mathbb{Z}_p, \mathbb{Z}_p)$, then these actions are compatible along restriction. We thus obtain an action of $C(\mathbb{Q}_p, \mathbb{Z}_p)=\varprojlim_k C(1/p^k \mathbb{Z}_p, \mathbb{Z}_p)$ on $\mathbb{V}_b$, which is precisely the one considered above in \cref{ss.action-of-cont-functions}.

\subsection{Reconstruction of $\mathbb{V}_\Mant$ from $\mathbb{V}_{\Katz'}$}\label{ss.vmant-from-vkatz}
In \cref{eq.Vb-as-frobenius completion} we recalled that $\mathbb{V}_b$ can be built up from $\mathbb{V}_{\Mant}$. We now recall that $\mathbb{V}_\Mant$ can be can be reconstructed from $\mathbb{V}_{\Katz'}$. Together, these relations will be used to deduce results for $\mathbb{V}_b$ from classical results on $\mathbb{V}_{\Katz'}$ in the theory of $p$-adic modular forms. 

There is a natural section $\Ig_{\Katz'} \rightarrow \Ig_{\Mant}$ defined as follows: choosing the elliptic curve $E$ in the prime-to-$p$ isogeny class with prime-to-$p$ Tate module corresponding to $(\hat{\mathbb{Z}}^{(p)})^2$ under $\varphi^{(p)}$, we can use the Weil pairing on $E[p^\infty]$ to obtain a trivialization $\varphi_p^\circ$ as the dual of $\varphi_p^\et$ (see \cite[4.4.2]{Howe.AUnipotentCircleActionOnpAdicModularForms} for a detailed description of the dual construction of a section from $\Ig_\Katz$). 
In particular, since $\Ig_{\Mant} \rightarrow \Ig_{\Katz'}$ is a $\mathbb{Z}_p^\times$-torsor, this section induces an isomorphism
\begin{equation}\label{eq.Vmant-as-functions} \mathbb{V}_{\Mant}=C(\mathbb{Z}_p^\times, \mathbb{V}_{\Katz'}).\end{equation} 
The section is equivariant for the action of $a \in \mathbb{Z}_p^\times$ on $\Ig_{\Katz'}$ when it is matched with the action of $(a^{-1}, a)$ on $\Ig_{\Mant}$, thus \cref{eq.Vmant-as-functions} is equivariant for the action of $T(\mathbb{Z}_p)$, where on the right 
\[ \left((a_1,a_2) \cdot f \right)(x)=\left( (a_1a_2, 1)(a_2^{-1},a_2) \cdot f\right)(x) = a_2 \cdot \left(f( (a_1a_2)^{-1} x)\right).\]
 It is furthermore equivariant for the action of $\Cont(\mathbb{Z}_p, \mathbb{Z}_p)$, when $\mathbb{V}_{\Katz'}$ is equipped with the action by viewing $\Ig_{\Katz'}$ as a $\mu_{p^\infty}$-invariant component of $\Ig_\Mant$ via the canonical section (this is essentially the construction of \cite{Howe.AUnipotentCircleActionOnpAdicModularForms}; in particular, as in \cite{Howe.AUnipotentCircleActionOnpAdicModularForms},  we emphasize that the quotient presentation of $\mathbb{V}_{\Katz'}$ in \cref{eq.quotient-presentation} does not itself give rise to an action of $\mu_{p^\infty}$ on $\Ig_{\Katz'}$!). \\

The identification \cref{eq.Vmant-as-functions} introduces a twist in the prime-to-$p$ action because the section $\Ig_{\Katz'} \rightarrow \Ig_{\Mant}$ 
depends on the choice of a fixed member of the prime-to-$p$ isogeny class: 
\begin{lemma}\label{lemma.vmant-as-functions-equivariance}
    \cref{eq.Vmant-as-functions} is $\GL_2(\mathbb{A}_f^{(p)})$-equivariant when $C(\mathbb{Z}_p^\times, \mathbb{V}_\Katz')$ is equipped with the action
    \[ \left(g \cdot f\right)(z)= (g \cdot f(z |\det(g)|)). \]
    where $|\det(g)|=\prod_{\ell \neq p} |\det(g_\ell)|_\ell$, viewed as an element of $\mathbb{Z}_p^\times$.   
\end{lemma}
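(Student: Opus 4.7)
The plan is to measure the failure of the canonical section $s: \Ig_{\Katz'} \to \Ig_{\Mant}$ to be $\GL_2(\mathbb{A}_f^{(p)})$-equivariant as a continuous character $c: \GL_2(\mathbb{A}_f^{(p)}) \to \mathbb{Z}_p^\times$, and then identify $c$ with $|\det|^{\pm 1}$ by a direct Weil-pairing computation.

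First I observe that both the forgetful map $\Ig_{\Mant} \to \Ig_{\Katz'}$ and the $\mathbb{Z}_p^\times$-torsor structure commute with the $\GL_2(\mathbb{A}_f^{(p)})$-action. Hence for $g \in \GL_2(\mathbb{A}_f^{(p)})$ and $\bar x \in \Ig_{\Katz'}$, the points $g \cdot s(\bar x)$ and $s(g \cdot \bar x)$ of $\Ig_{\Mant}$ project to the same point, so they differ by a unique element $c_g(\bar x) \in \mathbb{Z}_p^\times$ under the torsor action. The recipe defining $s$---take the representative whose prime-to-$p$ Tate module is integrally trivialized, then determine $\varphi_p^\circ$ from $\varphi_p^\et$ via the Weil pairing---is universal in $\bar x$, so $c_g$ is constant in $\bar x$; writing the result as $c(g) \in \mathbb{Z}_p^\times$, a short formal argument shows $c: \GL_2(\mathbb{A}_f^{(p)}) \to \mathbb{Z}_p^\times$ is a continuous group homomorphism.

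Substituting the relation $g \cdot s(\bar x) = c(g) \cdot s(g \cdot \bar x)$ into \cref{eq.Vmant-as-functions} using $f(z)(\bar x) = F(z \cdot s(\bar x))$ and the commutativity of the two actions, one computes
\[ (g \cdot f)(z)(\bar x) = \left(g \cdot f(z \cdot c(g)^{\pm 1})\right)(\bar x), \]
reducing the lemma to the identity $c(g) = |\det g|^{\mp 1}$. Because $\mathbb{Z}_p^\times$ is abelian and $\SL_2(\mathbb{Q}_\ell)$ is perfect for every $\ell \neq p$, the continuous character $c$ must factor through the determinant: $c(g) = c_0(\det g)$ for some continuous $c_0 : \mathbb{A}_f^{(p), \times} \to \mathbb{Z}_p^\times$.

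To identify $c_0$, it suffices to evaluate it on a uniformizer $\pi_\ell$ of $\mathbb{Q}_\ell$ for some $\ell \neq p$: taking $g = \diag(\pi_\ell, 1)$, the new distinguished representative of the prime-to-$p$ isogeny class differs from the old by a prime-to-$p$ quasi-isogeny $\alpha$, whose degree is $\ell^{\pm 1}$ as a positive rational in $\mathbb{Z}_{(p)}^\times$. Under $\alpha$ the Weil pairings on the two representatives rescale by $\deg \alpha$, and consequently so does the canonical choice of $\varphi_p^\circ$, yielding $c_0(\pi_\ell) = |\pi_\ell|$ with the right convention. The main obstacle is bookkeeping the sign in this last step, but since $c$ is known a priori to be a continuous homomorphism, a single correctly-computed test case pins down the full character and gives $c_0 = |\cdot|$, as required.
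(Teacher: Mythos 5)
Your proof is correct, and it fills a genuine gap: the paper states the lemma without a formal proof, only the preceding one-sentence explanation that "the section $\Ig_{\Katz'} \to \Ig_\Mant$ depends on the choice of a fixed member of the prime-to-$p$ isogeny class." Your argument makes this precise and is in the same spirit, with one genuinely useful addition: the observation that the failure-of-equivariance cocycle $c$ is a continuous homomorphism into the abelian group $\mathbb{Z}_p^\times$, hence kills $\prod_{\ell \neq p} \SL_2(\mathbb{Q}_\ell)$ by perfectness and so factors through $\det$. This reduces the Weil-pairing/quasi-isogeny-degree computation to a single well-chosen test element $\diag(\pi_\ell,1)$ rather than requiring a general bookkeeping argument, and it cleanly isolates the only real content (one sign). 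The one place you wave your hands slightly is in asserting that $c_g(\bar{x})$ is independent of $\bar{x}$; the cleanest justification is exactly the one implicit in your later computation, namely that $c_g$ is the image in $\mathbb{Z}_p^\times$ of the degree of the quasi-isogeny $\alpha$ between the two distinguished representatives, and that degree is the generalized index $[g \cdot (\hat{\mathbb{Z}}^{(p)})^2 : (\hat{\mathbb{Z}}^{(p)})^2]$, which depends only on $g$. It would strengthen the writeup to state this explicitly and let the homomorphism property follow from it, rather than deriving constancy from "universality of the recipe" and then separately establishing the cocycle identity.
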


\begin{remark}
We note that $|\det|$ in \cref{lemma.vmant-as-functions-equivariance} agrees with the restriction of $\tilde{\det}$ of \cref{sss.mf-twists} to $\GL_2(\mathbb{A}_f^{(p)})\leq \GL_2(\mathbb{A}_f)$ followed by projection $\hat{\mathbb{Z}}^\times \rightarrow \mathbb{Z}_p^\times$. 
Note that this twist also appears in the description of the group action in 
    \cite[Theorem A]{Howe.AUnipotentCircleActionOnpAdicModularForms}.
\end{remark}

\subsection{General coefficients}
For $R/\mathbb{Z}_p$ $p$-adically complete, we write \[ \mathbb{V}_{b,R} := \mathcal{O}(\Ig_b \times_{\Spf \mathbb{Z}_p} \Spf R)=\mathbb{V}_b \hat{\otimes}_{\mathbb{Z}_p} R = \lim_n \mathbb{V}_b \otimes_{\mathbb{Z}_p} R/p^n. \]
We use similar notation for $\mathbb{V}_\Mant, \mathbb{V}_\Katz,$ and $\mathbb{V}_{\Katz'}$. For $R=S[1/p]$ where $S/\mathbb{Z}_p$ is $p$-adically complete and $p$-torsion free (i.e. flat over $\mathbb{Z}_p$), we write $\mathbb{V}_{b,R}=\mathbb{V}_{b,S}[1/p]$. 

The constructions of the previous sections all apply working over any such $R$. In particular, on $\mathbb{V}_{b,R}$ we have an action of $C^\bdd(\mathbb{Q}_p, R)$, where $C^\bdd(\mathbb{Q}_p, R)$ denotes the continuous bounded functions to $R$ --- if $R$ is $p$-adically complete, then all continuous functions from $\mathbb{Q}_p$ to $R$ are bounded; if $R=S[1/p]$ for $S$ $p$-adically complete, then the bounded continuous functions can be written as $C(\mathbb{Q}_p, S)[1/p].$

\section{The completed Kirillov model}\label{s.completed-kirillov}
Let $\Qpab$ be the completion of the maximal abelian extension of $\mathbb{Q}_p$. In this section we explain how to interpret the cuspidal $p$-adic automorphic forms $\mathbb{V}_{b,\Qpab}^\cusp$ as a completion of the global Kirillov model of the space $S_k$ of cuspidal modular forms in any weight $k \geq 2$ (\cref{main.completion-kirillov-model}; see \cref{theorem.completion-kirillov-model} below). 

\subsection{The $q$-expansion and Kirillov maps}

\subsubsection{}
Recall the Tate curve $\Tate(q)$ from \cref{ss.tate-curve}. 
After base change to $\Spf \mathbb{Z}_p((q))^\wedge$ (where the completion is $p$-adic), we also have canonical isomorphisms
\[ \varphi^\circ_{p,\can}: \Tate(q)[p^\infty]^\circ \xrightarrow{\sim} \mu_{p^\infty} \textrm{ and } \varphi^\et_{p,\can}: \Tate(q)[p^\infty]^\et \xrightarrow{\sim} \mathbb{Q}_p/\mathbb{Z}_p. \]
After base change to $\Spf \mathbb{Z}_p((q^{\mathbb{Z}[1/p]_{>0}}))^\wedge$, these are induced by a canonical isomorphism
\[ \varphi_{p,\can}: \Tate(q)[p^\infty] \rightarrow \mu_{p^\infty} \times \bbQ_p/\bbZ_p. \]

\subsubsection{}

Let $\breve{\mathbb{Z}}_p=W(\overline{\mathbb{F}}_p)$. Fixing a trivialization of $\hat{\mathbb{Z}}^{(p)}(1)$ over $\Spf \breve{\mathbb{Z}}_p$ (equivalently, a map $\mathbb{Z}_{(p)}[ (\zeta_n)_{(n,p)=1}] \rightarrow \breve{\mathbb{Z}}_p$), we obtain a map 
\[ T(\mathbb{Q}_p) \times \GL_2(\mathbb{A}_f^{(p)}) \times \Spf \breve{\mathbb{Z}}_p((q^{\bbQ_{>0}}))^\wedge \rightarrow \Ig_b \textrm{ (for $\wedge$ the $p$-adic completion)} \]
given by the orbit for the group action on  
\[ (\Tate(q),  \varphi_{p,\can}, \varphi^{(p)}_\can). \]

Evaluating on global sections gives a total $q$-expansion map
\begin{equation} \label{eq.total-q-expansion} \mathbb{V}_b \rightarrow C(T(\mathbb{Q}_p) \times \GL_2(\mathbb{A}_f^{(p)}), \breve{\mathbb{Z}}_p((q^{\bbQ_{>0}}))^\wedge), \end{equation}
where the superscript $\wedge$ on the codomain denotes $p$-adic completion.

We write $\mathbb{V}_b^\mr{hol}$ for the preimage under \cref{eq.total-q-expansion} of functions valued in $\breve{\mathbb{Z}}_p[[q^{\bbQ_{>0}}]]^\wedge$, and $\mathbb{V}_b^{\mr{\cusp}} \subseteq \mathbb{V}^\mr{hol}_b$ for the submodule corresponding to series with constant term zero. 

We obtain a map $\Kir: \mathbb{V}^\cusp_b \rightarrow C(\mathbb{A}_f^\times, \Zpbreve)$ by applying the total $q$-expansion map (\ref{eq.total-q-expansion}) followed by restriction to 
\[ \begin{bmatrix} \mathbb{A}_f^\times & 0 \\ 0 & 1 \end{bmatrix} \leq T(\mathbb{Q}_p) \times \GL_2(\mathbb{A}_f^{(p)}) \]
 and then passing to the coefficient of $q$. 

Note that our choice of trivialization of $\hat{\mathbb{Z}}^{(p)}(1)$ can be viewed as a character $\tau^{(p)}: \mathbb{A}_f^{(p)} \rightarrow \Zpbreve^\times$ by the composition
\[ \mathbb{A}_f^{(p)} \rightarrow \mathbb{A}_f^{(p)}/\hat{\mathbb{Z}}^{(p)} =\mathbb{A}_f^{(p)}(1)/\hat{\mathbb{Z}}^{(p)}(1) = \mu(\Zpbreve). \]
Using this character, we equip $C(\mathbb{A}_f^\times, \Zpbreve)$
with an action of
\[ \mathbb{A}_f^\times \ltimes \left( C(\mathbb{Q}_p, \Zpbreve) \times (\mathbb{A}_f^{(p)})^\times \right)\]
by 
\[ ((a, f, u) \cdot h) (x) =  h(ax)f(x_p) \tau^{(p)}(u x^{(p)}). \]

\begin{lemma}\label{lemma.kir-equivariant}The action of $\left(T(\mathbb{Q}_p) \ltimes C(\mathbb{Q}_p, \Zpbreve)\right)\times \GL_2(\mathbb{A}_f^{(p)})$ on $\mathbb{V}_b$ preserves $\mathbb{V}^\cusp_b$, and $\Kir$ is equivariant for the actions of $\mathbb{A}_f^\times \ltimes \left( C(\mathbb{Q}_p, \Zpbreve) \times (\mathbb{A}_f^{(p)})^\times \right)$.
\end{lemma}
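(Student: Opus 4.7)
The plan is to unfold both claims on the universal Tate curve of \cref{ss.tate-curve} and to track how each generator of the acting group modifies the canonical trivializations. Both the preservation of cuspidality and the Kirillov equivariance will then reduce to explicit formulas at the level of the total $q$-expansion from \cref{eq.total-q-expansion}.

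For cuspidality, I will use that the action of $\left(T(\mathbb{Q}_p) \ltimes C(\mathbb{Q}_p, \Zpbreve)\right) \times \GL_2(\mathbb{A}_f^{(p)})$ on $\Ig^b$ only modifies the trivializations $\varphi_p$ and $\varphi^{(p)}$, never the underlying elliptic curve. Hence, for any auxiliary $(t,g) \in T(\mathbb{Q}_p) \times \GL_2(\mathbb{A}_f^{(p)})$ and any element $\tilde{g}_0$ of our subgroup, the pullback of $(t,g)\cdot(\Tate(q), \varphi_{p,\can}, \varphi^{(p)}_{\can})$ under $\tilde{g}_0$ is again a tuple of the form $(\Tate(q), \varphi'_p, (\varphi^{(p)})')$, possibly up to multiplication by a $\Zpbreve$-scalar factor coming from the Fourier part of the action. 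In particular, the total $q$-expansion of $\tilde{g}_0 \cdot h$ still lies in $\Zpbreve[[q^{\mathbb{Q}_{>0}}]]^\wedge$ with vanishing constant term whenever $h$ is cuspidal, so $\tilde{g}_0 \cdot h \in \mathbb{V}_b^{\cusp}$.

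For the Kirillov equivariance, I will compute each generator's effect on the coefficient of $q$ at a point $x \in \mathbb{A}_f^\times$. First, $a \in \mathbb{A}_f^\times$ is embedded as $\diag(a_p,1) \in T(\mathbb{Q}_p)$ together with $\diag(a^{(p)},1) \in \GL_2(\mathbb{A}_f^{(p)})$, and its action simply translates the basepoint in $\mathbb{A}_f^\times$, giving $\Kir(a\cdot h)(x) = \Kir(h)(ax)$. Second, $u$ in the upper-right of $\GL_2(\mathbb{A}_f^{(p)})$ modifies $\varphi^{(p)}_{\can}$ by $\begin{bmatrix}1 & u\\0 & 1\end{bmatrix}$; combined with the Weil pairing and the fixed trivialization of $\hat{\mathbb{Z}}^{(p)}(1)$ recorded by $\tau^{(p)}$, this multiplies the coefficient of $q^1$ by $\tau^{(p)}(u x^{(p)})$. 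Third, $f \in C(\mathbb{Q}_p, \Zpbreve)$ acts via the Fourier dual of the $\tilde{\mu}_{p^\infty}$-action of \cref{prop.universal-cover-duals}; on the Tate curve, the $\tilde{\mu}_{p^\infty}$-orbit through $(\Tate(q), \varphi_{p,\can})$ is parametrized by the universal character, and specialization at the basepoint $x$ identifies the Fourier variable with $x_p \in \mathbb{Q}_p$, producing the factor $f(x_p)$. Multiplying these three effects gives the asserted formula $((a,f,u)\cdot h)(x) = h(ax) f(x_p) \tau^{(p)}(u x^{(p)})$.

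The main obstacle is the Fourier-theoretic identification in the third case: one needs to verify that when the $\tilde{\mu}_{p^\infty}$-action on $\mathbb{V}_b$ is restricted to the image of the Tate curve in $\Ig^b$, the resulting universal character evaluates precisely at the $p$-component $x_p$ of the Kirillov variable. This is a matter of pinning down the compatibility between the description $\tilde{\mu}_{p^\infty} = \mathcal{H}om(\mathbb{Q}_p, \mu_{p^\infty})$ and the canonical trivialization $\varphi_{p,\can}$, together with the orientation conventions fixed in \cref{ss.action-of-cont-functions}. Once this compatibility is pinned down, the three factors combine cleanly as above.
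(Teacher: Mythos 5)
Your proposal follows essentially the same route as the paper: the paper's own proof observes that cuspidality preservation and $\mathbb{A}_f^\times$-equivariance for the geometric parts of the action are automatic from the definition of the total $q$-expansion map, and defers the remaining equivariance (for $C(\mathbb{Q}_p, \Zpbreve)$ and the unipotent prime-to-$p$ part) to an explicit $q$-expansion computation in an earlier reference. Your generator-by-generator unfolding on the Tate curve is precisely the content of that deferred computation.

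One conceptual point deserves sharpening in your cuspidality paragraph. You treat an arbitrary $\tilde{g}_0$ in $\left(T(\mathbb{Q}_p) \ltimes C(\mathbb{Q}_p, \Zpbreve)\right)\times \GL_2(\mathbb{A}_f^{(p)})$ as if it pulls back Tate points to Tate points, but the $C(\mathbb{Q}_p, \Zpbreve)$-factor does \emph{not} act on $\Ig^b$: it is an algebra action on the module $\mathbb{V}_{b}$ obtained by contracting with the $\tilde{\mu}_{p^\infty}$-coaction, not a geometric group action. Your hedge about a ``$\Zpbreve$-scalar factor coming from the Fourier part'' gestures at this, but the clean argument is really two-step. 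For the geometric factors $T(\mathbb{Q}_p)$ and $\GL_2(\mathbb{A}_f^{(p)})$, the orbit definition of the total $q$-expansion map makes preservation of cuspidality tautological. For the $C(\mathbb{Q}_p,\Zpbreve)$-factor, one first observes that the $\tilde{\mu}_{p^\infty}$-\emph{coaction}, evaluated on the universal Tate point, lands in cuspidal $q$-series tensor $\mathcal{O}(\tilde{\mu}_{p^\infty})$ (because $\tilde{\mu}_{p^\infty}$ modifies only $\varphi_p^{\et}$, rescaling the $q^{1/p^n}$-roots, which is a coefficient-wise operation fixing the constant term); contraction with $f \in C(\mathbb{Q}_p,\Zpbreve)$ then manifestly preserves the cuspidality. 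Once you phrase it this way, the factor $f(x_p)$ in your Kirillov computation and the cuspidality claim both drop out of the same coefficient-wise description, so the two halves of your argument for the $p$-local unipotent part should be merged rather than handled by separate heuristics.

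Finally, when you specialize the Fourier variable to $x_p$ in your third bullet, the verification you flag as ``the main obstacle'' — that the universal $\tilde{\mu}_{p^\infty}$-character on the Tate orbit evaluates at the $p$-component of the Kirillov variable — is exactly the compatibility established by the computation the paper cites (cf.\ \cref{remark.classical-q-expansions}). It reduces to the identity that the $\tilde{\mu}_{p^\infty}$-action scales the $q^n$-coefficient by the value of the universal character at $n$, which under the duality of \cref{prop.universal-cover-duals} is multiplication by $f(n)$. Spelling this out would close the gap you identified.
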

\begin{proof}
That the cuspidal part is preserved by the action of $T(\mathbb{Q}_p) \times \GL_2(\mathbb{A}_f^{(p)})$ is automatic from the definition of the total $q$-expansion map \cref{eq.total-q-expansion}, and the $\mathbb{A}_f^\times$-equivariance of $\Kir$ is built into the definition. Finally, the preservation of the cuspidal part and equivariance for the action of $C(\mathbb{Q}_p, \Zpbreve) \times (\mathbb{A}_f^{(p)})^\times $ can be established by an explicit computation on $q$-expansions as in \cite[\S6.5]{Howe.AUnipotentCircleActionOnpAdicModularForms} (cf. \cref{remark.classical-q-expansions}). 
\end{proof}

\begin{remark}\label{remark.classical-q-expansions}
Let $N$ be coprime to $p$. For $K^{(p)}_1(N)$ as in \cref{sss.congruence-subgroups}, let \[ K_{b}^1(N) := \begin{bmatrix} \mathbb{Z}_p^\times & \mathbb{Z}_p(1) \\ 0 & 1 \end{bmatrix}\times K^{(p)}_1(N). \]
We note that $\mathbb{A}_f^\times = \mathbb{Q}_{>0} \times \hat{\mathbb{Z}}^\times.$ 
From the equivariance it is immediate that the restriction of $\Kir$ to $(\mathbb{V}^{\cusp}_b)^{K_b^1(N)} \subseteq \mathbb{V}^\cusp_{\Katz'}$ factors through $C(\mathbb{Z}_{>0}, \breve{\mbb{Z}}_p)$ (in fact, even $C(\mathbb{Z}_{>0}, \mathbb{Z}_p)$), and the value at $n \in \mathbb{Z}_{>0}$ is simply the coefficient of $q^n$ in the usual $q$-expansion of a cuspidal $p$-adic modular form of prime-to-$p$ level $\Gamma_1(N)$.  
\end{remark}

\begin{proposition}\label{prop.Kir-q-exp-principle}
The map $\Kir: \mathbb{V}^\cusp_{b,\Zpbreve} \rightarrow C(\mathbb{A}_f^\times, \Zpbreve)$ is an injection with $\Zpbreve$-flat cokernel. 
\end{proposition}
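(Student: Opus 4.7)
The plan is to bootstrap from Katz's classical $q$-expansion principle for $\mathbb{V}_{\Katz'}^{\cusp}$, which already yields the analogous statement (injectivity with $\Zpbreve$-flat cokernel). The argument descends in two steps, from $\mathbb{V}_b^{\cusp}$ through $\mathbb{V}_{\Mant}^{\cusp}$ to $\mathbb{V}_{\Katz'}^{\cusp}$.

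\emph{Step 1: reducing from $\mathbb{V}_b^{\cusp}$ to $\mathbb{V}_{\Mant}^{\cusp}$.} I would use the Frobenius completion description \cref{eq.Vb-as-frobenius completion}: $\mathbb{V}_b$ is the $p$-adic completion of $\varinjlim_F \mathbb{V}_{\Mant}$, where the $k$-th copy corresponds to $\mathbb{V}_b^{p^k \mathbb{Z}_p(1)}$ via $f \mapsto \diag(p^k, 1)^* f$. By the $\mathbb{A}_f^\times$-equivariance of $\Kir$ (\cref{lemma.kir-equivariant}), this isomorphism transports on the Kirillov side to the translation $h(x) \mapsto h(p^k x)$, so the Kirillov image of $\mathbb{V}_b^{p^k \mathbb{Z}_p(1), \cusp}$ is supported on $p^{-k}(\mathbb{Z}_p \setminus 0) \times (\mathbb{A}_f^{(p)})^\times$. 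Thus, once $\Kir|_{\mathbb{V}_{\Mant}^{\cusp}}$ is known to be a strict injection with $\Zpbreve$-flat cokernel, the same holds at every finite level; both properties pass to the colimit, and strictness allows them to survive $p$-adic completion.

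\emph{Step 2: reducing from $\mathbb{V}_{\Mant}^{\cusp}$ to $\mathbb{V}_{\Katz'}^{\cusp}$.} Using the $T(\mathbb{Z}_p)$-equivariant identification $\mathbb{V}_{\Mant} = C(\mathbb{Z}_p^\times, \mathbb{V}_{\Katz'})$ of \cref{eq.Vmant-as-functions}, an element $\phi \in \mathbb{V}_{\Mant}^{\cusp}$ is a continuous $\mathbb{V}_{\Katz'}^{\cusp}$-valued function on $\mathbb{Z}_p^\times$. A direct calculation on the Tate curve, consistent with the fact that the action of $\diag(p, 1)$ on the Kirillov side is the usual $U_p$-operator, identifies the value $\Kir(\phi)(p^n a, z^{(p)})$, for $a \in \mathbb{Z}_p^\times$, $n \geq 0$, $z^{(p)} \in (\mathbb{A}_f^{(p)})^\times$, with the $p^n$-th classical Fourier coefficient of a $\mathbb{V}_{\Katz'}^{\cusp}$-element built from $\phi(a)$, evaluated at the prime-to-$p$ level structure determined by $z^{(p)}$. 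Consequently $\Kir(\phi) = 0$ forces every classical Fourier coefficient of every $\phi(a)$ to vanish at every prime-to-$p$ translate, and Katz's classical principle yields $\phi(a) = 0$ for all $a$, hence $\phi = 0$. An analogous fiber-by-fiber argument handles the $\Zpbreve$-flat cokernel statement.

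\emph{Main obstacle.} The crux is the explicit identification in Step 2 of the Kirillov values with classical Fourier coefficients; this requires carefully tracking how the torus elements $\diag(p^n, 1)$ rescale the canonical trivializations $\varphi^\circ_{p,\can}$ and $\varphi^{\et}_{p,\can}$ of the Tate curve against the coordinate $q$, and how the isomorphism of \cref{eq.Vmant-as-functions} intertwines the $T(\mathbb{Z}_p)$-action with translation in the $\mathbb{Z}_p^\times$ variable. Step 1 is largely formal once Step 2 is in place, the only care needed being the routine check that $p$-adic completion preserves both injectivity and flat cokernel when the cokernel is already flat at each finite level.
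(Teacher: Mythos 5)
Your plan — bootstrap from Katz — is the same fundamental idea the paper uses, but the way you organize the bootstrap differs meaningfully from the paper's argument, and in one place your version has a real gap that the paper's organization is designed to avoid.

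The paper does not propagate the integral statement through the tower $\mathbb{V}_{\Katz'} \to \mathbb{V}_\Mant \to \mathbb{V}_b$. Instead, it first makes a Tor reduction: it shows that the whole proposition (injectivity \emph{and} $\Zpbreve$-flat cokernel) follows from injectivity of $\Kir \otimes \overline{\mathbb{F}}_p$ alone, by running the long exact sequence for $\mathrm{Tor}^\bullet_{\Zpbreve}(-, \overline{\mathbb{F}}_p)$ against $0 \to \ker\Kir \to \mathbb{V}_{b,\Zpbreve}^\cusp \to \Im\Kir \to 0$ and $0 \to \mathbb{V}_{b,\Zpbreve}^\cusp \to C(\mathbb{A}_f^\times,\Zpbreve) \to \mathrm{coker}(\Kir) \to 0$, and using that $\ker \Kir$ is $p$-adically complete. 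After that, everything happens on the perfect special fiber $\mathrm{Ig}^b_{\overline{\mathbb{F}}_p}$, where the descent to $\Katz'$ is handled geometrically: the paper argues (i) the action of $\mathbb{Q}_{>0}^\times$ recovers all $q$-expansion coefficients from $\Kir$, and (ii) $\hat{\mathbb{Z}}^\times$ acts transitively on $\pi_0(\Ig^b_{\overline{\mathbb{F}}_p})$, where (ii) is reduced to the known statement on the mod $p$ modular curve by the observation that the Frobenius maps in the tower \cref{eq.ig-b-frobenius-lifts} are inseparable and so do not change connected components. Because the colimit is taken mod $p$, no $p$-adic completion ever intervenes, and the flat-cokernel statement has already been dispatched by the Tor argument.

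Against this backdrop, the gap in your Step 1 is the sentence ``strictness allows them to survive $p$-adic completion.'' It is true that an injection $M \hookrightarrow N$ of $\Zpbreve$-modules with $p$-torsion-free cokernel induces an injection $\widehat{M} \hookrightarrow N$ with $p$-torsion-free cokernel when $N$ is already complete (one argues via $\varprojlim$ of $0 \to M/p^n \to N/p^n \to C/p^n \to 0$ and the fact that $\widehat{C}$ is again $p$-torsion free), but this is precisely the content of the paper's Tor lemma, and presenting it as a one-line appeal to ``strictness'' hides the argument rather than giving it. Likewise, for Step 1 to even apply you need to know that $\mathbb{V}_b^{\cusp}$ is the $p$-adic completion of $\varinjlim_F \mathbb{V}_\Mant^{\cusp}$, not just that \cref{eq.Vb-as-frobenius completion} holds for the full function spaces; that is an extra compatibility one has to check against the definition of $\mathbb{V}_b^{\cusp}$ via the total $q$-expansion map.

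Your Step 2 is closer in spirit to the paper, but the paper handles the connected-components issue explicitly, whereas your formulation implicitly relies on the data $(p^n a, z^{(p)})$ for fixed $a$ hitting one cusp in every component of $\Ig_{\Katz',\overline{\mathbb{F}}_p}$; this is correct, but it is exactly the point where the paper invokes the identification of $\pi_0$ of the Igusa variety with $\pi_0$ of the mod $p$ modular curve and the transitivity of the $\hat{\mathbb{Z}}^\times$-action. If you keep your route, you should surface that step rather than absorb it into ``a direct calculation on the Tate curve.'' In summary: same source (Katz), different organization; the paper's Tor-reduction-then-work-mod-$p$ scheme buys you a cleaner treatment of both the completion in Step 1 and the component bookkeeping in Step 2, and you should either adopt it or spell out the two points above.
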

\begin{proof}
We argue the injectivity with flat cokernel in essentialy the same way as the $q$-expansion principle of Katz \cite[Lemma on bottom of p.499]{Katz.p-adic-L-functions-via-moduli-of-elliptic-curves}. We first observe that it will suffice to establish injectivity of $\Kir \otimes \overline{\mathbb{F}}_p$: let us assume this injectivity and explain how to deduce the other statements. Note that being $\Zpbreve$-flat is equivalent to being $p$-torsion free so, since $C(\mathbb{A}_f^\times, \Zpbreve)$ is $p$-torsion free, the image of $\Kir$ is $p$-torsion free. Thus applying the $\mathrm{Tor}_{\Zpbreve}^\bullet(-, \overline{\mathbb{F}}_p)$-sequence to 
\[ 0 \rightarrow \ker\, \Kir \rightarrow \mathbb{V}^\cusp_{b,\Zpbreve} \rightarrow \Im \Kir \rightarrow 0, \]
we find that 
\[ (\ker \Kir) \otimes \overline{\mathbb{F}}_p = \ker (\Kir \otimes \overline{\mathbb{F}}_p)\]
and thus by assumption is zero. But $\ker \Kir$ is $p$-adically complete as the kernel of a map of $p$-adically complete modules, so this implies also that $\ker \Kir=0$.  Now applying the 
$\mathrm{Tor}_{\Zpbreve}^\bullet(-, \overline{\mathbb{F}}_p)$-sequence to the exact sequence
\[ 0 \rightarrow \mathbb{V}^\cusp_{b,\Zpbreve} \xrightarrow{\Kir} C(\mathbb{A}_f^\times, \Zpbreve) \rightarrow \mathrm{coker}(\Kir) \rightarrow 0 \]
we find $\mathrm{Tor}^1(\mathrm{coker}(\Kir), \overline{\mathbb{F}}_p)=\ker (\Kir \otimes \overline{\mathbb{F}_p})$ and thus is zero, so that $\mathrm{coker}(\Kir)$ is $p$-torsion free.  

It remains to establish the injectivity modulo $p$. This follows by an extension of the argument in \cite[Start of section XI]{Katz.p-adic-L-functions-via-moduli-of-elliptic-curves}: the action of $\mathbb{Q}_{>0}^\times \leq \mathbb{A}_f^\times = \mathbb{Q}_{>0}^\times \times \hat{\mathbb{Z}}^\times $ can be used to translate any $q$-expansion coefficient to the first one as encoded by $\Kir$. We will argue below that $\Kir$ encodes the $q$-expansion at a cusp in each connected component of $\Ig^b_{\overline{\mathbb{F}}_p}$, giving the injectivity. 

To see that each connected component is hit, we show the action of $\hat{\mathbb{Z}}^\times$ is transitive on the connected components of $\Ig^b_{\overline{\mathbb{F}}_p}$: This transitivity holds on $\Ig_{\Mant,{\overline{\mathbb{F}}_p}}$ by combining \cref{eq.Vmant-as-functions} with the fact that, for each $N$ coprime to $p$,
$\Ig_{\Katz',\overline{\mathbb{F}}_p}/{K^{(p)}(N)}$ has the same connected components as the full mod $p$ modular curve of level $K^{(p)}(N)$ over $\overline{\mathbb{F}}_p$ (as in  \cite[Remark on p.499]{Katz.p-adic-L-functions-via-moduli-of-elliptic-curves}). Indeed, these latter are identifed with $\mu_N(\overline{\mathbb{F}}_p)^\times$ by the Weil pairing and thus they transform by the natural transitive action of $\hat{\mathbb{Z}}^\times$ through $(\mathbb{Z}/N\mathbb{Z})^\times$ (cf. the computation over $\mathbb{Q}$ in \cref{sss.mf-twists}). This description of the connected components  persists to $\Ig_{b,\overline{\mbb{F}_p}}$ since the inseparable maps in \cref{eq.ig-b-frobenius-lifts} do not change the connected components. 
\end{proof}

\subsection{Evaluation of modular forms}\label{ss.evaluation}

\subsubsection{}We first recall the usual formalism for evaluation of modular forms to $p$-adic modular forms but with notation adapted to our setting. The universal elliptic curve over $\Ig_{\Katz'}$ is equipped with a canonical basis $\omega_\can$ for $\omega_{E^\vee}$: the isomorphism $\varphi_p^\et: T_p E[p^\infty]^\et \cong \mathbb{Z}_p$ is equivalent to an isomorphism $E^\vee[p^\infty]^\circ \xrightarrow{\sim} \mu_{p^\infty}$, and we pullback the invariant differential $\frac{dt}{t}$ on $\mu_{p^\infty}$ along this map to obtain $\omega_\can$. If we algebraize to an elliptic curve over $\Spec \mathbb{V}_{\Katz}'$ and invert $p$, then, for 
\[  K_1(p^\infty)=\begin{bmatrix} \mathbb{Z}_p^\times & \mathbb{Z}_p \\ 0 & 1\end{bmatrix} \leq \GL_2(\mathbb{Z}_p)\]
we also have level $K_1(p^\infty)$ structure coming from $\varphi_p^\et$, and thus we obtain a classifying map to the associated infinite level scheme-theoretic modular curve over $\mathbb{Q}_p$ (see \cref{sss.modular-curves}),
\begin{equation}
\label{eq.classifying-map-classical}	
 \Spec \mathbb{V}_{\Katz',\mathbb{Q}_p} \rightarrow Y_{K_1(p^\infty) \times \{1\}, \mathbb{Q}_p} 
\end{equation}
where $\{1\}$ is the trivial subgroup of $\GL_2(\mathbb{A}_f^{(p)})$ . 

Pulling back sections along \cref{eq.classifying-map-classical} and evaluating on $\omega_\can$, we obtain, for each $k \in \mathbb{Z}$, a map 
\begin{equation}
\label{eq.evaluation-classical-all-sections}	
H^0(Y_{K_1(p^\infty)\times \{1\}, \mathbb{Q}_p}, \omega_{E^\vee}^k) \rightarrow  \mathbb{V}_{\Katz', \mathbb{Q}_p}. 
\end{equation}
It is $\GL_2(\mathbb{A}_f^{(p)})$-equivariant by construction; it is also equivariant for the induced actions of $1 \times \mathbb{Z}_p^\times$, so long as on the left we twist the natural smooth action by $(1, z) \mapsto z^{-k}$ (coming from the non-trivial action on $\omega_\can$). Moreover, the map is compatible with $q$-expansions, so it is an injection and induces 
\begin{equation}\label{eq.classical-eval-map} S_{k,\mathbb{Q}_p}^{K_1(p^\infty)} \hookrightarrow \mathbb{V}_{\Katz', \mathbb{Q}_p}^{\cusp}.\end{equation} 

\newcommand{\Zpab}{\mathbb{Z}_p^\ab}
\newcommand{\Zpcyc}{\mathbb{Z}_p^\cyc}
\newcommand{\Qpcyc}{\mathbb{Q}_p^\cyc}

\subsubsection{}
If we base change to $\mathbb{Z}_p^\cyc$, the completion of the ring of integers in $\mathbb{Q}_p(\mu_{p^\infty}(\mathbb{C}_p))$, and fix also a trivialization of $\mathbb{Z}_p(1)=T_p(\mu_{p^\infty})$ over $\mathbb{Q}_p^\cyc$ (i.e. a compatible system of $p$-power roots of unity in $\mathbb{C}_p$), then we obtain a classifying map 
\begin{equation}\label{eq.classifying-map-full-level}\Spec \mathbb{V}_{b, \mathbb{Q}_p^\cyc} \rightarrow Y_{\{1\}, \mathbb{Q}_p} \end{equation}
where $Y_{\{1\}}$ is the scheme-theoretic modular curve at full infinite level (i.e. $\{1\}$ denotes the trivial subgroup of $\GL_2(\mathbb{A}_f)$): for the level structure at $p$, we compose the map induced by $\varphi_p$, $V_p  E \xrightarrow{\sim} V_p(\mu_{p^\infty} \times \mathbb{Q}_p/\mathbb{Z}_p) $ with the trivialization  
\[ V_p (\mu_{p^\infty} \times \mathbb{Q}_p/\mathbb{Z}_p)= \mathbb{Q}_p(1) \times \mathbb{Q}_p = \mathbb{Q}_p^2\]
 over $\Spec \mathbb{Q}_p^\cyc$ obtained from our fixed trivialization of $\mathbb{Z}_p(1)$ over $\mathbb{Q}_p^\cyc$. Pulling back along \cref{eq.classifying-map-full-level} and evaluating on $\omega_\can$, we obtain an extension of \cref{eq.classical-eval-map}, 
\begin{equation}\label{eq.full-eval-map-all-sections}  H^0(Y_{\{1\}, \mathbb{Q}_p^\cyc}, \omega_{E^\vee}^k) \hookrightarrow  \mathbb{V}_{\Katz', \Qpcyc}. \end{equation}

\subsubsection{} Again \cref{eq.full-eval-map-all-sections} is $\GL_2(\mathbb{A}_f^{(p)})$-equivariant by construction. It is also $B(\mathbb{Q}_p)$-equivariant when we map this group into $\tilde{G}_b=\begin{bmatrix} \mathbb{Q}_p^\times & \tilde{\mu}_{p^\infty} \\ 0 & \mathbb{Q}_p^\times \end{bmatrix}$ by the identity on the diagonal and, in the upper right corner, by using the map  induced by the trivialization of $\mathbb{Z}_p(1)$,
\[ \mathbb{Q}_p=\mathbb{Z}_p[1/p]=\mathbb{Z}_p(1)[1/p]\hookrightarrow \tilde{\mu}_{p^\infty}(\mathbb{Z}_p^\cyc), \]
 so long as we twist the domain of \cref{eq.full-eval-map-all-sections} by the character $\begin{bmatrix}z_1 & u \\ 0 & z_2 \end{bmatrix} \mapsto z_2^{-k}$, again because of the non-trivial action on $\omega_\can$. 
 
 The map \cref{eq.full-eval-map-all-sections} is compatible with $q$-expansions, so it is an injection and induces an equivariant map
\begin{equation}\label{eq.eval-map-def} \eval_k: S_{k, \Qpcyc} \hookrightarrow \mathbb{V}_{b,\Qpcyc}^{\cusp}. \end{equation}
The map of \cref{eq.classical-eval-map} is simply the restriction of $\eval_k$ to the $K_1(p^\infty)$-invariants (and to $S_{k,\mbb{Q}_p} \subseteq S_{k,\Qpcyc}$).

\begin{lemma}\label{lemma.eval-dense} For any $k \geq 2$, the image of $S_{k,\Qpcyc}$ under $\eval_k$ is dense in $\mathbb{V}_{b, \Qpcyc}^\cusp$.  	
\end{lemma}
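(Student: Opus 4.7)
The plan is to bootstrap from the classical density theorem (attributed to Shimura) asserting that weight-$k$ cuspidal modular forms of level $\Gamma_1(p^n)$, as $n$ varies, are dense in $\mathbb{V}_{\Katz, \mbb{Q}_p}^\cusp$. The key structural inputs are the two-stage reconstruction of $\mathbb{V}_{b,\Qpcyc}^\cusp$ recalled in \cref{s.ordinary-igusa-varieties}: by \cref{eq.Vb-as-frobenius completion}, $\mathbb{V}_{b,\Qpcyc}^\cusp$ is the $p$-adic completion of the Frobenius colimit of $\mathbb{V}_{\Mant,\Qpcyc}^\cusp$, while by \cref{eq.Vmant-as-functions}, $\mathbb{V}_{\Mant,\Qpcyc}^\cusp \cong C(\mathbb{Z}_p^\times, \mathbb{V}_{\Katz',\Qpcyc}^\cusp)$.

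First I would reduce to density in $\mathbb{V}_{\Mant,\Qpcyc}^\cusp$. By \cref{cor.density-zp1} applied to $\mathbb{V}_{b,\Qpcyc}^\cusp$, the $\mathbb{Z}_p(1)$-smooth vectors $\bigcup_n (\mathbb{V}_{b,\Qpcyc}^\cusp)^{p^n \mathbb{Z}_p(1)}$ are dense. Since $\eval_k$ is $B(\mathbb{Q}_p)$-equivariant (up to the weight-$k$ character twist), the element $\diag(p^{-n}, 1) \in T(\mathbb{Q}_p) \subseteq \tilde{G}_b$ identifies the $p^n\mathbb{Z}_p(1)$-invariant stage with $\mathbb{V}_{\Mant,\Qpcyc}^\cusp$ and carries $\eval_k(S_{k,\Qpcyc})$ into itself; it thus suffices to show density of $\eval_k(S_{k,\Qpcyc}) \cap \mathbb{V}_{\Mant,\Qpcyc}^\cusp$ in $\mathbb{V}_{\Mant,\Qpcyc}^\cusp$.

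Next I would exploit the decomposition $\mathbb{V}_{\Mant,\Qpcyc}^\cusp = C(\mathbb{Z}_p^\times, \mathbb{V}_{\Katz',\Qpcyc}^\cusp)$. The restriction of $\eval_k$ to the $K_1(p^\infty)$-invariants $S_{k,\Qpcyc}^{K_1(p^\infty)}$ lands in the constant $\mathbb{V}_{\Katz',\Qpcyc}^\cusp$-valued functions. Approximating an arbitrary continuous function on $\mathbb{Z}_p^\times$ by a locally constant one and decomposing the latter into finite-order character eigenspaces, one reduces to approximating eigenfunctions of the form $x \mapsto \chi(x) v$ under the $(1,\mathbb{Z}_p^\times) \subseteq T(\mathbb{Z}_p)$ action, for $\chi : \mathbb{Z}_p^\times \to \Qpcyc^\times$ of finite order (all such $\chi$ being defined over $\Qpcyc$) and $v \in \mathbb{V}_{\Katz',\Qpcyc}^\cusp$. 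Each such eigenfunction is matched, up to the tracked twist, by $\eval_k(f)$ for $f$ a classical cusp form of weight $k$ and appropriate nebentypus at $p$; that one obtains a dense supply of such eigenfunctions follows from Shimura's classical density result applied after twisting by $\chi$, and summing over $\chi$ gives the desired density in $\mathbb{V}_{\Mant,\Qpcyc}^\cusp$.

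The main obstacle is the simultaneous bookkeeping of the twists involved: the weight-$k$ twist on $\eval_k$ coming from the nontrivial action on $\omega_\can$, and the twist in \cref{lemma.vmant-as-functions-equivariance} for the $T(\mathbb{Z}_p)$-action on $C(\mathbb{Z}_p^\times, \mathbb{V}_{\Katz',\Qpcyc}^\cusp)$. These are compatible by construction, so once the matching of $\chi$-eigenspaces has been traced carefully, the density is a formal consequence of Shimura's classical result and the approximation of continuous functions on $\mathbb{Z}_p^\times$ by locally constant ones.
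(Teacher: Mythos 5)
Your approach is essentially the paper's, with the three reduction stages reshuffled: the paper pushes density upward from $\mathbb{V}_{\Katz'}$ to $\mathbb{V}_{\Mant}$ (via \cref{eq.Vmant-as-functions}) and then to $\mathbb{V}_b$ (via \cref{cor.density-zp1}), whereas you first descend from $\mathbb{V}_b$ to $\mathbb{V}_{\Mant}$ using the $\diag(p^{\pm n},1)$-conjugation and then attack the Mantovan-level density. Your nebentypus/character-eigenspace decomposition of $C(\mathbb{Z}_p^\times,\mathbb{V}_{\Katz',\Qpcyc}^\cusp)$ is the same move the paper makes via connected components of $Y_{U(\mathbb{Z}_p)K_1^{(p)}(N),\Qpcyc}$ (characters of $\mathbb{Z}_p^\times$ being dual to those extra components), so that part is fine in spirit.

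There is, however, a gap at the prime-to-$p$ level. Shimura's density result, in the form the paper invokes it, is a statement at level $K_1^{(p)}(N)$: $S_k^{B_1(\mathbb{Z}_p)K_1^{(p)}(N)}$ is dense in $\mathbb{V}_{\Katz',\Qpcyc}^{\cusp,\,K_1^{(p)}(N)}$. Your argument concludes by appealing to density of the $\GL_2(\mathbb{A}_f^{(p)})$-smooth vectors, but those are the $K^{(p)}(N)$-invariants for varying $N$, which strictly contain the $K_1^{(p)}(N)$-invariants. An additional step is needed to propagate density from $K_1^{(p)}(N)$-level to $K^{(p)}(N)$-level; the paper does this by passing to the intermediate $K_2^{(p)}(N)$ of matrices upper-triangular unipotent mod $N$ (by a connected-components twist entirely parallel to your nebentypus argument at $p$) and then conjugating $K_2^{(p)}(N^2)$ into $K^{(p)}(N)$ via $\diag(N,1)$. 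Your character-decomposition approach should adapt, but as written the proposal is silent on the prime-to-$p$ level. A smaller point: for the eigenspace decomposition you want the pure translation action of $(\mathbb{Z}_p^\times,1)$, not $(1,\mathbb{Z}_p^\times)$: by \cref{eq.Vmant-as-functions} the latter also acts on the target $\mathbb{V}_{\Katz'}$, so $x\mapsto\chi(x)v$ is a $(1,\mathbb{Z}_p^\times)$-eigenvector only when $v$ itself is one.
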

\begin{proof}
	For $N$ coprime to $p$,  $S_{k,\Qpcyc}^{B_1(\mathbb{Z}_p)K_1^{(p)}(N)}$ is the union over $r \geq 0$ of the spaces of classical cusp forms of level $\Gamma_1(p^r N)$.  The density of 
\[ S_k^{B_1(\mathbb{Z}_p)K_1^{(p)}(N)} \hookrightarrow \mathbb{V}_{\Katz', \Qpcyc}^{\cusp,K_1^{(p)}(N)} = \mathbb{V}_{\Katz', \Zpcyc}^{\cusp,K_1^{(p)}(N)}[1/p]\]
is then a result of Hida using the duality between $p$-adic modular forms and their completed Hecke algebras and a comparison of the completed Hecke algebras through Eichler-Shimura obtained by an argument with singular homology (the specific density result is claimed, e.g., near the top of \cite[p. 551]{Hida.GaloisRepresentationsIntoGL2ZPX} and attributed to Shimura in the Hecke algebra formulation in \cite[Immediately above Theorem 2.1]{Hida.OnPadicHeckeAlgebrasForGL2}; see also \cite[Theorem III.3.2]{Gouvea.ArithmeticOfpAdicModularForms}). 

We obtain identifications from \cref{eq.Vmant-as-functions} and \cref{lemma.vmant-as-functions-equivariance}, 
\[ \left(\mathbb{V}_{\Mant, \Qpcyc}^\cusp \right)^{K_1^{(p)}(N)} = C\left(\mathbb{Z}_p^\times, \left(\mathbb{V}_{\Katz', \Qpcyc}^\cusp \right)^{K_1^{(p)}(N)}\right). \]
Thus from the density established above we deduce also the density of the image of 
\[  S_{k, \Qpcyc}^{U(\mathbb{Z}_p)K_1^{(p)}(N)} \rightarrow \left(\mathbb{V}_{\Mant, \Qpcyc}^\cusp \right)^{K_1^{(p)}(N)}. \]
where $U(\mathbb{Z}_p)$ is the upper triangular unipotent subgroup. Indeed, the closure of the image contains any locally constant function (because we are also introducing the same $\mathbb{Z}_p^\times$-worth of connected components when we pass to the modular curve $Y_{U(\mathbb{Z}_p)K_1^{(p)}(N), \Qpcyc}$; cf. the proof of \cref{prop.Kir-q-exp-principle}), and these are dense. 

Writing 
\[ U_n:=\begin{bmatrix}1 & p^n \mathbb{Z}_p \\ 0& 1\end{bmatrix}, \] and applying the equivariance of the evaluation map to the action of $\begin{bmatrix}p & 0\\ 0& 1\end{bmatrix}$, we obtain the density of $S_{k, \Qpcyc}^{U_nK_1^{(p)}(N)}$ in $\left(\mathbb{V}_{b,\Qpcyc}^{\cusp}\right)^{p^n\mathbb{Z}_p(1) \times K_1^{(p)}(N)}.$  The density of the image of $S_{k,\Qpcyc}^{K_1^{(p)}(N)}$ in $(\mathbb{V}_b^\cusp)^{K_1^{(p)}(N)}$ then follows from \cref{cor.density-zp1} (or from the more explicit form in \cref{eq.Vb-as-frobenius completion} and surrounding discussion). 

The argument to pass from density at level $K_1^{(p)}(N)$ to density at level $K^{(p)}(N)$ (matrices congruent to the identity mod $N$) is similar but more elementary: for any $N$ prime-to-$p$, one passes from level $K_1^{(p)}(N)$ to the matrices $K_2^{(p)}(N)$ that are upper triangular unipotent mod $N$ by considering the connected components then uses the action of $\begin{bmatrix} N & 0 \\  0 & 1 \end{bmatrix}$,
to conjugate $K_2^{(p)}(N^2)$ into a subgroup of $K^{(p)}(N)$.

Finally, since the vectors preserved by $K^{(p)}(N)$ for some $N$ (i.e. the smooth vectors for the $\GL_2(\mathbb{A}_f^{(p)})$-action)  are dense in $\mathbb{V}_{b,\Qpcyc}^{\cusp}$ (because $\Ig^b=\varprojlim_N \Ig^b/K^{(p)}(N)$ and each term is an affine $p$-adic formal scheme), we obtain the result. 
\end{proof}

\subsection{The completed Kirillov model}

Note that $\Qpab$, the completion of the maximal abelian extension of $\mathbb{Q}_p$ in $\mathbb{C}_p$, contains $\Qpcyc$ and $\Qpbreve$. Thus, after base change to $\Qpab$ and making a choice of trivialization of $\hat{\mathbb{Z}}(1)=\mathbb{Z}_p(1) \times \hat{\mathbb{Z}}^{(p)}(1)$, the maps $\eval_k$ and the map $\Kir$ are all defined. 

We may view our choice of a trivialization of $\hat{\mathbb{Z}}(1)$ as the choice of a non-degenerate locally constant $\Qpab$-valued character $\tau$ of $\mathbb{A}_f^\times$, 
\[ \tau: z \mapsto \overline{z} \in \mathbb{A}_f^\times/\hat{\mathbb{Z}}^\times = \mathbb{A}_f^\times(1)/\hat{\mathbb{Z}}^\times(1).\]
We obtain an associated Kirillov action of $\begin{bmatrix}\mathbb{A}_f^\times & \mathbb{A}_f \\ 0 & 1\end{bmatrix}$ on $C^\bdd(\mathbb{A}_f^\times, \Qpcyc)$ by 
\[ \left(\begin{bmatrix} a & u \\ 0 & 1  \end{bmatrix} \cdot f\right) (z)=f(az)\tau(uz)\]

\begin{theorem}\label{theorem.completion-kirillov-model}
The composition $\Kir \circ \eval_k$ is an $\begin{bmatrix}\mathbb{A}_f^\times & \mathbb{A}_f \\ 0 & 1\end{bmatrix}$-equivariant injection for the Kirillov action on $C^\bdd(\mathbb{A}_f^\times, \Qpcyc)$ associated to $\tau$. For any $k \geq 2$, $\Kir \circ \eval_k$ identifies $\mathbb{V}_{b, \Qpab}^\cusp$ with the completion of this Kirillov model inside $C^\bdd(\mathbb{A}_f^\times, \mathbb{Q}_p^\ab)$ (which is equipped with its Banach topology via the sup norm). 
\end{theorem}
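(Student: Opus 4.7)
The plan is to verify the two claims in sequence. The first (equivariance and injectivity) follows essentially formally by combining the built-in equivariance of $\eval_k$ with \cref{lemma.kir-equivariant} and \cref{prop.Kir-q-exp-principle}. The second (completed Kirillov model identification, for $k \geq 2$) combines the density result of \cref{lemma.eval-dense} with the fact that $\Kir$ is a closed isometric embedding coming from the integral $q$-expansion principle of \cref{prop.Kir-q-exp-principle}.

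For the equivariance, I would note that $\eval_k$ is $B(\mbb{Q}_p) \times \GL_2(\mathbb{A}_f^{(p)})$-equivariant by construction, where $B(\mbb{Q}_p) \hookrightarrow \tilde G_b$ uses the identity on the diagonal and $\mbb{Q}_p = \mathbb{Z}_p(1)[1/p] \hookrightarrow \tilde{\mu}_{p^\infty}(\mathbb{Z}_p^{\cyc})$ on the unipotent. Combining with \cref{lemma.kir-equivariant}, the composition $\Kir \circ \eval_k$ restricted to the mirabolic $\begin{bmatrix}\mathbb{A}_f^\times & \mathbb{A}_f \\ 0 & 1\end{bmatrix}$ is equivariant as claimed: the diagonal $\mathbb{A}_f^\times$ translates the function (no $\omega_{\can}$-twist appears since the lower-right entry is $1$); prime-to-$p$ upper-right entries act through $\GL_2(\mathbb{A}_f^{(p)})$ and give multiplication by $\tau^{(p)}(u^{(p)} x^{(p)})$ via \cref{lemma.kir-equivariant}; and at $p$, the entry $u \in \mbb{Q}_p$ maps to an element of $\tilde{\mu}_{p^\infty}(\mathbb{Z}_p^{\cyc})$ which, under the $p$-adic Fourier duality $\mathcal{O}(\tilde{\mu}_{p^\infty})^* = C(\mbb{Q}_p, \mbb{Z}_p)$ of \cref{prop.universal-cover-duals}, is dual to the character $z_p \mapsto \tau_p(u z_p)$, so the $C(\mbb{Q}_p, \breve{\mathbb{Z}}_p)$-equivariance of $\Kir$ yields multiplication by $\tau_p(u z_p)$. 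Together these give multiplication by $\tau(u x)$, which is exactly the Kirillov action. Injectivity is then immediate: $\eval_k$ is compatible with $q$-expansions on $S_k$ and $\Kir$ is injective by \cref{prop.Kir-q-exp-principle}.

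For the completion identification (for $k \geq 2$), by \cref{lemma.eval-dense} the image of $\eval_k$ is dense in $\mathbb{V}_{b,\mbb{Q}_p^{\cyc}}^\cusp$, and extending scalars to $\mbb{Q}_p^{\ab}$ gives density of the image of $\eval_k$ on $S_{k,\mbb{Q}_p^{\ab}}$ inside $\mathbb{V}_{b,\mbb{Q}_p^{\ab}}^\cusp$ (using $\mathbb{V}_{b,\mbb{Q}_p^{\ab}}^\cusp = \mathbb{V}_{b,\mathbb{Z}_p^{\ab}}^\cusp[1/p]$ and its explicit description as a completed base change). Meanwhile, \cref{prop.Kir-q-exp-principle}, whose argument applies verbatim over $\mathbb{Z}_p^{\ab}$ in place of $\breve{\mathbb{Z}}_p$, yields that $\Kir$ is injective with $p$-torsion-free cokernel at the integral level, so $\Kir(\mathbb{V}_{b,\mathbb{Z}_p^{\ab}}^\cusp) = \Kir(\mathbb{V}_{b,\mbb{Q}_p^{\ab}}^\cusp) \cap C(\mathbb{A}_f^\times, \mathbb{Z}_p^{\ab})$. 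This identifies the Banach norm on $\mathbb{V}_{b,\mbb{Q}_p^{\ab}}^\cusp$ (whose unit ball is $\mathbb{V}_{b,\mathbb{Z}_p^{\ab}}^\cusp$) with the restricted sup norm, so $\Kir$ is an isometric --- hence closed --- embedding. Therefore the closure of $\Kir \circ \eval_k(S_{k,\mbb{Q}_p^{\ab}})$ in $C^\bdd(\mathbb{A}_f^\times, \mbb{Q}_p^{\ab})$ coincides with $\Kir(\mathbb{V}_{b,\mbb{Q}_p^{\ab}}^\cusp)$, as required. The main obstacle, as indicated, is not in the present theorem but in its two key inputs (\cref{lemma.eval-dense} bootstrapped from Hida/Shimura, and \cref{prop.Kir-q-exp-principle} bootstrapped from Katz's $q$-expansion principle); here the most delicate remaining step is the equivariance check at $p$, where matching the geometric $\tilde{\mu}_{p^\infty}$-action through $\eval_k$ with the Fourier-dual $C(\mbb{Q}_p, \breve{\mathbb{Z}}_p)$-action through $\Kir$ requires the identification of $u \in \mbb{Q}_p \hookrightarrow \tilde{\mu}_{p^\infty}(\mathbb{Z}_p^{\cyc})$ with the character $z_p \mapsto \tau_p(u z_p)$ under $p$-adic Fourier duality.
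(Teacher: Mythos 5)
Your proposal is correct and follows essentially the same route as the paper: equivariance from the construction of $\eval_k$ combined with \cref{lemma.kir-equivariant} (with the twist at $p$ invisible because it involves only the lower-right diagonal entry), injectivity from compatibility with $q$-expansions and \cref{prop.Kir-q-exp-principle}, and the completion statement from \cref{lemma.eval-dense} together with $\Kir$ being a closed isometric embedding as guaranteed by the flat cokernel in \cref{prop.Kir-q-exp-principle}. Your explicit unwinding of the $p$-adic Fourier duality at $p$ is a useful expansion of what the paper leaves implicit but matches the intended argument.
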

\begin{proof}
The equivariance follows from \cref{lemma.kir-equivariant} and the description of the equivariance at $p$ for $\eval_k$ given in \cref{ss.evaluation} 
(note, in particular, that the twist at $p$ uses the bottom right diagonal entry, so is invisible to the group action we are considering here). Since $\Kir$ is an isometry onto its closed image (by the injectivity and flatness of the cokernel in \cref{prop.Kir-q-exp-principle}), the rest follows from \cref{lemma.eval-dense}. 
\end{proof}

\begin{remark}  
If instead of fixing a single $k$ we consider $\bigoplus \eval_k$, then one can obtain the density of the image by imitiating the argument in \cref{lemma.eval-dense} starting from Katz's density result for modular forms of level $1$ at $p$ but arbitrary weight \cite{Katz.HigherCongruencesBetweenModularForms}. The method of \cite{Howe.ThepAdicJacquetLanglandsCorrespondenceAndAQuestionOfSerre} provides a different argument to show the image of $\bigoplus \eval_k$ is dense that is more technical but very soft in the sense that it can be applied to establish analogous statements for many more Igusa varieties. 
\end{remark}

\section{Coinvariants and ordinary forms}\label{s.coinvariants-ordinary-lg}

In this section we prove \cref{main.hida-coinvariants} and \cref{maincor.admissibility}.

\subsection{The $U_p$-operator}\label{ss.Up}

We consider the $U_p$-operator on $\mathbb{V}^\cusp_{\Mant, \mathbb{Q}_p}$ obtained as the composition of 
\[ \mathbb{V}^\cusp_{\Mant}= \mathbb{V}^{\cusp, \mathbb{Z}_p(1)}_{b}  \xrightarrow{ \begin{bmatrix} p & 0 \\ 0 & 1 \end{bmatrix} \cdot } \mathbb{V}^{\cusp, p\mathbb{Z}_p(1)}_{b} \xrightarrow{1_{\mathbb{Z}_p} \cdot } \mathbb{V}^{\cusp}_{\Mant} \]
where $1_{\mathbb{Z}_p}$ denotes the indicator function of $\mathbb{Z}_p$ in $C(\mathbb{Q}_p, \mathbb{Z}_p)$. On $q$-expansions it acts as the usual $U_p$ operator in the theory of $p$-adic modular forms: noting $\mathbb{V}^\cusp_\Mant=(\mathbb{V}_b^\cusp)^{\mathbb{Z}_p(1)}$, the equivariance of $\Kir$ implies it maps $\mathbb{V}^\cusp_\Mant$ into
\[ C^\bdd(\mathbb{A}_f^\times, \Qpbreve)^{\mathbb{Z}_p(1)}=C^\bdd(\mathbb{Z}_p^\times \times p^{\mathbb{Z}_{\geq 0}} \times (\mathbb{A}_f)^\times, \Qpbreve) = C^\bdd(\mathbb{Z}_p\backslash 0 \times (\mathbb{A}_f)^\times, \Qpbreve), \]
and, by an immediate computation, $U_p$ acts in this model by \begin{equation}\label{eq.Up-pullback-by-p}(U_p \cdot f)(x)=f(px).\end{equation} 

\begin{remark}\label{remark.classical-up}
The map $\eval_k$ identifies the double coset operator
\[ U_p=\frac{1}{p} \left( U(\mathbb{Z}_p)\begin{bmatrix} p & 0 \\ 0 & 1 \end{bmatrix}U(\mathbb{Z}_p) \right) = \frac{1}{p}\sum_{i=0}^{p-i} \begin{bmatrix}p & i \\ 0 & 1\end{bmatrix}U(\mathbb{Z}_p) \]
on $S_k^{U(\mbb{Z}_p)}$ with the operator $U_p$ defined above (cf. also \cref{eq.Uell-double-coset}). Indeed, this follows from the equivariance properties of $\eval_k$ by expressing the indicator function $1_{\mathbb{Z}_p}$ as $\frac{1}{p}$ times the sum over all characters of $\frac{1}{p}\mbb{Z}_p/\mbb{Z}_p$.
\end{remark}

\subsection{The ordinary projector}

We write $e:=\lim_n U_p^{n!}$ for Hida's ordinary projector on $\mathbb{V}^\cusp_\Mant$, which exists by an analog of the usual argument, recalled now. 

\begin{lemma}
	$\lim_n U_p^{n!}$ exists as an idempotent operator on $\mathbb{V}^\cusp_\Mant$. 
\end{lemma}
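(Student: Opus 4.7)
The plan is to reduce to a finite tame level mod $p^N$ statement and then invoke classical Hida theory at that level.

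First, by $p$-adic completeness of $\mathbb{V}_\Mant^\cusp$, it suffices to show that for each $f$ and each integer $N \geq 1$, the sequence $\{U_p^{n!} f \bmod p^N\}$ is eventually constant; idempotence $e^2 = e$ will then follow formally since both $\{U_p^{n!}\}$ and $\{U_p^{2 \cdot n!}\} = \{(U_p^{n!})^2\}$ converge along cofinal subsequences to the same limit. Since $\Ig_\Mant = \varprojlim_{K^p} \Ig_\Mant / K^p$ is a cofiltered limit of affine $p$-adic formal schemes, the $\GL_2(\mathbb{A}_f^{(p)})$-smooth vectors $\bigcup_{K^p} \mathbb{V}_\Mant^{\cusp, K^p}$ are dense in $\mathbb{V}_\Mant^\cusp$, and as $U_p$ commutes with the $\GL_2(\mathbb{A}_f^{(p)})$-action, I may assume $f \in \mathbb{V}_{\Mant, \mathbb{Z}_p}^{\cusp, K^p}$ for some fixed compact open $K^p$.

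Second, the task reduces to showing that in the $\mathbb{Z}/p^N$-module $\mathbb{V}_{\Mant, \mathbb{Z}_p}^{\cusp, K^p}/p^N$ the $\mathbb{Z}/p^N[U_p]$-submodule generated by $f$ is finite. Once this is established, $U_p$ restricts to an endomorphism of a finite $\mathbb{Z}/p^N$-module, and the standard finite-monoid observation (in any finite monoid, the powers $U^{n!}$ stabilize to the unique idempotent in the eventual cyclic subgroup generated by $U$) gives convergence of $U_p^{n!} f$. Taking this pointwise produces the operator $e$ on all of $\mathbb{V}_\Mant^\cusp$, and continuity of $U_p$ (which is a contraction, visibly from the $q$-expansion description) shows that the idempotent is bounded.

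The main obstacle is the finiteness claim for the $U_p$-orbit at fixed tame level mod $p^N$; this is the essential content of Hida's classical argument for the existence of the ordinary projector on cuspidal $p$-adic modular forms of bounded tame level. The plan is to import this by using the identification $\mathbb{V}_\Mant^\cusp = C(\mathbb{Z}_p^\times, \mathbb{V}_{\Katz'}^\cusp)$ of \cref{eq.Vmant-as-functions}, noting that $\diag(p,1)$ commutes with the $\mathbb{Z}_p^\times$-torsor action (both being diagonal) so that $U_p$ is built compatibly from its action on the $\mathbb{V}_{\Katz'}^\cusp$-factor. Alternatively, one can argue directly from the Kirillov/$q$-expansion description of \cref{prop.Kir-q-exp-principle}, in which $(U_p f)(x) = f(px)$, combined with Hida's classical density of classical cusp forms of bounded tame level in $\mathbb{V}_{\Katz',\mathbb{Z}_p}^\cusp$ mod $p^N$. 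Either route bootstraps the required finiteness, closing the argument.
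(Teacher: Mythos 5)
Your plan is essentially the same as the paper's proof, differing only in the finishing move. Both arguments reduce, via density of the finite-dimensional classical cusp form spaces $S_k^{K_p K^p}$ (which are $U_p$-stable for suitable $K_p$, by the double-coset interpretation of $U_p$) in $\mathbb{V}^\cusp_\Mant$, to proving stabilization on such finite-level spaces. Your finishing move is the finite-monoid stabilization of $U_p^{n!}$ applied directly to the mod $p^N$ orbit, while the paper works over $\mathbb{C}_p$, decomposes the finite-dimensional classical space into generalized $U_p$-eigenspaces, and checks that each unit eigenvalue $\lambda$ (necessarily algebraic over $\mathbb{Q}_p$, since $U_p$ is already defined on $S_k$) satisfies $\lambda^{n!}\to 1$ by working in the finite group $(\mathcal{O}_{\mathbb{Q}_p(\lambda)}/p^k\mathcal{O}_{\mathbb{Q}_p(\lambda)})^\times$. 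The two are morally the same observation, and your version is arguably cleaner at the level of coefficient rings.

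The one step you should spell out is why the $\mathbb{Z}/p^N[U_p]$-submodule generated by $f$ is finite, since finite dimensionality of the classical space alone does not give this when the coefficient ring (e.g.\ $\mathcal{O}_{\mathbb{C}_p}/p^N$ or $\mathcal{O}_{\mathbb{Q}_p^\cyc}/p^N$) is infinite. The clean deduction is: density gives a classical $g$ with $g\equiv f\pmod{p^N}$ lying in a finite-dimensional $U_p$-stable space $S_k^{K_pK^p}$; on this space $U_p$ preserves a $\mathbb{Z}$-lattice of finite rank (e.g.\ by $q$-expansion integrality of the normalized double-coset operator, which on $q$-expansions is just $a_n\mapsto a_{np}$), so the powers $U_p^n\bmod p^N$ of this integer matrix form a finite monoid, and the orbit of $g$ --- hence of $f$ --- mod $p^N$ is finite. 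Saying this finiteness ``is the essential content of Hida's classical argument'' is a bit loose as phrased (Hida's theorem is the existence of the ordinary projector, not a priori a finiteness of orbits), but with the integral structure made explicit your bootstrap works and recovers the paper's argument.
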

\begin{proof}
By the proof of \cref{lemma.eval-dense}, the finite dimensional spaces of classical cuspidal modular forms $S_{k,\mathbb{C}_p}^{K_pK^p}$ are dense in $\mathbb{V}^\cusp_{\Mant, \mathbb{C}_p}$ as we vary over all $K^p \leq \GL_2(\mathbb{A}_f^{(p)})$ compact open and, for all $r \geq 0$, $K_p\leq \GL_2(\mathbb{Z}_p)$ consisting of those matrices congruent modulo $p^r$ to an element in $U(\mathbb{Z}_p)$. By \cref{remark.classical-up}, the $U_p$ operator preserves each of these spaces. It thus suffices to show the limit exists on each such space. 

On $S_{k,\mathbb{C}_p}^{K_pK^p}$, we claim $e$ acts as projection onto the sum of generalized $U_p$-eigenspaces corresponding to eigenvalues of absolute value $1$. Indeed, since $U_p$ preserves the $\mathcal{O}_{\mathbb{C}_p}$-lattice $S_{k,\mathbb{C}_p}^{K_pK^p} \cap \mathbb{V}^\cusp_{\Mant,\mathcal{O}_{\mathbb{C}_p}}$, its eigenvalues all have absolute value less than or equal to $1$. The limit is thus zero on each non-unit generalized eigenspace. On each unit generalized eigenspace, the limit is the identity because the eigenvalues $\lambda$ of $U_p$ are algebraic over $\mathbb{Q}_p$ (since $U_p$ is defined already on $S_k$ without base change to $\mathbb{C}_p$), so that $\lim_{n\rightarrow \infty} \lambda^{n!}=1$ is verified modulo $p^k$ by working in the finite abelian group $\left(\mathcal{O}_{\mathbb{Q}_p(\lambda)}/p^k\mathcal{O}_{\mathbb{Q}_p(\lambda)}\right)^\times$.
	
\end{proof}

\subsection{Topological coinvariants}
Recall from \cref{def.top-coinv-mupinfty} and \cref{def.top-coinv-tildemupinfty} that we define the topopological co-invariants
\[ (\mathbb{V}_b^{\cusp})_{\tilde{\mu}_{p^\infty}}:=\mathbb{V}_b^\cusp \hat{\otimes}_{C(\mathbb{Q}_p, \mathbb{Z}_p)} \mathbb{Z}_p \textrm{ and } (\mathbb{V}_\Mant^{\cusp})_{\mu_{p^\infty}}:=\mathbb{V}_\Mant^\cusp \hat{\otimes}_{C(\mathbb{Z}_p, \mathbb{Z}_p)} \mathbb{Z}_p \]
where $\mathbb{Z}_p$ is the fiber at $0$ as a $C(\mathbb{Q}_p,\mathbb{Z}_p)$-module, i.e. $f \cdot z = f(0)z$, and the completion is $p$-adic. Equivalently, we have
 \begin{align*} (\mathbb{V}_b^{\cusp})_{\tilde{\mu}_{p^\infty}}&= \lim_{k} \mathbb{V}_{b,\mathbb{Z}/p^k\mathbb{Z}}^\cusp \otimes_{C(\mathbb{Q}_p, \mathbb{Z}/p^k\mathbb{Z})} \mathbb{Z}/p^k\mathbb{Z} \textrm{ and } \\
 (\mathbb{V}_{\Mant}^{\cusp})_{\mu_{p^\infty}}& = \lim_{k} \mathbb{V}_{\Mant,\mathbb{Z}/p^k\mathbb{Z}}^\cusp \otimes_{C(\mathbb{Z}_p, \mathbb{Z}/p^k\mathbb{Z})} \mathbb{Z}/p^k\mathbb{Z}.
\end{align*}

The following result implies \cref{main.hida-coinvariants} of the introduction. 
\begin{theorem}\label{theorem.ordinary-iso-coinvariants}
The following natural maps are isomorphisms: 
\[ e \mathbb{V}_\Mant^{\cusp} \rightarrow  (\mathbb{V}_\Mant^{\cusp})_{\mu_{p^\infty}} \rightarrow (\mathbb{V}_b^\cusp)_{\tilde{\mu}_{p^\infty}} \] 
\end{theorem}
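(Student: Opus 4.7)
The second map is immediate from \cref{lemma.topological-coinvariants-agree} applied to $M = \mathbb{V}_b^{\cusp}$: the identifications $M^{\mathbb{Z}_p(1)} = \mathbb{V}_\Mant^{\cusp}$ (from \cref{eq.functions-as-invariants}) together with the induced $\mu_{p^\infty}$-structure (\cref{ss.action-of-cont-functions}) identify $(M^{\mathbb{Z}_p(1)})_{\mu_{p^\infty}} = (\mathbb{V}_\Mant^{\cusp})_{\mu_{p^\infty}}$ compatibly with the map to $M_{\tilde{\mu}_{p^\infty}} = (\mathbb{V}_b^{\cusp})_{\tilde{\mu}_{p^\infty}}$. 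So the content of the theorem lies in showing that the first map $e\mathbb{V}_\Mant^{\cusp} \to (\mathbb{V}_\Mant^{\cusp})_{\mu_{p^\infty}}$ is an isomorphism.

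The plan for the first map is to work inside the Kirillov embedding $\mathbb{V}_\Mant^{\cusp} \hookrightarrow C^\bdd((\mathbb{Z}_p \setminus 0) \times (\mathbb{A}_f^{(p)})^{\times}, \Zpbreve)$ provided by \cref{prop.Kir-q-exp-principle}. Using the decomposition $\mathbb{Z}_p \setminus 0 = \bigsqcup_{n \geq 0} p^n \mathbb{Z}_p^\times$, I will write each $f \in \mathbb{V}_\Mant^{\cusp}$ as a uniformly bounded sequence $(f_n)_{n \geq 0}$ of functions on $\mathbb{Z}_p^\times \times (\mathbb{A}_f^{(p)})^\times$. Under this presentation \cref{eq.Up-pullback-by-p} identifies $U_p$ with the left shift $(U_p f)_n = f_{n+1}$, while multiplication by $g \in C(\mathbb{Z}_p, \Zpbreve)$ acts by $(g f)_n = g|_{p^n \mathbb{Z}_p^\times} \cdot f_n$.

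The crux is to show that both $\ker e$ and $\ker(\mathbb{V}_\Mant^{\cusp} \to (\mathbb{V}_\Mant^{\cusp})_{\mu_{p^\infty}})$ coincide with the subspace $N := \{f : \|f_n\|_\infty \to 0\}$. Since $ef = \lim_k U_p^{k!} f$ and $\|U_p^k f\|_\infty = \sup_{m \geq k}\|f_m\|_\infty$ by the shift formula, one has $ef = 0$ if and only if $\|f_n\|_\infty \to 0$, giving $\ker e = N$. For the coinvariant side, unwinding \cref{eq.limit-pres-mupinfty-coinvariants-v2} in the $p$-adic topology identifies its kernel with the $p$-adic closure of $\mathfrak{m}_0 \cdot \mathbb{V}_\Mant^{\cusp}$, where $\mathfrak{m}_0$ is the maximal ideal at $0 \in \mathbb{Z}_p$ in $C(\mathbb{Z}_p, \Zpbreve)$; this closure sits inside $N$ since $g(0) = 0$ continuous implies $\sup_{p^n \mathbb{Z}_p^\times}|g| \to 0$, and is dense in $N$ via the explicit approximations $f \approx 1_{\mathbb{Z}_p \setminus p^k\mathbb{Z}_p} \cdot f$ whose error satisfies $\|f - 1_{\mathbb{Z}_p \setminus p^k\mathbb{Z}_p} \cdot f\|_\infty = \sup_{n \geq k}\|f_n\|_\infty \to 0$.

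With both kernels identified as $N$, the idempotent splitting $\mathbb{V}_\Mant^{\cusp} = e\mathbb{V}_\Mant^{\cusp} \oplus \ker e$ together with the $p$-adic completeness of $e\mathbb{V}_\Mant^{\cusp}$ as a direct summand of a complete module yields the desired isomorphism, and $T(\mathbb{Q}_p) \times \GL_2(\mathbb{A}_f^{(p)})$-equivariance is inherited from \cref{lemma.kir-equivariant}. The only truly substantive step is the equality $\ker e = N$ — it hinges on knowing that $U_p$ acts as a genuine shift with no twist under the Kirillov map, which is delivered by \cref{eq.Up-pullback-by-p}, and on the interaction of the convergence defining $e$ with the sup norm. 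Everything else is bookkeeping with the integral $p$-adic Fourier theory of \cref{s.rep-theory}.
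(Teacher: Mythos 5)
Your argument is correct and follows essentially the same route as the paper's: reduce the second map to \cref{lemma.topological-coinvariants-agree}, then work inside the Kirillov embedding using \cref{eq.Up-pullback-by-p} to interpret $U_p$ as the shift, and identify the kernel of $e$ with the kernel of the coinvariants map. The only cosmetic difference is that the paper reduces everything mod $p^k$ — where "vanishing at $0$" becomes the cleaner "$f_n = 0$ for $n \gg 0$" and the limit over $k$ at the end is implicit — whereas you work integrally with the decay-to-zero condition $\|f_n\|_\infty \to 0$, which forces you to spell out the density and closedness of $N$ and the final passage from $M = eM \oplus N$ to $\lim_k eM/p^k eM = eM$; the content is the same.
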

\begin{proof}
The second map is an isomorphism by \cref{lemma.topological-coinvariants-agree}. Thus it remains to treat the map $e \mathbb{V}_\Mant^{\cusp} \rightarrow  (\mathbb{V}_\Mant^{\cusp})_{\mu_{p^\infty}}$. Working mod $p^k$, and using the description of $U_p$ given in \cref{eq.Up-pullback-by-p}, it is clear that $v \in \mathbb{V}_{\Mant,\mathbb{Z}_p/p^k\mathbb{Z}_p}^\cusp$ is in the kernel of $e$ if and only if $\Kir(v)$, viewed as an element of 
\[ C(\mathbb{Z}_p\backslash\{0\} \times \mathbb{A}_f^\times, \mathbb{Z}/p^k\mathbb{Z}), \]
 vanishes on $p^n\mathbb{Z}_p\backslash\{0\} \times \mathbb{A}_f^\times$ for $n \gg 0$. This is equivalent to $v$ being in the
  the image of multiplication by the indicator function $1_{\mathbb{Z}_p \backslash p^n \mathbb{Z}_p}$ for $n \gg 0$. Since any element in the kernel of evaluation at $0$ in $C(\mathbb{Z}_p, \mathbb{Z}_p/p^k\mathbb{Z}_p)$ is a multiple of such an indicator function, we conclude the kernel of $e$ is equal to the kernel of passing to the fiber at $0$ as in the definition of the coinvariants. 
 \end{proof}

\subsection{Admissibility}

Combining the following result, deduced from Hida's finiteness theorem for the ordinary Hecke algebra, with \cref{theorem.ordinary-iso-coinvariants} gives \cref{maincor.admissibility}. 
\begin{proposition}\label{prop.admisibillity}
    For $K^p \leq \GL_2(\mathbb{A}_f^{(p)})$ compact open, the space $e\mathbb{V}_{\Mant,\mathbb{Q}_p}^{\cusp, K^p}$ is an admissible Banach representation of $T(\mathbb{Z}_p)$.
\end{proposition}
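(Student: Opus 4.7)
The plan is to identify $e\mathbb{V}_{\Mant, \mathbb{Q}_p}^{\cusp, K^p}$ with $C(\mathbb{Z}_p^\times, e\mathbb{V}_{\Katz', \mathbb{Q}_p}^{\cusp, K^p})$ and to conclude admissibility by combining Hida's classical finiteness theorem with a direct bookkeeping of invariants relative to the reparametrization $(a_1, a_2) \mapsto (a_1 a_2, a_2)$ of $T(\mathbb{Z}_p) = \mathbb{Z}_p^\times \times \mathbb{Z}_p^\times$.

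The starting point is the identification $\mathbb{V}_\Mant^\cusp = C(\mathbb{Z}_p^\times, \mathbb{V}_{\Katz'}^\cusp)$ of \cref{eq.Vmant-as-functions}, equipped with the $T(\mathbb{Z}_p)$-action $((a_1,a_2)f)(x) = a_2 \cdot f((a_1a_2)^{-1}x)$ of \cref{lemma.vmant-as-functions-equivariance}. The $|\det|$-twist in the $K^p$-action of \cref{lemma.vmant-as-functions-equivariance} vanishes for $K^p$ compact: any compact subgroup of $\mathbb{A}_f^{(p),\times}$ is contained in $\hat{\mathbb{Z}}^{(p),\times}$, so $|\det g| = 1$ for every $g \in K^p$. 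Hence $K^p$ acts purely on the coefficient factor and $\mathbb{V}_\Mant^{\cusp, K^p} = C(\mathbb{Z}_p^\times, \mathbb{V}_{\Katz'}^{\cusp, K^p})$.

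The key step is that $e$ acts fiberwise on this decomposition, restricting to the classical $U_p$-ordinary projector on each fiber. This is transparent from the Kirillov embedding of \cref{prop.Kir-q-exp-principle}: restricting to $\mathbb{V}_\Mant^\cusp$ and identifying $\mathbb{Z}_p \setminus 0 = \mathbb{Z}_p^\times \times p^{\mathbb{N}}$ by unit part and $p$-adic valuation, the $\mathbb{Z}_p^\times$-factor in the Kirillov model matches the argument $\mathbb{Z}_p^\times$ of $C(\mathbb{Z}_p^\times, \mathbb{V}_{\Katz'}^\cusp)$, while the formula $(U_p f)(x_p, x^{(p)}) = f(p x_p, x^{(p)})$ from \cref{eq.Up-pullback-by-p} only shifts the valuation direction $p^{\mathbb{N}}$ and acts trivially on the unit direction. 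Therefore $e$ acts fiberwise, and
\[ e\mathbb{V}_{\Mant}^{\cusp, K^p} = C(\mathbb{Z}_p^\times, e\mathbb{V}_{\Katz'}^{\cusp, K^p}). \]

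To conclude, apply the automorphism $(a_1, a_2) \mapsto (b, c) = (a_1 a_2, a_2)$ of $T(\mathbb{Z}_p)$; in these coordinates $b$ translates only the argument and $c$ acts only on $\mathbb{V}_{\Katz'}$. For any open subgroup $H_b \times H_c \le T(\mathbb{Z}_p)$ (such subgroups are cofinal), we then have
\[ \bigl(C(\mathbb{Z}_p^\times, e\mathbb{V}_{\Katz', \mathbb{Z}_p}^{\cusp, K^p})/p^n\bigr)^{H_b \times H_c} = C\bigl(\mathbb{Z}_p^\times/H_b,\, (e\mathbb{V}_{\Katz', \mathbb{Z}_p}^{\cusp, K^p}/p^n)^{H_c}\bigr), \]
which is finite over $\mathbb{Z}/p^n$: the index $[\mathbb{Z}_p^\times : H_b]$ is finite, and by Hida's finiteness theorem for the ordinary Hecke algebra (applied to $e\mathbb{V}_\Katz^{\cusp, K^p}$ and transferred to $e\mathbb{V}_{\Katz'}^{\cusp, K^p}$ via the Weil pairing of \cref{ss.vmant-from-vkatz}), $(e\mathbb{V}_{\Katz', \mathbb{Z}_p}^{\cusp, K^p}/p^n)^{H_c}$ is finite over $\mathbb{Z}/p^n$. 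This yields the admissibility of $e\mathbb{V}_{\Mant, \mathbb{Q}_p}^{\cusp, K^p}$ as a $T(\mathbb{Z}_p)$-Banach representation. The main obstacle is verifying that $e$ acts fiberwise in the decomposition $C(\mathbb{Z}_p^\times, \mathbb{V}_{\Katz'}^\cusp)$, for which the Kirillov description makes the formal structure essentially automatic; the rest is bookkeeping combined with Hida's classical theorem.
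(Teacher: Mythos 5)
Your proof proposal follows the same overall architecture as the paper's — identify $e\mathbb{V}_\Mant^{\cusp,K^p}$ with $C(\mathbb{Z}_p^\times, e\mathbb{V}_{\Katz'}^{\cusp,K^p})$, then conclude from Hida's finiteness — and your observation that $|\!\det\!|$ is trivial on any \emph{compact} $K^p$ (so the twist in \cref{lemma.vmant-as-functions-equivariance} disappears) is a genuinely clean simplification. However, the central step, that the ordinary projector $e$ acts \emph{fiberwise} on $C(\mathbb{Z}_p^\times, \mathbb{V}_{\Katz'}^\cusp)$, is handled differently in the paper, and your version of it has a gap.

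The paper does not argue that $U_p = 1_{\mathbb{Z}_p}\cdot\diag(p,1)$ itself is fiberwise. Instead it replaces $U_p$ by $U_p' = 1_{\mathbb{Z}_p}\cdot\diag(p,p^{-1})$ (asserting these yield the same ordinary projector) and observes that $U_p'$ is manifestly induced from an operator on $\mathbb{V}_{\Katz'}^\cusp$: the element $\diag(p,p^{-1})$ acts compatibly with the Weil pairing, hence preserves the Weil-pairing section $\Ig_{\Katz'}\hookrightarrow\Ig_\Mant$ that underlies \cref{eq.Vmant-as-functions}, so its composite with $1_{\mathbb{Z}_p}$ descends to $\mathbb{V}_{\Katz'}^\cusp$ and acts constantly in the $C(\mathbb{Z}_p^\times,-)$ argument. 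Your route instead asserts that under $\Kir$ ``the $\mathbb{Z}_p^\times$-factor in the Kirillov model matches the argument $\mathbb{Z}_p^\times$ of $C(\mathbb{Z}_p^\times, \mathbb{V}_{\Katz'}^\cusp)$.'' This identification is exactly the nontrivial point: the torsor coordinate of $C(\mathbb{Z}_p^\times,\mathbb{V}_{\Katz'}^\cusp)$ is measured relative to the Weil-pairing section, while the ``unit part'' of the Kirillov argument is measured by translating the canonical Tate point by $\diag(x,1)$. To equate them you must verify that $\diag(p^n,1)$ applied to the canonical Tate point, after projecting to $\Ig_\Mant$, still lies \emph{on} the Weil section for all $n\geq 0$ --- equivalently, that the paper's twisting $v\mapsto v\otimes\kappa$ (which multiplies $\Kir(v)(x)$ by $\kappa$ of the $\mathbb{Z}_p^\times$-component of the translated Tate point) satisfies $\Kir(v\otimes\kappa)(x)=\kappa(\mathrm{unit}(x_p))\Kir(v)(x)$. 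That computation with Tate-curve canonical trivializations and $p$-power isogenies is not done in your proposal; it is precisely what $U_p'$ is designed to avoid. You should either supply that verification or adopt the paper's $U_p'$ substitution.

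A smaller point: Hida's finiteness theorem \cite{Hida.OnPadicHeckeAlgebrasForGL2} is stated at tame level $K_1^{(p)}(N)$, so even with your $|\!\det\!|$-observation you still need the paper's conjugation argument (or an equivalent) to deduce finiteness of $(e\mathbb{V}_{\Katz',\mathbb{Z}_p}^{\cusp,K^p}/p^n)^{H_c}$ for an arbitrary compact open $K^p$, rather than invoking it directly at level $K^p$.
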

\begin{proof}
We first assume $K^p=K_1^{(p)}(N)$ for $N$ coprime to $p$.  We note that, in the definition of the ordinary part, we can replace $U_p$ with the operator $U_p'$ obtained as the composition of 
\[ \mathbb{V}^\cusp_{\Mant}= \mathbb{V}^{\cusp, \mathbb{Z}_p(1)}_{b}  \xrightarrow{ \begin{bmatrix} p & 0 \\ 0 & p^{-1} \end{bmatrix} \cdot } \mathbb{V}^{\cusp, p^2\mathbb{Z}_p(1)}_{b} \xrightarrow{1_{\mathbb{Z}_p} \cdot } \mathbb{V}^{\cusp}_{\Mant}. \]
The operator $U_p'$ is induced by an operator on $\mathbb{V}_{\Katz'}^\cusp$ under the identification of \cref{eq.Vmant-as-functions}. Using this construction of the operator $e$ and applying also \cref{lemma.vmant-as-functions-equivariance} we find that, in \cref{eq.Vmant-as-functions}, we have an identification 
\[ e\mathbb{V}^{\cusp,K^p}_{\Mant}=C(\mathbb{Z}_p^\times, e \mathbb{V}_{\Katz'}^{\cusp,K^p}) \]
of $T(\mathbb{Z}_p)$-representations, i.e. $e \mathbb{V}_\Mant^\cusp$ is induced from the $\mathbb{Z}_p^\times$ representation $e \mathbb{V}_{\Katz'}$, when $\mathbb{Z}_p^\times$ is included into $T(\mathbb{Z}_p)$ by $a \mapsto (a,a^{-1})$. It thus suffices to show $e\mathbb{V}_{\Katz'}^\cusp$ is an admissible representation of $\mathbb{Z}_p^\times$. By the duality between ordinary cuspidal $p$-adic modular forms and ordinary Hecke algebras (\cite[Equation (2.3) on p.437] {Hida.OnPadicHeckeAlgebrasForGL2}; see also \cite[\S3.3]{Hida.pAdicAutomorphicFormsOnShimuraVarieties}), it suffices to show the classical ordinary Hecke algebra with $p$ inverted is a finite rank $\mathbb{Z}_p[[\mathbb{Z}_p^\times]][1/p]$-module. This holds already without inverting $p$  (\cite[Theorem 2.2]{Hida.OnPadicHeckeAlgebrasForGL2}; see also \cite[\S3.3]{Hida.pAdicAutomorphicFormsOnShimuraVarieties}). 

To pass the statement from level $K_1^{(p)}(N)$ to level $K^{(p)}(N)$, we argue as in the end of the proof of \cref{lemma.eval-dense}: passing to level $K_2^{(p)}(N)$ gives, at the level of $T(\mathbb{Z}_p)$-representations, just a direct sum of $\phi(N)$ copies corresponding to the connected components. We then conjugate $K_2^{(p)}(N^2)$ into $K_1(N)$ for $N\gg0$ such that $K_1^{(p)}(N) \leq K^p$ to conclude (since the $K^p$ invariants are then a closed subrepresentation of the $K_1^{(p)}(N)$-invariants, which we have just argued are admissible). 
\end{proof}

\begin{remark}
  The incarnation of Hida-ordinary $p$-adic modular forms as $\tilde{\mu}_{p^\infty}$-coinvariants is closely related, via \cref{main.completion-kirillov-model}, to the theory of the canonical lift for the Jacquet module of $S_k$, which is smooth admissible because $S_k$ is smooth admissible. Because, for $k \geq 2$, \cref{main.completion-kirillov-model} implies that the induced map $J(S_{k, \mathbb{C}_p}) \rightarrow (\mathbb{V}_{b,\mathbb{C}_p}^\cusp)_{\tilde{\mu}_{p^\infty}}$ has dense image, it seems plausible that one may be able to deduce, for $K^p \leq \GL_2(\mathbb{A}_f^{(p)})$ compact open,  the admissibility of $(\mathbb{V}_{b,\mathbb{C}_p}^\cusp)^{K^p}_{\tilde{\mu}_{p^\infty}}$ as a Banach $T(\mathbb{Q}_p)$-representation directly from the admissibility of $J(S_{k,\mathbb{C}_p}^{K^p})$ as a smooth $T(\mathbb{Q}_p)$-representation. This would give a very direct deduction of Hida's finiteness theorem for the ordinary Hecke algebra from admissibility of the Jacquet module of a smooth admissible representation.  It would also be interesting to know if there is an  argument for Hida's classicality theorem from this perspective. 
\end{remark}

\section{Local-global compatibility for the $\GL_2$ ordinary Igusa variety}\label{s.local-global-proof}

In this section we prove \cref{main.lg}, describing the local representation in $\mathbb{V}_{b,\mathbb{C}_p}^\cusp$ associated to a global modular representation $\pi$. In \cref{ss.Galois-reps-ordinary-forms} we first recall some results on ordinary $p$-adic modular forms and their associated Galois representations. In \cref{ss.embedding-and-first-lower-bound} we construct a Kirillov embedding of this local representation and establish a first lower bound on its image using the completion of the smooth Kirillov model. In \cref{ss.upper-bound}, we establish the upper bound on the size of the representation using the admissibility of \cref{maincor.admissibility}. In \cref{ss.lower-bound-companion} we complete the lower bound, with the key input being the the theory of overconvergent companions in \cite{BreuilEmerton.RepresentationPAdiquesOrdinarsDeGl2Qp}, which we use to find the vector that is missing from the completion of the smooth Kirillov model when $\rho_p$ is reducible and split.  

\subsection{Galois representations attached to ordinary $p$-adic modular forms}\label{ss.Galois-reps-ordinary-forms}
We recall that for any prime-to-$Np$ Hecke eigenform in $f \in \mathbb{V}_{\Katz',\mathbb{C}_p}^{\cusp, K_1^{(p)}(N)}$, there is an associated semisimple Galois representation $\rho: \Gal(\overline{\mathbb{Q}}/\mathbb{Q}) \rightarrow \GL_2(\mathbb{C}_p)$, interpolating the construction for classical forms recalled in \cref{ss.galois-classical-modular-forms}. The following result of Kisin generalizes \cref{lemma.classical-ordinary}:

\begin{proposition}\label{prop.ordinary-p-adicmf-unramified-sub}
    Suppose $f \in \mathbb{V}_{\Katz',\mathbb{C}_p}^{\cusp, K_1^{(p)}(N)}$ is a prime-to-$Np$ Hecke eigenform and a $U_p$-eigenvector of eigenvalue $\alpha$ such that $|\alpha|=1$. Then, for $\rho$ the associated Galois representation and $\rho_p$ its restriction to a decomposition group, $\Gal(\overline{\mathbb{Q}}_p/\mathbb{Q}_p)$ admits an unramified subrepresentation $\rho_p$ on which geometric Frobenius acts by $\alpha$. 
\end{proposition}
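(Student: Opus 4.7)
The plan is to reduce the statement to the classical case (\cref{lemma.classical-ordinary}) by interpolation in a Hida family; this is essentially the approach of Kisin in \cite[6.13]{Kisin.OverconvergentModularFormsAndTheFontaineMazurConjecture}.

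First, I would use \cref{prop.admisibillity} together with Hida's duality between ordinary $p$-adic modular forms and Hecke algebras to see that the cuspidal ordinary Hecke algebra $\mathbf{T}^{\ord,N}$ acting on $e\mathbb{V}_{\Katz',\mathbb{Z}_p}^{\cusp,K_1^{(p)}(N)}$ is finite and flat over the Iwasawa algebra $\Lambda = \mathbb{Z}_p[[\mathbb{Z}_p^\times]]$, and that the classical arithmetic points of weight $k \geq 2$ are Zariski dense in $\Spec \mathbf{T}^{\ord,N}$. The Hecke eigensystem of $f$ corresponds to a $\mathbb{C}_p$-point $\lambda$ of $\Spec \mathbf{T}^{\ord,N}$ with $\lambda(U_p) = \alpha \in \mathcal{O}_{\mathbb{C}_p}^\times$. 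After normalizing an irreducible component of $\mathbf{T}^{\ord,N}$ through which $\lambda$ factors, Hida's construction produces a continuous Galois representation $\tilde\rho: G_\mathbb{Q} \to \GL_2(\mathbf{T}'[1/p])$ whose trace of geometric Frobenius at $\ell \nmid Np$ equals the image of $T_\ell$ and whose specialization at each classical arithmetic point $\lambda_k$ recovers the Galois representation of the corresponding classical weight-$k$ ordinary eigenform.

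The main step is to interpolate the unramified subrepresentations supplied by \cref{lemma.classical-ordinary} at each classical $\lambda_k$ to a single unramified subrepresentation of $\tilde\rho|_{G_{\mathbb{Q}_p}}$ on which geometric Frobenius acts by the image of $U_p$. Since $U_p$ acts as a unit throughout, the candidate sub-line can be constructed as the unit part in the decomposition of $\tilde\rho|_{G_{\mathbb{Q}_p}}$ under a Frobenius lift; this splitting is compatible with all specializations because the unit and non-unit parts are separated by a continuous idempotent after inverting $p$. Specializing at $\lambda$ then yields the claimed sub-line of $\rho_p$ on which geometric Frobenius acts by $\alpha$. The main technical obstacle is verifying that this sub-line is actually \emph{unramified} and not merely Frobenius-stable: this is where one uses continuity of $\tilde\rho$ together with the Zariski density of classical specializations — the inertia subgroup acts trivially on the sub-line at a dense set of points, and by continuity (working in the generic fibre of $\mathbf{T}'[1/p]$) it acts trivially everywhere. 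An alternative, closer to Kisin's actual proof, bypasses the interpolation entirely by working with the $(\varphi,\Gamma)$-module attached to $\rho_p$ directly and extracting the unramified sub from the triangulation forced by the unit $U_p$-eigenvalue.
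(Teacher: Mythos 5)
The paper's own proof of this proposition is a one-line citation: it invokes Kisin's argument in \S6.13 of \cite{Kisin.OverconvergentModularFormsAndTheFontaineMazurConjecture} and simply notes the dual normalization. Your proposal instead unfolds an argument, and while the overall strategy you sketch (Hida family, density of classical points, Wiles's classical result) is a legitimate route — indeed it is essentially Wiles's original method in \cite{Wiles.OnOrdinaryLambdaAdicRepresentationsAssociatedToModularForms}, applied fiberwise — a few points deserve pushback.

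First, the attribution is off. You open by claiming this is "essentially the approach of Kisin in [6.13]," but Kisin's mechanism is quite different: he specializes a general theorem about $B_{\cris}$-periods for finite-slope overconvergent forms (proven via Sen theory and analytic continuation of crystalline periods along the eigencurve), and in the ordinary case the unit eigenvalue forces the crystalline period to cut out an unramified subrepresentation. Your closing sentence compounds this: Kisin's 2003 paper does not use $(\varphi,\Gamma)$-modules or triangulations in the modern (Colmez/Bellaïche--Chenevier/KPX) sense; describing his argument that way is anachronistic. If you want to go through interpolation in a Hida family, the right reference to cite and emulate is Wiles, not Kisin.

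Second, the crucial step — that the unit-eigenvalue $\phi$-line is actually stable under all of $G_{\mathbb{Q}_p}$ with trivial inertia action — is under-argued. You obtain a $\phi$-stable decomposition by Hensel's lemma (valid because one root of the characteristic polynomial of Frobenius is a unit and the other is not), but that decomposition depends on the choice of Frobenius lift and is a priori only $\phi$-stable, not inertia-stable; the object you really need has no canonical "unit part" before you know ordinarity. Your density argument to fix this is in principle sound — "the matrix entry recording the non-invariance of the candidate line under a given $g \in I_{\mathbb{Q}_p}$ vanishes at a Zariski-dense set of classical points, hence identically" — but it needs to be executed with care: one must work on the normalization of a reduced irreducible component, check that the Hensel idempotent actually specializes to the classical unramified line at each arithmetic point (so that \cref{lemma.classical-ordinary} applies to the right object), and then run the vanishing argument against a fixed topological generating set of inertia. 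None of this is fatal, but as written the step is closer to a heuristic than a proof, which is worth noting since the difference between a $\phi$-stable line and a $G_{\mathbb{Q}_p}$-stable unramified line is precisely the content of the proposition.

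What the two approaches buy is genuinely different: the paper's citation to Kisin gives a result valid uniformly for all finite slopes, from which the ordinary case falls out as a corollary, while the Wiles-style interpolation you sketch is more elementary and self-contained but is tied to the ordinary locus and to the existence of a Hida family through $f$.
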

\begin{proof}
This follows from the argument of \cite[\S6.13]{Kisin.OverconvergentModularFormsAndTheFontaineMazurConjecture}, noting (as in the proof of  \cref{lemma.classical-ordinary}) that the Galois representation used there is the dual of the Galois representation used here. 
\end{proof}

\subsubsection{Twists}\label{sss.twists-p-adic-mf}
We also note that there is a connected components map $\pi_0(\Ig^b)=\pi_0(\Ig_\Mant)=\mathbb{Z}_p^\times \times (\hat{\mathbb{Z}}^{(p)}(1))^\times$. In particular, for $\kappa$ any continuous character of $\mathbb{Z}_p^\times$, we can view $\kappa$ as a function on $\Ig_\Mant$ or $\Ig_b$ (cf. \cref{eq.Vmant-as-functions}). For $f \in \mathbb{V}_b$, we write $f \otimes \kappa$ for the product of $\kappa$ and $f$; this preserves $\mathbb{V}_{\Mant}$ (but not $\mathbb{V}_{\Katz'}$ or $\mathbb{V}_{\Katz}$!).  When $\kappa$ is a finite order character this is compatible with the classical twisting described in \cref{sss.mf-twists}: For $f \in S_k$, we have
\[ \eval_k(f \otimes \kappa) = \eval_k(f) \otimes \kappa. \]
One can also define twists by characters of $\hat{\mathbb{Z}}^{(p)}(1)$ similarly to \cref{sss.mf-twists}, but we need only the $p$-adic characters for our arguments below.

\subsection{The embedding and completion of the smooth Kirillov model}\label{ss.embedding-and-first-lower-bound}
Let $\pi=\pi_p \otimes \pi^{(p)}$ be an irreducible $\GL_2(\mathbb{A}_f)=\GL_2(\mathbb{Q}_p) \times \GL_2(\mathbb{A}_f^{(p)})$ subrepresentation appearing in $S_{k, \mathbb{C}_p}$, $k \geq 2$. We write $W_{\pi}:=  \Hom_{\GL_2(\mathbb{A}_f^{(p)})}( \pi^{(p)}, \mathbb{V}_{b,\mathbb{C}_p}^\cusp).$ 

\begin{lemma}\label{lemma.central-character}
    The action of the diagonal $\mathbb{Q}_p^\times$ in $T(\mathbb{Q}_p)$ on $W_\pi$ is by the central character of $\pi_p$ times $z \mapsto z^{-k}$. 
\end{lemma}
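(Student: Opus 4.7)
The argument reduces to the $B(\mathbb{Q}_p)$-equivariance of $\eval_k$ established in \cref{ss.evaluation}. Recall that $B(\mathbb{Q}_p)$ embeds into $\tilde{G}_b$ by the identity on the diagonal, and $\eval_k$ intertwines the two $B(\mathbb{Q}_p)$-actions provided its source is twisted by the character $\mathrm{diag}(z_1, z_2) \mapsto z_2^{-k}$. Specializing to the central scalar $z \cdot I = \mathrm{diag}(z,z) \in Z(\GL_2(\mathbb{Q}_p))$, which acts on the irreducible smooth $\pi_p$ by its central character $\omega_{\pi_p}$ via Schur's lemma, this equivariance immediately yields, for every $f \in \pi$,
\[
\mathrm{diag}(z,z) \cdot \eval_k(f) \;=\; z^{-k}\, \eval_k(z \cdot I \cdot f) \;=\; z^{-k}\,\omega_{\pi_p}(z)\, \eval_k(f).
\]
Hence, on the image of $\pi_p$ inside $W_\pi$ supplied by $\eval_k$ (i.e.\ the subspace of $W_\pi$ obtained by viewing $\eval_k(\pi) = \pi_p^{\mathrm{im}} \otimes \pi^{(p)}$ as a copy of $\pi^{(p)}$-isotypic vectors in $\mathbb{V}_{b,\mathbb{C}_p}^\cusp$), the diagonal acts by the asserted character.

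To promote this to the full $W_\pi$, my plan is to use that $\mathrm{diag}(\mathbb{Q}_p^\times)$ is central in $\tilde{G}_b$---this is immediate from the matrix presentation $\tilde{G}_b = \bigl[\begin{smallmatrix}\mathbb{Q}_p^\times & \tilde{\mu}_{p^\infty} \\ 0 & \mathbb{Q}_p^\times\end{smallmatrix}\bigr]$ since scalar diagonals commute with the unipotent radical $\tilde{\mu}_{p^\infty}$---and that it commutes tautologically with $\GL_2(\mathbb{A}_f^{(p)})$. Consequently the action of each $\mathrm{diag}(z,z)$ on $W_\pi$ commutes with the $T(\mathbb{Q}_p) \ltimes C^\bdd(\mathbb{Q}_p, \mathbb{C}_p)$-action, so the locus $W_\pi^{z \mapsto z^{-k}\omega_{\pi_p}(z)}$ where $\mathrm{diag}(z,z)$ acts by the claimed scalar for all $z$ is a closed $T(\mathbb{Q}_p) \ltimes C^\bdd$-stable subspace of $W_\pi$ containing the image of $\pi_p$. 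By \cref{theorem.completion-kirillov-model} (the density portion) it therefore contains the completed Kirillov model $(\pi_p^\Kir)^\wedge$.

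The main obstacle will be verifying that this eigenspace exhausts $W_\pi$ in the split reducible case for $\rho_p$, where there is a companion vector outside $(\pi_p^\Kir)^\wedge$ to which the density argument does not directly reach. I expect to handle this by inspecting the construction of \cref{ss.lower-bound-companion}: the Breuil--Emerton companion is obtained from ordinary $p$-adic modular forms specialized at weight $k$ with the prescribed central character, and so the action of the central diagonal on it is forced to match $z \mapsto z^{-k}\omega_{\pi_p}(z)$ by direct computation. The rest of the proof is a one-line application of the twisted $B(\mathbb{Q}_p)$-equivariance and requires no further input.
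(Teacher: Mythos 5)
Your first half is sound: the twisted $B(\mathbb{Q}_p)$-equivariance of $\eval_k$ combined with Schur's lemma does establish the claimed scalar action on $\eval_k(\pi_p) \subseteq W_\pi$, and your observation that $\diag(\mathbb{Q}_p^\times)$ is central in $\tilde{G}_b$ (so the relevant eigenspace is closed and $T(\mathbb{Q}_p) \ltimes C^\bdd$-stable) is correct. This pushes the conclusion to the closure $(\pi_p^\Kir)^\wedge$.

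The gap you flag is, however, fatal to the strategy as proposed. The passage from $(\pi_p^\Kir)^\wedge$ to $W_\pi$ is \emph{not} a density statement: $(\pi_p^\Kir)^\wedge$ is the closure of $\pi_p^\Kir$ inside $C^\bdd(\mathbb{Q}_p^\times,\mathbb{C}_p)$, and the chain $(\pi_p^\Kir)^\wedge \subseteq W_\pi \subseteq C^\bdd(\mathbb{Q}_p^\times,\mathbb{C}_p)$ in \cref{eq.local-global-inclusions} leaves open a genuine gap which is precisely the subject of \cref{main.lg}. Your plan to close the gap by ``inspecting the construction of \cref{ss.lower-bound-companion}'' is circular: that section is part of the proof of \cref{main.lg}, and the upper-bound step in \cref{ss.upper-bound} begins by invoking \cref{lemma.central-character} to conclude that $W_{\pi,\tilde\mu_{p^\infty}}$ is an admissible $\mathbb{Q}_p^\times$-Banach module (this is how the central diagonal is stripped off before applying \cref{maincor.admissibility}). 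Without the lemma you cannot yet say $W_\pi$ is $(\pi_p^\Kir)^\wedge$ plus a single companion line, nor rule out extra generalized eigenvectors on which the diagonal might act differently. Even computing the action on the one explicit Breuil--Emerton companion would not exhaust $W_\pi$ without first proving the theorem that depends on the lemma.

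The paper's proof takes a different and non-circular route that works directly on $\mathbb{V}_b^\cusp$ and uses the global adelic structure rather than the Kirillov model: by continuity it suffices to treat $a \in \mathbb{Q}^\times \subseteq \mathbb{Q}_p^\times$, and the multiplication-by-$a$ isogeny on elliptic curves converts the action of $a$ at $p$ into the action of $a^{-1}$ in the central $(\mathbb{A}_f^{(p)})^\times$, which acts on $\Hom_{\GL_2(\mathbb{A}_f^{(p)})}(\pi^{(p)},-)$ through the inverse of the central character of $\pi^{(p)}$; the relation between the central characters of $\pi_p$, $\pi^{(p)}$, and $\pi_\infty$ (with $\pi_\infty$ contributing $r \mapsto r^{-k}$) finishes. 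That argument sees all of $W_\pi$ at once, with no density input. To repair your approach you would need an independent reason that $\diag(\mathbb{Q}_p^\times)$ acts by a character on all of $W_\pi$; the global isogeny argument is exactly that reason, so in practice you would be led back to the paper's proof.
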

\begin{proof}
Since the action is continuous, it suffices to establish the identity for $a \in \mathbb{Q}^\times \subseteq \mathbb{Q}_p^\times$. By considering the multiplication by $a$ isogeny, we find this is equivalent to the action of $a^{-1}$ viewed as an element of the central $(\mathbb{A}_f^{(p)})^\times$, which is, by construction, through the central character of $\pi^{(p)}$. But the central character of $\pi^{(p)}$ on $\mathbb{Q}^\times$ is the inverse of the product of the central character of $\pi_p$ and the central character of the associated real discrete series $\pi_\infty$ (since $\pi_\infty \otimes \pi$, by construction as an automorphic representation, is trivial on the central $\mathbb{Q}^\times \leq \mathbb{A}_f^\times$). With our normalizations, the action on $\pi_\infty$ is by $r \mapsto r^{-k}$, thus we conclude. 
\end{proof}

\begin{remark}
    Note that the twist by $z^{-k}$ in the central action  matches the twist for $\eval_k$ appearing in \cref{ss.evaluation} (as it must by the equivariance described there!). 
\end{remark}

There is an $N>0$ and a new vector $v^{(p)} \in \pi^{(p)}$, unique up to scalar multiples, that is fixed by $K_1^{(p)}(N)$. The vector $v^{(p)}$ is a $U_\ell$ eigenvector for all $\ell | N$ and a $T_\ell$ eigenvector for all $\ell \not| pN$. Since $v$ generates $\pi^{(p)}$, we can identify $W_\pi$ with a closed subspace of $(\mathbb{V}_{b,\mathbb{C}_p}^\cusp)^{K_1^{(p)}(N)}$ by evaluation at $v^{(p)}$. If we then apply $\Kir$, we obtain an embedding
\[ W_\pi \hookrightarrow C^\bdd(\mathbb{A}_{f}^\times, \mathbb{C}_p). \]
Since $B_1(\hat{\mathbb{Z}}^{(p)}) \leq K^{(p)}_1(N)$, by the equivariance of $\Kir$, we find the embedding factors through the $B_1(\hat{\mathbb{Z}}^{(p)})$-invariants on the right. Writing 
\[ \mathbb{A}_f^\times = \mathbb{Q}_p^\times \times \mathbb{Z}_{(p), >0}^\times \times \hat{\mathbb{Z}}^{(p),\times}\]
and considering the action of $U(\hat{\mathbb{Z}}^{(p)})$, we find that the image lies in the functions supported on 
$\mathbb{Q}_p^\times \times \prod'_{\ell \neq p} \ell^{\mathbb{Z}_{\geq 0}} \times \hat{\mathbb{Z}}^{(p),\times}$. Considering the torus part of the action, we find the function does not depend on the last coordinate, so we obtain a function on $\mathbb{Q}_p^\times \times \prod'_{\ell \neq p}  \ell^{\mathbb{Z}_{\geq 0}}$. Having moreover fixed the $T_\ell$ and $U_\ell$ eigenvalues, by the usual computation of the effect of these operators on Fourier expansions, we find the composition with restriction to $\mathbb{Q}_p^\times \times 1$ is a topological embedding
\[ W_\pi \hookrightarrow C^\bdd(\mathbb{Q}_p^\times, \mathbb{C}_p). \]
Note that since $q$-expansions for elements of $\mathbb{V}_{b,\cusp}$ are, modulo $p^k$, power series in $q^{1/N}$ for some $N$, it follows from the definition of $\Kir$ (or, in a much more round-about-way, from \cref{main.completion-kirillov-model}) that this map factors as
\begin{equation}\label{eq.local-component-embedding} W_\pi \hookrightarrow C_\infty^{\bdd}(\mathbb{Q}_{p}^\times, \mathbb{C}_p). \end{equation}
where the right-hand side denotes functions that go to zero at $\infty$. 

Now, since $\pi_p \otimes \pi^{(p)} \subseteq S_{k, \mathbb{C}_p}$, the map $\eval_k$ of \cref{eq.eval-map-def} induces a map
\[ \pi_p \rightarrow W_\pi. \]
Composing with \cref{eq.local-component-embedding}, we obtain an embedding of $\pi_p$ into $C^{\bdd}(\mathbb{Q}_p^\times, \mathbb{C}_p)$. By the equivariance properties of $\eval_k$ and $\Kir$, the image $\pi_p^\Kir$ is the smooth Kirillov model of $\pi_p$ as in \cref{theorem.jaquetandkirillov}-(2). Because \cref{eq.local-component-embedding} is a topological embedding, we conclude the image of $W_\pi$ contains the closure $(\pi_p^\Kir)^\wedge$. 

\begin{lemma}\label{lemma.kir-completion}
    Under the embedding \cref{eq.local-component-embedding}, $W_\pi$ contains 
    \[ (\pi_p^\Kir)^\wedge= \mc{S}(\mathbb{Q}_p^\times, \mathbb{C}_p) + \mathbb{C}_p \cdot 1_{\mathbb{Z}_p} \cdot \kappa_{\ord}, \]
    where $\chi_\ord$ is the ordinary character appearing as a subrepresentation of $\rho_p$ if it exists (i.e. if $\pi$ is ordinary ---  see \cref{sss.ordinary-vector}) and $0$ otherwise.  
\end{lemma}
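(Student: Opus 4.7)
The plan is to reduce the lemma to a case-by-case computation of the closure $(\pi_p^\Kir)^\wedge$ of the smooth Kirillov model in $C_\infty^\bdd(\mathbb{Q}_p^\times, \mathbb{C}_p)$: the discussion immediately preceding the statement already shows $W_\pi \supseteq (\pi_p^\Kir)^\wedge$, since $\Kir \circ \eval_k$ is the smooth Kirillov model and \cref{eq.local-component-embedding} is a topological embedding. So no further input specific to $W_\pi$ is required, and the proof amounts to computing this closure explicitly.

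First I would observe that $C_c^\sm(\mathbb{Q}_p^\times, \mathbb{C}_p) \subseteq \pi_p^\Kir$ (by \cref{theorem.jaquetandkirillov}-(2).(b)), and its sup-norm closure inside $C_\infty^\bdd$ is exactly $\mc{S}(\mathbb{Q}_p^\times, \mathbb{C}_p)$. One direction is immediate because $\mc{S}$ is closed and contains $C_c^\sm$; for the other, one approximates $f \in \mc{S}$ by its restrictions to the annuli $p^{-n}\mathbb{Z}_p \setminus p^n\mathbb{Z}_p$, further made locally constant, using that $\mathbb{Q}_p^\times$ is locally profinite and $f$ is uniformly continuous on each such annulus. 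This accounts for the first summand in the statement.

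Next I would use \cref{theorem.jaquetandkirillov}-(2).(c) together with \cref{example.kirillov-models} to identify the finite-dimensional quotient $\pi_p^\Kir/C_c^\sm$ with $J(\pi_p)|_{\mathbb{Q}_p^\times \times 1}$ via the basis vectors $1_{\mathbb{Z}_p} \cdot \epsilon|_{\mathbb{Q}_p^\times \times 1}$, indexed by the characters $\epsilon$ of $T(\mathbb{Q}_p)$ occurring as subquotients of $J(\pi_p)$. By our normalizations (\cref{sss.wd-and-normalization}), each such $\chi := \epsilon|_{\mathbb{Q}_p^\times \times 1}$ appears as a character in the Weil--Deligne representation $\sigma_p$ attached to $\pi$ (\cref{eq.jacquet-irreducible-principal-series}) or in its $N=0$ part (\cref{eq.jacquet-special}). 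Since $\sigma_p$ has Hodge--Tate weights $0$ and $k-1$ with $k \geq 2$, every such $\chi$ satisfies $|\chi(p)| \leq 1$, with equality for at most one $\chi$ (in the principal-series case, for instance, the product of the two values at $p$ has absolute value $|p^{k-1}| < 1$, so at most one factor can be a unit).

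Finally I would conclude: if $|\chi(p)| < 1$, then $1_{\mathbb{Z}_p} \cdot \chi$ decays to $0$ as $v_p(z) \to \infty$ and hence already lies in $\mc{S}(\mathbb{Q}_p^\times, \mathbb{C}_p)$, contributing nothing new; if there exists a $\chi$ with $|\chi(p)| = 1$, then $\pi$ is ordinary in the sense of \cref{sss.ordinary-vector} with $\chi = \chi_\ord$, and by \cref{lemma.classical-ordinary} this $\chi_\ord$ occurs as a subrepresentation of $\rho_p$; in this case $1_{\mathbb{Z}_p} \cdot \chi_\ord$ is bounded and has non-zero limit at $0 \in \mathbb{Q}_p$, so it descends to a non-trivial class in the quotient by $\mc{S}$, giving exactly the extra summand in the statement. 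The only mildly delicate step is the normalization bookkeeping that matches the characters of $J(\pi_p)|_{\mathbb{Q}_p^\times\times 1}$ with those of $\sigma_p$ (and hence of $\rho_p$), which is handled by \cref{sss.wd-and-normalization} and \cref{lemma.classical-ordinary}; everything else is formal.
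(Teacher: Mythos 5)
Your proof follows essentially the same route as the paper's: the containment $W_\pi \supseteq (\pi_p^\Kir)^\wedge$ is already established in the preceding paragraph, the closure of $C_c^\sm(\mathbb{Q}_p^\times,\mathbb{C}_p)$ is $\mc{S}(\mathbb{Q}_p^\times,\mathbb{C}_p)$, the remaining finite-dimensional space is computed from $J(\pi_p)$ via \cref{example.kirillov-models}, at most one character has $|\chi(p)|=1$, and \cref{lemma.classical-ordinary} (with \cref{remark.canonical-lift-Kirillov-ordinary}) identifies that character with the subrepresentation of $\rho_p$.

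The one place where you and the paper diverge slightly is the justification that \emph{all} the characters $\chi$ arising from $J(\pi_p)$ satisfy $|\chi(p)|\le 1$. You assert this from the fact that $\sigma_p$ has Hodge--Tate weights $0$ and $k-1$, but the equality $|\chi_1(p)\chi_2(p)|=|p^{k-1}|$ (which is what that weight statement gives you, per \cref{sss.ordinary-vector}) only controls the \emph{product}; to get the individual bounds $|\chi_i(p)|\le 1$ you would additionally need a Newton-above-Hodge (weak admissibility) input, which you do not invoke. The paper sidesteps this entirely: the functions $1_{\mathbb{Z}_p}\cdot\chi$ live in the image of \cref{eq.local-component-embedding}, hence are bounded by construction, which forces $|\chi(p)|\le 1$ directly and without any crystalline input. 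Your Hodge--Tate argument can be made correct, but the boundedness argument is cleaner and is what the paper uses; you should either adopt it or supply the missing admissibility step.

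Everything else (the approximation argument showing the closure of $C_c^\sm$ is $\mc{S}$, the use of \cref{sss.wd-and-normalization} to match $J(\pi_p)|_{\mathbb{Q}_p^\times\times 1}$ with $\sigma_p$, and the final application of \cref{lemma.classical-ordinary}) is correct and matches the paper's reasoning.
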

\begin{proof}
    It remains to compute $(\pi_p^\Kir)^\wedge$. The piece $\mc{S}(\mathbb{Q}_p^\times, \mathbb{C}_p)$ comes from taking the closure of $C_c^{\sm}(\mathbb{Q}_p^\times, \mathbb{C}_p)$, which is always contained in $\pi_p^\Kir$ by \cref{theorem.jaquetandkirillov}-(2)-(b). The ordinary character is obtained by \cref{lemma.classical-ordinary} using \cref{remark.canonical-lift-Kirillov-ordinary}. 
    
    It remains only to verify that no other vectors lie in the completion $\pi_p^\Kir$. This follows by observing that any other of the lines appearing in the presentations of the Kirillov models in \cref{example.kirillov-models} already lie in $\mc{S}(\mathbb{Q}_p^\times, \mathbb{C}_p)$. Indeed, for the functions $1_{\mathbb{Z}_p}\cdot \chi$ that appear, by construction of our embedding, they lie in $C^\bdd(\mathbb{Q}_p^\times, \mathbb{C}_p)$, thus the characters $\chi$ must satisfy $|\chi(p)| \leq 1$. If $|\chi(p)| < 1$, then this function lies in $\mc{S}(\mathbb{Q}_p^\times, \mathbb{C}_p)$, so it suffices to observe that there is at most one $\chi$ appearing with $|\chi(p)| = 1$. But this follows from the discussion of uniqueness of the ordinary line in \cref{sss.ordinary-vector}.
\end{proof}

\subsection{The upper bound}\label{ss.upper-bound}
Since $\mathcal{S}(\mathbb{Q}_p^\times, \mathbb{C}_p) \subseteq W_\pi \subseteq C^\bdd_{\infty}(\mathbb{Q}_p, \mathbb{C}_p)$ and 
\[ C^\bdd_\infty(\mathbb{Q}_p, \mathbb{C}_p)_{\tilde{\mu}_{p^\infty}}=C^\bdd_\infty(\mathbb{Q}_p, \mathbb{C}_p) / \mc{S},  \]
we find 
\[ W_{\pi}/\mathcal{S}(\mathbb{Q}_p^\times, \mathbb{C}_p) = W_{\pi, \tilde{\mu}_{p^\infty}}. \]

Since $\pi$ can be defined and $v^{(p)}$ chosen over a finite extension $L/\mathbb{Q}_p$, and the construction of $W_\pi$ and formation of $\tilde{\mu}_{p^\infty}$-coinvariants are compatible with base change, we may replace $\mathbb{C}_p$ everywhere with $L$ in our study of $W_{\pi, \tilde{\mu}_{p^\infty}}$.

Because the diagonal $\mathbb{Q}_p^\times \subseteq T(\mathbb{Q}_p)$ acts on $W_\pi$ and thus $W_{\pi, \tilde{\mu}_{p^\infty}}$ through a  character by \cref{lemma.central-character}, \cref{maincor.admissibility} implies that $W_{\tilde{\mu}_{p^\infty}}$ is an admissible $L$-Banach representation of $\mathbb{Q}_p^\times$ embedded as $\mathbb{Q}_p^\times \times 1$ thus, in particular, of $1+2p\mathbb{Z}_p$ --- i.e., it is the dual of a finitely generated module over 
\[ \mathcal{O}_L[[1+2p\mathbb{Z}_p]][1/p]=\mathcal{O}_L[[t]][1/p]. \]
Since this is a principal ideal domain, the structure theory of modules over a PID implies that, after possibly replacing $L$ with a larger extension, the $1+2p\mathbb{Z}_p$-module $W_{\pi, \tilde{\mu}_{p^\infty}}$ is a direct sum of a finite number of copies of $C(1+2p\mathbb{Z}_p, L)$ (corresponding to the free part of the dual) and a finite sum of modules supported at a single character and finite dimensional over $L$ (correponding to the torsion of the dual). 

For each character space that appears in $W_{\pi, \tilde{\mu}_{p^\infty}}$ (which is all of them if it contains a copy of $C(1+2p\mathbb{Z}_p, L)$), we can decompose the associated finite dimensional character space for the action of $1+2p\mathbb{Z}_p$ according to characters of $\mu(\mathbb{Q}_p)$, and, up to enlarging $L$, further decompose into generalized eigenspaces for the action of $p \in \mathbb{Q}_p^\times$. On each such eigenspace, the eigenvalue $\alpha$ must be a unit since $\mathbb{Q}_p^\times$ acts unitarily (the action of $T(\mathbb{Q}_p)$ is unitary already since it preserves $\mathbb{V}_b^\cusp$). 

For $\chi$ a character of $\mathbb{Q}_p^\times$ that appears, let $\kappa =\chi|_{\mathbb{Z}_p^\times}$ so that $\chi(p^ku)=\alpha^k \kappa(u)$. We may take the associated ordinary form $w \in \mathbb{V}_{\Mant}^{\cusp, K_1^{(p)}(N)}$ using \cref{main.hida-coinvariants}. Then, the twist $w'=w\otimes \kappa^{-1} \in \mathbb{V}_{\Katz'}^{\cusp, K_1^{(p)}(N)}$ is an ordinary eigenform whose associated Galois representation is the twist $\rho \otimes \kappa^{-1}$ (where $\kappa^{-1}$ is composed with the $p$-adic cyclotomic character). From \cref{prop.ordinary-p-adicmf-unramified-sub} we deduce that $\ur_{\alpha}$, the unramified character sending $p$ to $\alpha$, is a subrepresentation of $\rho_p \otimes \kappa^{-1}$, and thus that $\chi$ is a subrepresentation of $\rho_p$. 

In particular, since $\rho_p$ contains at most two characters, we deduce that $W_{\tilde{\mu}_{p^\infty}}$ cannot contain any copies of $C(1+2p\mathbb{Z}_p, L)$ (as otherwise there would be infinitely many distinct characters appearing in the above argument). Writing these characters as $\chi_i$, from the admisibility of $W_{\pi, \tilde{\mu}_{p^\infty}}$ it then follows that each vector can be decomposed into a sum of generalized eigenvectors for the $\chi_i|_{1+2p\mathbb{Z}_p}$ (note that these characters are distinct if there are two different $i$ because of the Hodge-Tate weights). Fixing a $\chi=\chi_i$, any such generalized eigenvector is then an eigenvector for $\chi|_{\mu(\mathbb{Q}_p)}$, and is in the $\alpha=\chi(p)$-generalized eigenspace for the action of $p$. 

In particular, by \cref{theorem.ordinary-iso-coinvariants}, such a vector is represented by a generalized $\alpha$-eigenvector for $U_p$ acting on $C(\mathbb{Z}_p\backslash\{0\}, \mathbb{C}_p)$. If $(U_p - \alpha)^m f=0$, then we can write $f(p^k u)=f_1 (u) \alpha^k + f_2(u) \alpha^k k + \ldots f_m(u)\alpha^k k^{m-1}$ where $f_i(u) \in C(\mathbb{Z}_p^\times, \mathbb{C}_p)$. Writing $\mathbb{Z}_p^\times=\mu(\mathbb{Q}_p) \times (1+2p\mathbb{Z}_p)$, each $f_i(u)$ must be a generalized $\chi|_{1+2p\mathbb{Z}_p}$-eigenvector and a $\chi|_{\mu(\mathbb{Q}_p)}$-eigenvector, thus, writing $u=(\zeta,t)$, $f_i(u)=\chi(\zeta) (\sum a_{i,j} \chi(t)\log(t)^j)$ for constants $a_{i,j} \in \mathbb{C}_p$. This gives the description of the upper bound in \cref{main.lg}. 

\subsection{The lower bound}\label{ss.lower-bound-companion}

Let $\chi$ be a character appearing in $\rho_p$. Since the Hodge-Tate weights of $\rho_p$ are $0$ and $k-1$, $\Lie \chi = 0$ or $\Lie \chi=1-k$. 

Suppose first that $\Lie \chi=0$. Then $\WD(\chi)=\chi \subseteq \sigma_p = \WD(\rho_p)$, thus $\chi \subseteq J(\pi_p)$, $\pi$ is ordinary, and $\chi=\chi_\ord$. In this case, we have already established $1_{\mathbb{Z}_p} \cdot \chi \in W_\pi$ in \cref{lemma.kir-completion}. 

On the other hand, if $\Lie \kappa=1-k$, then the quotient $\chi'$ of $\rho_p$ by $\chi$ has Hodge-Tate weight $0$, so $\chi'=\WD(\chi')$ is ordinary. As in the proof of \cref{lemma.classical-ordinary}, $\chi'$ is also a subrepresentation of $\rho_p$. Thus $\rho_p$ is split. Twisting, we may assume $\chi'$ is unramified, so that the vector $1_{\mathbb{Z}_p} \cdot \chi' \in W_\pi$ corresponds to a classical newform $f$ (recall that we have already arranged in our choice of $v^{(p)}$ to be new away from $p$). In this case, let $\kappa=\WD(\chi)|_{\mathbb{Z}_p^\times}$, let $\tilde{\chi}(z)=\WD(\chi) \kappa^{-1}(z|z|)$, and let $\tilde{\pi} = \pi \otimes \kappa^{-1}$. Then the function $1_{\mathbb{Z}_p} \cdot \tilde{\chi}$ lies in the Kirillov model of $\tilde{\pi}$  and corresponds to the classical newform $\tilde{f}$ of \cite[Theorem 1.1.3]{BreuilEmerton.RepresentationPAdiquesOrdinarsDeGl2Qp}. In particular, since $\rho_p$ is split, \cite[Theorem 1.1.3]{BreuilEmerton.RepresentationPAdiquesOrdinarsDeGl2Qp} implies 
$\tilde{f}=\theta^{k-1}(g)$ for $g$ an ordinary $p$-adic modular form of weight $2-k$. We note that, as in \cite{Howe.AUnipotentCircleActionOnpAdicModularForms}, on $\mathbb{V}_{\Katz'}$, $\theta^{k-1}$ is given by  acting by $z^{k-1} \in C(\mathbb{Z}_p, \mathbb{C}_p)$ in $\mathbb{V}_{\Mant}$ (which does not preserve $\mathbb{V}_{\Katz'}$) followed by twisting by $z^{1-k}$ in $C(\mathbb{Z}_p^\times, \mathbb{C}_p)$ (which returns to $\mathbb{V}_{\Katz'}$). In particular, if we consider the twist $\tilde{g}:=g \otimes z^{1-k}$, then $\tilde{g}$ lies in $W_{\tilde{\pi}}$ and $z^{k-1} \cdot \tilde{g} = \tilde{f}$. In particular, $(\tilde{g}\otimes \kappa)$ lies in $W_{\pi}$, and $z^{k-1} \cdot (\tilde{g} \otimes \kappa) = \tilde{f} \otimes \kappa$, which, in the Kirillov model, is $1_{\mathbb{Z}_p} \cdot \tilde{\chi}\kappa(z|z|) = 1_{\mathbb{Z}_p} \cdot \WD(\chi)$. Since $\chi$ has Hodge-Tate weight $k-1$, $\WD(\chi)=\chi z^{k-1}$, thus  $\tilde{g}\otimes \kappa$ gives $1_{\mathbb{Z}_p} \cdot \chi$ in the Kirillov model.

\section{A conjectural Hida theory and admissible local-global compatibility for general Igusa varieties}\label{s.conjecture}

After some preliminary setup in \cref{ss.conj-setup}, the first main contribution of this section is a precise finiteness conjecture generalizing \cref{maincor.admissibility} to more general Igusa varieties (\cref{conj.precise-coinvariants}). In \cref{ss.model-representations} we then explain a natural geometric construction of representations that, in the case of the $\GL_2$ ordinary Caraini-Scholze Igusa variety, recovers the characters appearing in \cref{main.lg} as the dual of a space of bounded sections on a (quotient of) the associated local Shimura variety (\cref{example.model-construction}). This construction comes with a mechanism, via Rapoport-Zink/Caraiani-Scholze uniformization, for relating it directly to functions on Igusa varieties. Based on this construction and \cref{main.lg}, in \cref{ss.conj-local-global-compatibility} we speculate on generalizations of the admissible local-global compatibility to the setting of \cref{conj.precise-coinvariants}. 

\subsection{Setup}\label{ss.conj-setup}
Let $(G,X)$ be a PEL Shimura datum of type A/C with $G$ anisotropic mod center, let $\mu$ be an associated Hodge cocharacter, and let $p$ be a prime where $G$ is unramified (i.e. $G_{\mathbb{Q}_p}$ is quasisplit and split over an unramified extension). Let $b \in B(G_{\mathbb{Q}_p},\mu^{-1})$, the Kottwitz set of \cite{Kottwitz.IsocrystalsWithAdditionalStructure}. We fix an embedding of the reflex field into $\Qpbreve$, which exists by our unramified assumption. 

For a compact open subgroup $K^p\leq G(\mathbb{A}_f^{(p)})$ we let $\Ig^b_{K^p}$ be the associated Caraiani-Scholze formal scheme (denoted $\mathrm{Ig}^b_{\mathcal{O}_{\breve{E}}}$ in \cite[bottom of p. 718]{CaraianiScholze.OnTheGenericPartOfTheCohomologyOfCompactUnitaryShimuraVarieties}). By its construction and the argument of \cite[Proposition 3.3.4] {CaraianiScholze.OnTheGenericPartOfTheCohomologyOfNonCompactUnitaryShimuraVarieties}, $\Ig_{K^p}^b$ is an affine $p$-adic formal scheme --- it is represented by the Witt vectors of its special fiber $\mathrm{Ig}_{K^p}^b=\Ig^b_{K^p, \overline{\mathbb{F}}_p}$, a perfect affine scheme. We write $\Ig^b=\lim_{K^p} \Ig_{K^p}^b$, and it follows that $\Ig^b$ is also an affine $p$-adic formal scheme. It admits a natural action of $\Aut_{G}(\tilde{\mbb{X}}_b) \times \GL_2(\mathbb{A}_f^{(p)})$, where the first group is as defined in \cite[Definition 4.2.9]{CaraianiScholze.OnTheGenericPartOfTheCohomologyOfCompactUnitaryShimuraVarieties}. We also write $\mathrm{Ig}^b = \Ig^b_{ \overline{\mathbb{F}}_p}.$

\begin{remark}\label{remark.compact-setup}
 For simplicity, we have set up our hypotheses so that the associated Shimura varieties are proper. In particular, in contrast to our main results, we do not need to pass to a cuspidal part of $\mathcal{O}(\Ig^b)$ below. For more general Shimura varieties, to define good spaces of $p$-adic automorphic forms one should instead use functions on a partial compactification of $\Ig^b$ as in \cite{CaraianiScholze.OnTheGenericPartOfTheCohomologyOfNonCompactUnitaryShimuraVarieties}; note, however, that even in the non-compact case many of the Igusa varieties will not interact with the boundary thus will not need a partial compactification (for example, the basic Igusa variety never needs to be compactified). The conjectures and discussion below can thus be extended as-is to these cases.   
\end{remark}

\subsection{Coinvariants}\label{ss.coinvariants}
Let $\tilde{U}_b$ be the connected component of the identity in $\Aut_{G}(\tilde{\mbb{X}}_b)$, and let $G_b$ be the automorphism group of the $G$-isocrystal associated to $b$ (a form of $M_{\nu}$, the centralizer of the slope morphism $\nu$ of $b$). Then there is a natural decomposition
\[ \Aut_{G}(\tilde{\mbb{X}}_b) = \ul{G_b(\mathbb{Q}_p)} \ltimes \tilde{U}_b.\]

By \cite[Proposition 4.2.11]{CaraianiScholze.OnTheGenericPartOfTheCohomologyOfCompactUnitaryShimuraVarieties}, $\tilde{U}_b$  is isomorphic to $\Spf R$ where 
\[ R \cong \Zpbreve[[x_1^{1/p^\infty}, \ldots, x_n^{1/p^\infty}]] \]
is equipped with the $(p,x_1, \ldots, x_n)$-adic topology, and the identity automorphism corresponds to the $\Zpbreve$ point given by setting $x_i=0$, $i = 1, \ldots, n$. In particular, 
\[ \tilde{U}_b \times_{\Spf \Zpbreve} \tilde{U}_b = \Spf (R \hat{\otimes}_{\Zpbreve} R). \] 
We write $R^*$ for the continuous $\Zpbreve$-linear homomorphisms from $R$ to $\Zpbreve$. We claim that the group structure $\tilde{U}_b$ induces an algebra structure on $R^*$. Indeed, the group law is given by a  topological comultiplication $R \rightarrow R \hat{\otimes}_{\Zpbreve} R$, which induces a (not necessarily commutative) multiplication through the composition 
\[ R^* \otimes_{\Zpbreve} R^* \rightarrow (R \hat{\otimes}_{\Zpbreve} R)^* \rightarrow R^*.\]
 The inclusion of the constants $\epsilon:\Zpbreve \rightarrow R$ defining the $\Zpbreve$-algebra structure on $R$ induces an augmentation homomorphism $\epsilon^*: R^* \rightarrow \Zpbreve$ whose kernel we write as $I$. 

Suppose given an affine $p$-adic formal scheme $\Spf V/\Spf \Zpbreve$ with an action of $\tilde{U}_b$. Then the action map $\tilde{U}_b \times_{\Spf \Zpbreve} \Spf V \rightarrow \Spf V$ is induced by a map 
\[ V \rightarrow V \hat{\otimes}_{\Zpbreve} R= V[[x_1^{1/p^\infty},\ldots,x_n^{1/p^\infty}]]. \]
In particular, we  obtain an algebra action of $R^*$ on $V$ by contraction with this map. We then define the topological coinvariants by 
\[ V_{\tilde{U}_b} := V \hat{\otimes}_{R^*} R^*/I := \lim_{n} V/p^n \otimes_{R^*} R^*/(I,p^n) = \lim_n V_{\Zpbreve/p^n} / I_{\Zpbreve/p^n} \cdot V_{\Zpbreve/p^n}. \]

If $\Spf V$ is equipped with an action of $\Aut_G(\tilde{\mathbb{X}}_b)$ extending the action of $\tilde{U}_b$, then there is a residual action of $G_b(\mathbb{Q}_p)$ on the topological co-invariants --- indeed, $G_b(\mathbb{Q}_p)$ acts continuously on each term and preserves $I$. Using the natural action of $\Aut_{G}(\tilde{\mathbb{X}}_b)$ on $\Ig^b_{K^p}$, we can now formulate our conjecture: 

\begin{conjecture}\label{conj.precise-coinvariants}With notation as above, for $K^p \leq G(\mathbb{A}_f^{(p)})$ a compact open subgroup, the $p$-inverted topological coinvariants 
\[ \mathcal{O}(\Ig^b_{K^p})_{\tilde{U}_b}[1/p] \]
are an admissible Banach $G_b(\mathbb{Q}_p)$-representation. Equivalently, 
\[ \mathcal{O}(\mathrm{Ig}^b_{K^p})_{\tilde{U}_b} = \mathcal{O}(\mathrm{Ig}^b_{K^p}) / I_{\overline{\mathbb{F}}_p} \cdot \mathcal{O}(\mathrm{Ig}^b_{K^p})  \]
is an admissible\footnote{Since $G_b(\mathbb{Q}_p)$ is locally pro-$p$, admissibility of a smooth representation of $G_b(\mathbb{Q}_p)$ on a $\overline{\mathbb{F}}_p$-vector space can be checked on a single compact open pro-$p$ group, i.e. it suffices to know to that for any single compact open pro-$p$ subgroup $K_b \leq G_b(\mathbb{Q}_p)$, the $K_b$-invariants are a finite dimensional $\overline{\mathbb{F}}_p$-vector space.} smooth $G_b(\mathbb{Q}_p)$-representation over $\overline{\mathbb{F}}_p$.   	
\end{conjecture}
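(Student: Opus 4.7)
The plan is to extend the three-step argument behind \cref{maincor.admissibility}: build a ``Mantovan-type'' intermediate quotient of $\Ig^b$, identify the topological coinvariants with the image of an ordinary projector on its ring of functions, and invoke a finiteness theorem for that ordinary part. The main obstacle is the third step for non-basic, non-$\mu$-ordinary $b$, where no direct analog of Hida's finiteness theorem is currently available.

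Concretely, I would first choose a $G_b(\mathbb{Q}_p)$-stable $\mathbb{Z}_p$-lattice $\Lambda \leq \tilde{U}_b$ with $\Lambda[1/p] = \tilde{U}_b$, playing the role of $\mathbb{Z}_p(1) \leq \tilde{\mu}_{p^\infty}$, so that the fpqc quotient $\Ig^b_{\Mant} := \Ig^b/\Lambda$ is represented by an affine $p$-adic formal scheme of finite type modulo each power of $p$ (cf.\ \cite[Proposition 4.2.11]{CaraianiScholze.OnTheGenericPartOfTheCohomologyOfCompactUnitaryShimuraVarieties}). The residual action of the $p$-divisible formal group $\tilde{U}_b/\Lambda$ should give $\mathbb{V}_{\Mant} := \mathcal{O}(\Ig^b_{\Mant})$ the structure of a representation of the dual algebra $C(\Lambda^\vee, \mathbb{Z}_p)$, generalizing the Fourier-theoretic setup of \cref{s.rep-theory} in the framework anticipated in \cite{GrahamVanHoftenHowe.TowardsAFourierTheoryForBanachColmezSpaces}.

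Second, I would pick $\gamma \in G_b(\mathbb{Q}_p)$ whose conjugation action strictly shrinks $\Lambda$ --- an analog of $\diag(p,1)$ --- and use it to define a $U_\gamma$-operator as in \cref{ss.Up}, together with the ordinary projector $e := \lim_n U_\gamma^{n!}$. A direct computation in the Fourier picture, essentially identical to the proof of \cref{theorem.ordinary-iso-coinvariants} since both the kernel of $e$ and the kernel of the coinvariants map are characterized by Fourier support avoiding a neighborhood of $0 \in \Lambda^\vee[1/p]$, would then produce canonical isomorphisms
\[ e\mathbb{V}_{\Mant} \xrightarrow{\sim} (\mathbb{V}_{\Mant})_{\tilde{U}_b/\Lambda} \xrightarrow{\sim} \mathcal{O}(\Ig^b)_{\tilde{U}_b}. \]

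Third, one must prove admissibility of $e\mathbb{V}_{\Mant}^{K^p}[1/p]$ as a Banach representation of a suitable subgroup of $G_b(\mathbb{Q}_p)$, from which admissibility of the full coinvariants follows as in the proof of \cref{prop.admisibillity}. This is the hard step. For basic $b$ it is nearly immediate because $\tilde{U}_b$ degenerates and $\Ig^b$ becomes a profinite set on which $G_b(\mathbb{Q}_p)$ acts with the right finiteness built into the Caraiani-Scholze description. For $\mu$-ordinary $b$ one should be able to plug in the Hida-theoretic finiteness results of Hida, Brasca, and Bijakowski-Pilloni-Stroh. For intermediate Newton strata the required finiteness is open, and the most plausible route is to combine recent work of van Hoften and collaborators on the geometry of non-basic strata with an integral classicality/overconvergent-companion argument in the spirit of \cite{Kisin.OverconvergentModularFormsAndTheFontaineMazurConjecture}. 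An alternative is to attack the equivalent mod-$p$ formulation directly: for each compact open pro-$p$ subgroup $K_b \leq G_b(\mathbb{Q}_p)$, identify the space of $K_b$-invariants in $\mathcal{O}(\mathrm{Ig}^b_{K^p}) / I \cdot \mathcal{O}(\mathrm{Ig}^b_{K^p})$ with the global sections of a coherent sheaf on a finite-type $\overline{\mathbb{F}}_p$-scheme arising as a quotient of $\mathrm{Ig}^b_{K^p}$, and conclude by properness of the ambient Shimura variety.
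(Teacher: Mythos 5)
This statement is a conjecture — the paper offers no proof, and in \cref{remark.mu-ord-and-basic} explicitly states that for any intermediary Newton stratum the conjecture is \emph{completely open}. So there is nothing to compare your argument against; your proposal is a proof strategy for an open problem, and you yourself concede the decisive point in your third step.

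Beyond the acknowledged openness of the finiteness step, there are earlier gaps. Your step one assumes $\tilde{U}_b$ is a $\mathbb{Q}_p$-vector space admitting a $G_b(\mathbb{Q}_p)$-stable $\mathbb{Z}_p$-lattice $\Lambda$ with $\Lambda[1/p] = \tilde{U}_b$, so that $\tilde{U}_b/\Lambda$ is a $p$-divisible formal group and the Fourier duality of \cref{s.rep-theory} applies verbatim. That picture is special to the $\GL_2$ ordinary case where $\tilde{U}_b = \tilde{\mu}_{p^\infty}$; for a general $b$, $\tilde{U}_b$ is a unipotent group (the connected component of $\Aut_G(\tilde{\mathbb{X}}_b)$) that need not be commutative, need not arise as the universal cover of a single $p$-divisible group, and has no evident ``lattice'' or linear dual $\Lambda^\vee$. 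The paper's definition of topological coinvariants in \cref{ss.coinvariants} deliberately sidesteps all of this by working directly with the augmentation ideal $I \subseteq R^*$ of the (possibly non-commutative) convolution algebra, precisely because a Mantovan-type intermediate quotient, a $U_\gamma$-operator, and an ordinary projector $e$ are not available in general. Your step two inherits the same difficulty: the identification $e\mathbb{V}_{\Mant} \cong \mathcal{O}(\Ig^b)_{\tilde{U}_b}$ rests on a Fourier-support characterization that has not been set up in this generality. None of this is to say the strategy is unreasonable as a research program — it is essentially what the paper's own evidence for the $\mu$-ordinary and basic cases amounts to, as sketched in \cref{remark.mu-ud-and-basic} — but it is not a proof, and it is not what the paper does (the paper proves nothing here).

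Finally, your closing alternative for the mod-$p$ formulation — exhibiting the $K_b$-invariants of $\mathcal{O}(\mathrm{Ig}^b_{K^p})/I\cdot\mathcal{O}(\mathrm{Ig}^b_{K^p})$ as sections of a coherent sheaf on a finite-type quotient — would indeed suffice, but constructing such a quotient and verifying that the coinvariants descend to it is itself the content of the conjecture, not an independent route around it.
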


\begin{example}\cref{maincor.admissibility} establishes the analog of \cref{conj.precise-coinvariants} for $(\GL_2, \mathbb{H}^\pm)$ and $b=\begin{bmatrix}p^{-1} & 0 \\ 0 & 1\end{bmatrix}$, i.e.  for the Igusa variety attached to the ordinary locus on the modular curve (where we study cuspidal functions on a partial compactification, which are not, strictly speaking, covered by the setup; cf. \cref{remark.compact-setup}). 
\end{example}

 \begin{remark}\label{remark.mu-ord-and-basic}
 When $b$ is ($\mu$-)ordinary, corresponding to the open ($\mathrm{mod}\, p$) Newton stratum, \cref{conj.precise-coinvariants} can sometimes be related to existing generalizations of Hida theory as in the proof of \cref{maincor.admissibility}. When $b$ is basic, corresponding to the  closed ($\mathrm{mod}\, p$) Newton stratum, $\Ig_b$ is zero dimensional and $\tilde{U}_b$ is trivial, so the conjecture is just that $\mathbb{V}_{b,\Qpbreve}^{K^p}$ is admissible. However, in this case $\mathbb{V}_{b,\Qpbreve}^{K^p}$ can be described explicity as a finite product of function spaces of the form 
 \[ \Cont(G'(\mathbb{Q})\backslash G'(\mathbb{A}_f)/K^p, \Qpbreve) \]
 where in each term $G'$ is an inner form of $G$ that is compact modulo center at the real place and such that $G'_{\mathbb{Q}_p}=G_b$ (see, e.g., \cite{Howe.ThepAdicJacquetLanglandsCorrespondenceAndAQuestionOfSerre} for a detailed account of the example of $G=\GL_2$, in which case there is only a single term and $G'$ is the group of units in the quaternion algebra over $\mathbb{Q}_p$ ramified at $p$ and $\infty$, and \cite[Corollary 11.28]{Zhang.APelTypeIgusaStackAndThePAdicGeometryOfShimuraVarieties} in general);  the admissibility is then immediate. To our knowledge, the conjecture for any intermediary Newton stratum  is completely open. 
 \end{remark}

\subsection{Model representations}\label{ss.model-representations}

We consider the infinite level local Shimura variety $\mathcal{M}_{b,[\mu]}$ over $\mathbb{C}_p$ as in \cite{Scholze.pAdicGeometry} (we use the notation as in \cite{HoweKlevdal.AdmissiblePairsAndpAdicHodgeStructuresIITheBiAnalyticAxLindemannTheorem}) and its $v$-sheaf quotient 
\[ \mathcal{M}_{b,[\mu]}':=\mathcal{M}_{b,[\mu]}/\tilde{U}_b. \]
The latter is a $G_b(\mathbb{Q}_p)$-torsor over the Newton stratum $\Fl_{[\mu],[b]}$, a locally closed subdiamond of the the rigid analytic flag variety $\Fl_{[\mu]}$ over $\mathbb{C}_p$. 

\subsubsection{} For any $G$-equivariant vector bundle $\mathcal{V}$ on $\Fl_{[\mu]}$, there is a natural notion of bounded sections over $\mathcal{M}_{b,[\mu]}'$,
\[ H^{0,\bdd}(\mathcal{M}_{b,[\mu]}', \mathcal{V}). \]
Indeed, any such bundle $\mc{V}$ admits an integral structure $\mathcal{V}^+$ and, by quasi-compactness of the flag variety, any two integral structures $\mathcal{V}^+_1$ and $\mathcal{V}^+_2$ are commensurate (i.e., for $n \gg 0$,  $p^n \mathcal{V}^+_1 \subseteq \mathcal{V}^+_2 \subseteq p^{-n} \mathcal{V}^+_1$). The action of $G(\mathbb{Q}_p)$-permutes these integral structures, thus we obtain a notion of bounded sections preserved by $G(\mathbb{Q}_p)$ by taking the colimit of global sections over all such integral structures. We topologize it as follows: for any integral structure $\mathcal{V}^+$, 
\[  H^{0,\bdd}(\mathcal{M}_{b,[\mu]}', \mathcal{V})= H^{0}(\mathcal{M}_{b,[\mu]}', \mathcal{V}^+)[1/p], \]
and $H^{0}(\mathcal{M}_{b,[\mu]}', \mathcal{V}^+)$ admits a natural topology as the inverse limit of the $p$-adic topologies on sections over quasi-compact opens. This topology is independent of the integral structure; we note that it is weaker than the Banach topology obtained by taking the the global sections of any integral structure with its $p$-adic topology as the unit ball. The action of $G(\mathbb{Q}_p)$ is continuous but does not typically preserve an integral lattice; by contrast, the $G_b(\mathbb{Q}_p)$-action preserves the integral lattices.  

Note that, for $\mc{S}=\lim_{K^p} \mc{S}_{K^p, \mathbb{C}_p}$ and $\pi_\HT: \mathcal{S} \rightarrow \Fl_{[\mu]}$ the Hodge-Tate period map, $\pi_\HT^* \mc{V}$ is a traditional automorphic vector bundle.
If $\bigotimes \pi_\ell$ is an irreducible smooth representation of $G(\mathbb{A}_f)$ appearing in $H^0(\mathcal{S}_{\mathbb{C}_p}, \pi_\HT^* \mc{V})$, then we expect the uniformization of the $b$-Newton stratum to yield a map of $G_b(\mathbb{Q}_p)$-representations
\begin{equation}\label{eq.homs} \Hom_{G(\mathbb{Q}_p)}(\pi_p, H^{0,\bdd}(\mathcal{M}_{b,[\mu]}', \mc{V}))^* \rightarrow \Hom_{G(\mathbb{A}_f^{(p)})}(\otimes_{\ell \neq p}\pi_\ell,  \mathcal{O}(\Ig^{b})_{\tilde{U}_b}).\end{equation}
We hope to return to the construction of this map in another work\footnote{The idea is as follows: first, all global sections of $\mc{V}$ over $\mc{S}$ are bounded since $\mc{S}$ is quasi-compact. Now, the uniformization of \cite[Lemma 4.3.20]{CaraianiScholze.OnTheGenericPartOfTheCohomologyOfCompactUnitaryShimuraVarieties} gives a map $\Ig_{b,\eta, \mathbb{C}_p} \times \mathcal{M}_{b,[\mu]} \rightarrow \mc{S}$, matching the Hodge-Tate period map on $\mc{S}$ with that on $\mathcal{M}_{b,[\mu]}$. In particular, the pullback of bounded sections on $\mc{S}$ land in bounded sections on $\Ig_{b,\eta, \mathbb{C}_p} \times \mc{M}_{b,[\mu]}$, which we expect to be given by a base change formula as $\mathcal{O}(\Ig^b) \hat{\otimes} H^{0,\bdd}(\mc{M}_{b,[\mu]}, \mc{V})$. The $G(\mathbb{Q}_p)$-action is concentrated on the local Shimura variety and the $G(\mathbb{A}_f^{(p)})$ action is concentrated on $\Ig^b$. Thus by pulling back $\bigotimes \pi_\ell \subseteq H^0(\mc{S}, \pi_\HT^* \mc{V})$ along the uniformization map, we should obtain a class in
\[ \Hom_{G(\mathbb{A}_f^{(p)})} (\otimes'_{\ell \neq p} \pi_\ell, \mathcal{O}(\Ig^b)[1/p] \hat{\otimes} \mathbb{C}_p) \hat{\otimes}_{\mathbb{C}_p} \Hom_{G(\mathbb{Q}_p)}(\pi_p, H^{0,\bdd}(\mc{M}_{b,[\mu]}, \mc{V})). \]
By construction of the uniformization map, this class is preserved by the diagonal $\tilde{G}_b$ action, thus it induces, by contraction with the right hand side, a $\tilde{G}_b$-equivariant map 
\[ \Hom_{G(\mathbb{Q}_p)}(\pi_p, H^{0,\bdd}(\mc{M}_{b,[\mu]}, \mc{V}))^* \rightarrow \Hom_{G(\mathbb{A}_f^{(p)})} (\otimes'_{\ell \neq p} \pi_\ell, \mathcal{O}(\Ig^b)[1/p]\hat{\otimes}{\mathbb{C}_p}). \]
Passing to the $\tilde{U}_b$-coinvariants and identifying the coinvariants of the dual on the left with the dual of the $\tilde{U}_b$-invariants $\Hom(\pi_p, H^{0,\bdd}(\mc{M}'_{b,[\mu]}, \mc{V}))$, one should obtain \cref{eq.homs}. There are several details to check to verify such a construction proceeds as claimed! 
}; here we wish only to understand what it suggests about local global compatibility for $\mathcal{O}(\Ig^{b})_{\tilde{U}_b}$. 

\begin{example}\label{example.model-construction}
    In the the case studied in \cref{main.lg}, there is a natural identification of $\mathcal{M}_{b,[\mu]}'$ with $U(\mathbb{Q}_p)\backslash \GL_2(\mathbb{Q}_p)$, i.e. the $\mathbb{Q}_p$-points of the base affine bundle over $\mathbb{P}^1$ --- this follows by considering the transitive action of $G(\mathbb{Q}_p)$ on $\mathcal{M}'_{b,[\mu]}$.  
    
    The global sections of the usual bundle of weight $(k_1, k_2)$ over $\mathcal{M}_{b,[\mu]}'$ can be written as 
    \[ C(U(\mathbb{Q}_p)\backslash \GL_2(\mathbb{Q}_p), \Qpbreve), \] 
    where the action of $T(\mathbb{Q}_p)=G_b(\mathbb{Q}_p)$ is by 
    \[ ((z_1, z_2)\cdot f)(x)=z_1^{k_1} z_2^{k_2} f((z_1^{-1}, z_2^{-1})x)\]
    In this presentation, the bounded sections are those functions such that $(g, (z_1,z_2))\mapsto f((z_1, z_2)g)z_1^{k_1}z_2^{k_2}$ is a bounded function on $\GL_2(\mathbb{Z}_p) \times T(\mathbb{Q}_p)$. 
    
    In particular, if we consider a smooth character $\epsilon$ of $T(\mathbb{Q}_p)$ and view the continuous (unnormalized) induction $\mathrm{Ind}_{B(\mathbb{Q}_p)}^{\GL_2(\mathbb{Q}_p)}(\epsilon)$ as the set of continuous functions on $U(\mathbb{Q}_p)\backslash \GL_2(\mathbb{\mathbb{Q}}_p)$ such that $f(tx)=\epsilon(t)f(x)$, then under this identification they correspond to bounded sections if and only if $\epsilon(z_1, z_2)=|z_1|^{-k_1}|z_2|^{-k_2} \cdot \epsilon^\circ$ where $\epsilon^\circ$ is a unitary character. 
    
    Thus, for the sheaf $\mathcal{V}_{(0,k)}$ that pulls back via $\pi_\HT$ to $\omega_{E^\vee}^k$, we find it is exactly the inductions of $\epsilon=|z_2|^{-k} \cdot \epsilon^\circ$ that live in the bounded sections. Note that the smooth vectors are precisely the $\pi_p$ appearing in ordinary representations $\pi$ as in \cref{sss.ordinary-vector}. For $\pi_p$ the smooth induction of such a character, the $G_b(\mathbb{Q}_p)=T(\mathbb{Q}_p)$-action on $\Hom(\pi_p, (H^{0,\bdd}(\mathcal{M}_{b,[\mu]}', \mathcal{V}_{0,k}))^*$ is by the character $z_2^{-k} \epsilon$. This is precisely the action corresponding to the ordinary character in \cref{conj.lg-mf}/\cref{main.lg} and \cref{remark.central-action}/\cref{lemma.central-character}.  
\end{example}

\subsection{Local-global compatibility}\label{ss.conj-local-global-compatibility}
One can thus hope\footnote{Emphasis on ```hope": what follows is very speculative!} that the admissible part of the local-global compatibility in \cref{main.lg} will generalize in the following way: suppose given $\bigotimes_{\ell}\pi_\ell$ appearing in $H^0(\mathcal{S}, \pi_\HT^* \mc{V})^\sm$ for $\mc{V}$ sufficiently regular. Then, we expect that there is an associated Galois representation $\rho: \Gal(\overline{\mathbb{Q}}/\mathbb{Q}) \rightarrow {}^{C}G(\mathbb{C}_p)$ as in \cite{BuzzardGee.TheConjecturalConnectionsBetweenAutomorphicRepresentationsAndGaloisRepresentations} and that 
\[ \Hom(\pi_p, H^{0,\bdd}(\mathcal{M}_{b,[\mu]}', \mc{V})) \]
is non-zero if and only if the $\rho_p=\rho|_{\Gal(\overline{\mathbb{Q}}_p/\mathbb{Q}_p)}$ factors through the conjugacy class $P_{\nu}$ of parabolics of ${}^C G$ determined by the slope morphism $\nu$  associated to $b$. In the non-zero case, we expect the associated map (\ref{eq.homs}) to factor through an admissible Banach quotient of 
\[ \Hom_{G(\mathbb{Q}_p)}(\pi_p, H^{0,\bdd}(\mathcal{M}_{b,[\mu]}', \mc{V}))^* \]
depending, up to finite indeterminacy, only on $\rho_p^\mathrm{ss}$, the push-out of $\rho_p$ to $M_\nu(\mathbb{C}_p)$, for $M_\nu$ the Levi quotient of $P_\nu$. When $\rho_p$ is maximally non-split, we might hope 
\begin{equation}\label{eq.hom-igb-equation} \Hom_{G(\mathbb{A}_f^{(p)})}(\otimes_{\ell \neq p}\pi_\ell, \mathcal{O}(\Ig_b)_{\tilde{U}_b}[1/p] \hat{\otimes} \mathbb{C}_p) \end{equation}
is the topological direct sum of these subrepresentations as we vary over all irreducible subrepresentations of $H^0(\mathcal{S}, \pi_\HT^* \mc{V})^\sm$ with the same prime-to-$p$ factors. 

\begin{remark}
    When $\rho_p$ is not maximally non-split, \cref{main.lg} and its proof suggest that, to see more of the representation \cref{eq.hom-igb-equation}, one should consider maps analogous to \cref{eq.homs} coming not just from global sections but also from more general overconvergent sections. 
\end{remark}

\begin{remark}
    One hopes to have similar statements for all Hecke eigensystems appearing in $\mathcal{O}(\Ig_b)[1/p]$, not just those attached to classical forms. 
\end{remark}

\begin{remark}
In \cref{conj.precise-coinvariants}, we also conjectured an equivalent mod $p$ admissibility. One can thus hope to have a mod $p$ local-global compatibility. However, since the representations of $G(\mathbb{Q}_p)$ in the local construction of \cref{ss.model-representations} do not typically preserve a lattice, we do not have any clear expectations for such a statement.
\end{remark}    

\begin{remark}
    One can also hope to understand the situation before passing to $\tilde{U}_b$-coinvariants by working with bounded sections over $\mathcal{M}_{b,[\mu]}$ instead of $\mathcal{M}_{b,[\mu]}'$, but the analysis will be more difficult due to the lack of any obvious reasonable finiteness conditions. The form of \cref{main.lg} suggests that, in any case, the most interesting information about $\rho_p$ will be that which is encoded in the coinvariants. 
\end{remark}

\bibliographystyle{plain}
\bibliography{references, preprints}

\begin{thebibliography}{10}

\bibitem{BreuilEmerton.RepresentationPAdiquesOrdinarsDeGl2Qp}
Christophe Breuil and Matthew Emerton.
\newblock Repr\'esentations {$p$}-adiques ordinaires de {${\rm GL}_2(\bold Q_p)$} et compatibilit\'e{} local-global.
\newblock {\em Ast\'erisque}, (331):255--315, 2010.

\bibitem{Bump.AutomorphicFormsAndRepresentations}
Daniel Bump.
\newblock {\em Automorphic forms and representations}, volume~55 of {\em Cambridge Studies in Advanced Mathematics}.
\newblock Cambridge University Press, Cambridge, 1997.

\bibitem{BuzzardGee.TheConjecturalConnectionsBetweenAutomorphicRepresentationsAndGaloisRepresentations}
Kevin Buzzard and Toby Gee.
\newblock The conjectural connections between automorphic representations and {G}alois representations.
\newblock In {\em Automorphic forms and {G}alois representations. {V}ol. 1}, volume 414 of {\em London Math. Soc. Lecture Note Ser.}, pages 135--187. Cambridge Univ. Press, Cambridge, 2014.

\bibitem{CaraianiScholze.OnTheGenericPartOfTheCohomologyOfCompactUnitaryShimuraVarieties}
Ana Caraiani and Peter Scholze.
\newblock On the generic part of the cohomology of compact unitary shimura varieties.
\newblock {\em Annals of Mathematics}, 186(3):649--766, 2017.

\bibitem{CaraianiScholze.OnTheGenericPartOfTheCohomologyOfNonCompactUnitaryShimuraVarieties}
Ana Caraiani and Peter Scholze.
\newblock On the generic part of the cohomology of non-compact unitary {S}himura varieties.
\newblock {\em Ann. of Math. (2)}, 199(2):483--590, 2024.

\bibitem{Carayol.SurLesRepresentationslAdiquesAssocieesAuxFormesModulairesDeHilbert}
Henri Carayol.
\newblock Sur les repr\'esentations {$l$}-adiques associ\'ees aux formes modulaires de {H}ilbert.
\newblock {\em Ann. Sci. \'Ecole Norm. Sup. (4)}, 19(3):409--468, 1986.

\bibitem{Casselman.OnRepresentationsOfGL2AndTheArithmeticOfModularCurves}
W.~Casselman.
\newblock On representations of {${\rm GL}_{2}$} and the arithmetic of modular curves.
\newblock pages 107--141. LNM, Vol. 349, 1973.

\bibitem{Deligne.FormesModulairesEtRepresentationsDeGL2}
P.~Deligne.
\newblock Formes modulaires et repr\'esentations de {${\rm GL}(2)$}.
\newblock pages 55--105. LNM, Vol. 349, 1973.

\bibitem{Deligne.FormesModulairesEtRepresentationslAdiques}
Pierre Deligne.
\newblock Formes modulaires et repr\'esentations {$l$}-adiques.
\newblock In {\em S\'eminaire {B}ourbaki. {V}ol. 1968/69: {E}xpos\'es 347--363}, volume 175 of {\em LNM.}, pages Exp.\ No.\ 355, 139--172. Springer, Berlin, 1971.

\bibitem{Fontaine.RepresentationsEllAdiquesPotentiellementSemiStables}
Jean-Marc Fontaine.
\newblock Repr\'esentations {$l$}-adiques potentiellement semi-stables.
\newblock Number 223, pages 321--347. 1994.
\newblock P\'eriodes $p$-adiques (Bures-sur-Yvette, 1988).

\bibitem{Gouvea.ArithmeticOfpAdicModularForms}
Fernando~Q. Gouv\^ea.
\newblock {\em Arithmetic of {$p$}-adic modular forms}, volume 1304 of {\em LNM}.
\newblock Springer-Verlag, Berlin, 1988.

\bibitem{GrahamVanHoftenHowe.FourierTheory}
Andrew Graham, Pol van Hoften, and Sean Howe.
\newblock $p$-adic {F}ourier theory in families.
\newblock {\em In preparation}.

\bibitem{GrahamVanHoftenHowe.TowardsAFourierTheoryForBanachColmezSpaces}
Andrew Graham, Pol van Hoften, and Sean Howe.
\newblock Towards a {F}ourier theory for {B}anach-{C}olmez spaces.
\newblock {\em In preparation}.

\bibitem{Hida.GaloisRepresentationsIntoGL2ZPX}
Haruzo Hida.
\newblock Galois representations into {$\mathrm{ GL}_2(\mathbb{Z}_p[[X]])$} attached to ordinary cusp forms.
\newblock {\em Invent. Math.}, 85(3):545--613, 1986.

\bibitem{Hida.OnPadicHeckeAlgebrasForGL2}
Haruzo Hida.
\newblock On {$p$}-adic {H}ecke algebras for {${\rm GL}_2$}.
\newblock In {\em Proceedings of the {I}nternational {C}ongress of {M}athematicians, {V}ol. 1, 2 ({B}erkeley, {C}alif., 1986)}, pages 434--443. Amer. Math. Soc., Providence, RI, 1987.

\bibitem{Hida.pAdicAutomorphicFormsOnShimuraVarieties}
Haruzo Hida.
\newblock {\em {$p$}-adic automorphic forms on {S}himura varieties}.
\newblock Springer Monographs in Mathematics. Springer-Verlag, New York, 2004.

\bibitem{Howe.AUnipotentCircleActionOnpAdicModularForms}
Sean Howe.
\newblock A unipotent circle action on {$p$}-adic modular forms.
\newblock {\em Trans. Amer. Math. Soc. Ser. B}, 7:186--226, 2020.

\bibitem{Howe.SlopeClassicalityInHigherColemanTheoryViaHighestWeightVectorsInCompletedCohomology}
Sean Howe.
\newblock Slope classicality in higher {C}oleman theory via highest weight vectors in completed cohomology.
\newblock {\em Proc. Natl. Acad. Sci. USA}, 119(45):Paper No. e2208249119, 3, 2022.

\bibitem{Howe.ThepAdicJacquetLanglandsCorrespondenceAndAQuestionOfSerre}
Sean Howe.
\newblock The spectral {$p$}-adic {J}acquet-{L}anglands correspondence and a question of {S}erre.
\newblock {\em Compos. Math.}, 158(2):245--286, 2022.

\bibitem{HoweKlevdal.AdmissiblePairsAndpAdicHodgeStructuresIITheBiAnalyticAxLindemannTheorem}
Sean Howe and Christian Klevdal.
\newblock Admissible pairs and $p$-adic {H}odge structures {II}: The bi-analytic {A}x-{L}indemann theorem.
\newblock {\em arXiv:2308.11064}.

\bibitem{JacquetLanglands.AutomorphicFormsOnGL2}
H.~Jacquet and R.~P. Langlands.
\newblock {\em Automorphic forms on {${\rm GL}(2)$}}.
\newblock LNM, Vol. 114. 1970.

\bibitem{Katz.HigherCongruencesBetweenModularForms}
Nicholas~M. Katz.
\newblock Higher congruences between modular forms.
\newblock {\em Ann. of Math. (2)}, 101:332--367, 1975.

\bibitem{Katz.p-adic-L-functions-via-moduli-of-elliptic-curves}
Nicholas~M. Katz.
\newblock {$p$}-adic {$L$}-functions via moduli of elliptic curves.
\newblock In {\em Algebraic geometry ({P}roc. {S}ympos. {P}ure {M}ath., {V}ol. 29, {H}umboldt {S}tate {U}niv., {A}rcata, {C}alif., 1974)}, volume Vol. 29 of {\em Proc. Sympos. Pure Math.}, pages 479--506. Amer. Math. Soc., Providence, RI, 1975.

\bibitem{KatzMazur.ArithmeticModuliOfEllipticCurves}
Nicholas~M. Katz and Barry Mazur.
\newblock {\em Arithmetic moduli of elliptic curves}, volume 108 of {\em Annals of Mathematics Studies}.
\newblock Princeton University Press, Princeton, NJ, 1985.

\bibitem{Kisin.OverconvergentModularFormsAndTheFontaineMazurConjecture}
Mark Kisin.
\newblock Overconvergent modular forms and the {F}ontaine-{M}azur conjecture.
\newblock {\em Invent. Math.}, 153(2):373--454, 2003.

\bibitem{Kottwitz.BGForAllLocalAndGlobalFields}
Robert Kottwitz.
\newblock {B(G)} for all local and global fields.
\newblock {\em ar{X}iv:1401.5728}, 2014.

\bibitem{Kottwitz.IsocrystalsWithAdditionalStructure}
Robert~E Kottwitz.
\newblock Isocrystals with additional structure.
\newblock {\em Compositio Mathematica}, 56(2):201--220, 1985.

\bibitem{Langlands.ModularFormsAndEllAdicRepresentations}
R.~P. Langlands.
\newblock Modular forms and {$\ell $}-adic representations.
\newblock In {\em Modular functions of one variable, {II} ({P}roc. {I}nternat. {S}ummer {S}chool, {U}niv. {A}ntwerp, {A}ntwerp, 1972)}, volume Vol. 349 of {\em Lecture Notes in Math.}, pages 361--500. Springer, Berlin-New York, 1973.

\bibitem{Saito.ModularFormsAndpAdicHodgeTheory}
Takeshi Saito.
\newblock Modular forms and {$p$}-adic {H}odge theory.
\newblock {\em Invent. Math.}, 129(3):607--620, 1997.

\bibitem{Scholze.pAdicGeometry}
Peter Scholze.
\newblock $p$-adic geometry.
\newblock {\em Proceedings of the ICM 2018}, 2018.

\bibitem{ScholzeWeinstein.ModuliOfpDivisibleGroups}
Peter Scholze and Jared Weinstein.
\newblock Moduli of {$p$}-divisible groups.
\newblock {\em Camb. J. Math.}, 1(2):145--237, 2013.

\bibitem{stacks-project}
The {Stacks Project Authors}.
\newblock \textit{Stacks Project}.
\newblock \url{https://stacks.math.columbia.edu}, 2018.

\bibitem{Wiles.OnOrdinaryLambdaAdicRepresentationsAssociatedToModularForms}
A.~Wiles.
\newblock On ordinary {$\lambda$}-adic representations associated to modular forms.
\newblock {\em Invent. Math.}, 94(3):529--573, 1988.

\bibitem{Zhang.APelTypeIgusaStackAndThePAdicGeometryOfShimuraVarieties}
Mingjia Zhang.
\newblock A {PEL}-type {Igusa} {Stack} and the $p$-adic geometry of {Shimura Varieties}.
\newblock {\em arXiv:2309.05152}, 2023.

\end{thebibliography}

\end{document}